\author{Erez Lapid}
\address{Department of Mathematics, Weizmann Institute of Science, Rehovot 76100 Israel}
\email{erez.m.lapid@gmail.com}
\author{Zhengyu Mao}
\address{Department of Mathematics and Computer Science, Rutgers University, Newark, NJ 07102, USA}
\email{zmao@rutgers.edu}
\title[Whittaker--Fourier coefficients on unitary groups]
{On Whittaker--Fourier coefficients of automorphic forms on unitary groups: reduction to a local identity}
\date{\today}
\thanks{Authors partially supported by U.S.--Israel Binational Science Foundation Grant \# 057/2008}
\thanks{Second named author partially supported by NSF grant DMS 1400063}
\dedicatory{In honor of Jim Cogdell for his 60th birthday}
\keywords{Whittaker coefficients, automorphic descent, automorphic forms}
\subjclass[2010]{11F30, 11F70}
\newcommand{\A}{\mathbb{A}}                            
\newcommand{\C}{\mathbb{C}}                            
\newcommand{\Q}{\mathbb{Q}}                            
\newcommand{\R}{\mathbb{R}}                            
\newcommand{\AF}{\mathcal{A}}                          
\newcommand{\bs}{\backslash}
\newcommand{\vol}{\operatorname{vol}}
\newcommand{\OO}{\mathcal{O}}                         
\newcommand{\uno}{\operatorname{U}^+_{2n+1}}
\newcommand{\une}{\operatorname{U}^-_{2n}}
\newcommand{\Mat}{\operatorname{Mat}}
\newcommand{\Ad}{\operatorname{Ad}}
\newcommand{\GL}{\operatorname{GL}}
\newcommand{\GLE}{\operatorname{Res}_{E/F}\operatorname{GL}}
\newcommand{\Sp}{\operatorname{Sp}}
\newcommand{\U}{\operatorname{U}}
\newcommand{\Ind}{\operatorname{Ind}}                   
\newcommand{\Levi}{M}
\newcommand{\GLnn}{{\mathbb M}}
\newcommand{\GLn}{{{\mathbb M}'}}
\newcommand{\bigune}{\operatorname{U}^-_{4n}}
\newcommand{\biguno}{\operatorname{U}^+_{4n+2}}
\renewcommand{\c}[1]{\mathfrak{c}(#1)}
\newcommand{\chiq}{\eta_{E/F}}
\newcommand{\chiqe}{\Upsilon}
\newcommand{\GG}{\operatorname{GG}}
\newcommand{\stint}{\int^{st}}                         
\newcommand{\eisen}{\mathcal{E}}                       
\newcommand{\reseisen}{\eisen_{-k}}                    
\newcommand{\resm}{m_{-k}}                             
\newcommand{\desc}{\mathcal{D}}
\newcommand{\des}{\mathcal{D}_\psi}                     
\newcommand{\desinv}{\mathcal{D}_{\psi^{-1}}}
\newcommand{\whitform}{A^{\psi,\chiqe}}                              
\newcommand{\whitformo}{A^{\psi}}                              
\newcommand{\whitformd}{A^{\psi^{-1},\chiqe^{-1}}}
\newcommand{\whitformod}{A^{\psi^{-1}}}
\newcommand{\Mint}{A_e^{\psi,\chiqe}}                                
\newcommand{\Mintd}{A_e^{\psi^{-1},\chiqe^{-1}}}
\newcommand{\wgt}[1]{\nu(#1)}
\renewcommand{\d}[1]{#1^{\vee}}
\newcommand{\alt}[1]{#1^{\wedge}}
\newcommand{\Irr}{\operatorname{Irr}}                  
\newcommand{\ut}{\operatorname{ut}}
\newcommand{\rest}{\big|}                              
\newcommand{\Cusp}{\operatorname{Cusp}}                
\newcommand{\Ucusp}{\operatorname{Ucusp}}
\newcommand{\gen}{\operatorname{gen}}                  
\newcommand{\As}{\operatorname{As}}
\newcommand{\levi}{\varrho}                            
\newcommand{\toU}{\ell}
\newcommand{\toUbar}{\overline{\ell}}
\newcommand{\startran}[1]{\breve{#1}}             
\newcommand{\swrz}{\mathcal{S}}
\newcommand{\modulus}{\delta}
\newcommand{\diag}{\operatorname{diag}}
\newcommand{\whit}{\mathcal{W}}                          
\newcommand{\Whit}{\mathbb{W}}                           
\newcommand{\WhitM}{\Whit^{\psi_{N_\GLnn}}}                  
\newcommand{\WhitMd}{\Whit^{\psi_{N_\GLnn}^{-1}}}
\newcommand{\WhitML}{\Whit^{\psi_{N_\Levi}}}                  
\newcommand{\WhitMLF}{\Whit^{\psi_{N_\Levi}^*}}                  
\newcommand{\WhitMLd}{\Whit^{\psi_{N_\Levi}^{-1}}}
\newcommand{\WhitGd}{\Whit^{\psi_{N'}^{-1}}}
\newcommand{\antisym}{\mathfrak{a}}
\newcommand{\psiweil}{\psi_{\circ}}
\newcommand{\Nc}{{\mathcal B}}   
\newcommand{\few}{\hat{w}}              
\newcommand{\one}{\epsilon}        
\newcommand{\tr}{\operatorname{tr}}                        
\newcommand{\Hei}[1]{{#1}_{{\mathcal H}}}
\newcommand{\sprod}[2]{\left\langle#1,#2\right\rangle}
\newcommand{\abs}[1]{\left|{#1}\right|}
\newcommand{\sm}[4]{\left(\begin{smallmatrix}{#1}&{#2}\\{#3}&{#4}\end{smallmatrix}\right)}
\newcommand{\weil}{\omega}
\newcommand{\wev}{\weil_{\psi_{N_\GLnn}}}
\newcommand{\wevinv}{\weil_{\psi_{N_\GLnn}^{-1}}}
\newcommand{\Ne}{\mathfrak{E}}
\newcommand{\FJ}{\operatorname{FJ}_{\psi_{N_\GLnn}, \chiqe}}   
\newcommand{\rr}{\tau}
\newcommand{\alltri}{{\mathcal A}}
\newcommand{\unitE}{E^u}
\newtheorem{theorem}{Theorem}[section]
\newtheorem{lemma}[theorem]{Lemma}
\newtheorem{proposition}[theorem]{Proposition}
\newtheorem{remark}[theorem]{Remark}
\newtheorem{conjecture}[theorem]{Conjecture}
\newtheorem{claim}[theorem]{``Claim''}
\newtheorem{assumption}[theorem]{Working Assumption}
\numberwithin{equation}{section}
\begin{document}

\begin{abstract}
We study Whittaker--Fourier coefficients of automorphic forms on a quasi-split unitary group.
We reduce the analogue of the Ichino--Ikeda conjectures to a conjectural local statement
using the descent method of Ginzburg--Rallis--Soudry. 
\end{abstract}

\maketitle

\setcounter{tocdepth}{1}
\tableofcontents
\section{Introduction}

In \cite{MR3267120} we studied the Whittaker--Fourier coefficients of cusp forms on adelic quotients
of quasi-split groups over number fields and formulated a conjecture relating them to the Petersson inner product.
In the case of the metaplectic groups $\widetilde{\Sp}_n$ we further reduced the global conjecture to a local conjecture
in \cite{1401.0198}. In the $p$-adic case we proved the local conjecture in \cite{1404.2905}.

In this note, we turn our attention to the case of (quasi-split) unitary groups. Let us recall the conjecture of \cite{MR3267120} in this case.
Let $E/F$ be a quadratic extension of number fields and $\A$ the ring of adeles of $F$.
Let $\U_n$ be a quasi-split unitary group and $N'$ a maximal unipotent subgroup of $\U_n$.
Fix a non-degenerate character $\psi_{N'}$ on $N'(\A)$, trivial on $N'(F)$.
For a cusp form $\varphi$ of $\U_n(F)\bs\U_n(\A)$ we consider the Whittaker--Fourier coefficient
\[
\whit(\varphi)=\whit^{\psi_{N'}}(\varphi):=
(\vol(N'(F)\bs N'(\A)))^{-1}\int_{N'(F)\bs N'(\A)} \varphi(u)\psi_{N'}(u)^{-1}\ du.
\]
If $\varphi^\vee$ is another cusp form on $\U_n(F)\bs\U_n(\A)$ we also set
\begin{equation}\label{eq: innerdef}
(\varphi, \varphi^\vee)_{\U_n}=(\vol(\U_n(F)\bs\U_n(\A)))^{-1}\int_{\U_n(F)\bs\U_n(\A)} \varphi(g) \varphi^\vee(g)\ dg.
\end{equation}

Given a finite set of places $S$ of $F$ we defined in \cite{MR3267120} a regularized integral
\[
\stint_{N'(F_S)}f(u)\ du
\]
for a suitable class of smooth functions $f$ on $N'(F_S)$. If $S$ consists only of non-archimedean places then
\[
\stint_{N'(F_S)}f(u)\ du=\int_{N_1'}f(u)\ du
\]
for any sufficiently large compact open subgroup $N_1'$ of $N'(F_S)$.
(In the archimedean case an ad-hoc definition is given.)

Let $\sigma$ be an irreducible generic cuspidal representation of $\U_n(\A)$.
By \cite[Ch. 11]{MR2848523} the weak lift $\pi$ of $\sigma$ to $\GL_n(\A_E)$ (which exists by \cite{MR2767514})
is an isobaric sum $\pi_1\boxplus\dots\boxplus\pi_k$ of pairwise inequivalent irreducible cuspidal representations $\pi_i$ of
$\GL_{n_i}(\A_E)$, $i=1,\dots,k$ (with $n_1+\dots+n_k=n$) such that $L^S(s,\pi_i,\As^{(-1)^{n-1}})$ has a pole (necessarily simple) at $s=1$ for all $i$.
Here $L^S(s,\pi_i,\As^{\pm})$ are the (partial) Asai $L$-functions of $\pi_i$.
Our convention is that $L(s,\pi,\As^+)$ is the Asai $L$-function $L(s,\pi,\As)$ as defined in \cite[\S2.3]{MR2848523}),
while $L(s,\pi,\As^-)=L(s,\pi\otimes\chiqe,\As^+)$ where $\chiqe$ is any Hecke character of $\A_E^{\times}$ whose restriction to
$\A_F^{\times}$ is the quadratic character $\chiq$ associated to the extension $E/F$.

\begin{conjecture}(\cite[Conjecture 1.2,5.1]{MR3267120}) \label{conj: global}
Assume that $\sigma$ weakly lifts to $\pi$ as above.
Then for any $ \varphi\in\sigma$ and $\d{ \varphi}\in\d{\sigma}$ and for any sufficiently large finite set $S$ of places of $F$ we have
\begin{multline} \label{eq: globalidentity}
 \whit^{\psi_{N'}}(\varphi) \whit^{\psi_{N'}^{-1}}(\d{\varphi})=2^{1-k}\frac{\prod_{j=1}^nL^S(j,\chiq^j)}{L^S(1,\pi,\As^{(-1)^n})}\times\\
(\vol(N'(\OO_S)\bs N'(F_S)))^{-1}\stint_{N'(F_S)}(\sigma(u)\varphi,\d{\varphi})_{\U_n}\psi_{N'}(u)^{-1}\ du.
\end{multline}
Here $\OO_S$ is the ring of $S$-integers of $F$.
\end{conjecture}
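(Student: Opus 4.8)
The plan is to establish Conjecture~\ref{conj: global} conditionally, reducing it to a purely local identity at the places of $S$, by the automorphic descent method of Ginzburg--Rallis--Soudry, following the template of the metaplectic case in \cite{1401.0198, 1404.2905}. The point of departure is that, by Ginzburg--Rallis--Soudry, the generic cuspidal $\sigma$ is (for a suitable choice of $\psi_{N'}$) the descent $\des$ applied to the isobaric representation $\pi=\pi_1\boxplus\dots\boxplus\pi_k$ of $\GL_n(\A_E)$: elements of $\sigma$ are Fourier--Jacobi (or Bessel) coefficients of the residue, at the relevant point, of the Eisenstein series on a larger quasi-split unitary group induced from $\pi$ on its Siegel Levi. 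Since both sides of \eqref{eq: globalidentity} are separately bilinear in $(\varphi,\d{\varphi})$ and transform compatibly under $\U_n(\A)$, it suffices to verify the identity for $\varphi=\des(\xi)$, $\d{\varphi}=\desinv(\d{\xi})$ with $\xi,\d{\xi}$ factorizable sections in the residual space.

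First I would compute the left-hand side. Composing the Whittaker--Fourier integral with the Fourier--Jacobi coefficient defining $\des$ and unfolding via the key identity of Ginzburg--Rallis--Soudry, the product $\whit^{\psi_{N'}}(\des(\xi))\,\whit^{\psi_{N'}^{-1}}(\desinv(\d{\xi}))$ is expressed through the Whittaker functions of $\xi,\d{\xi}$ along the Siegel Levi $\GL_n(\A_E)$. For sections in the residual space of $\pi$ these factor as products of local Whittaker functions; the residue operation contributes the factor $L^S(1,\pi,\As^{(-1)^n})^{-1}$, the Fourier--Jacobi / Weil-representation part contributes $\prod_{j=1}^n L^S(j,\chiq^j)$, and there remains a product over $v\in S$ of explicit local Whittaker-type integrals of $\xi_v,\d{\xi}_v$. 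This is where the $L$-factors of \eqref{eq: globalidentity} originate; the power $2^{1-k}$ enters here as well, recording the $k$ cuspidal summands of $\pi$ through the structure of the residual datum.

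Next I would compute the right-hand side, namely $\stint_{N'(F_S)}(\sigma(u)\des(\xi),\desinv(\d{\xi}))_{\U_n}\,\psi_{N'}(u)^{-1}\ du$. One unfolds the two descent integrals against one another and applies a Rallis-type inner product formula: the resulting integral over the larger unitary group is evaluated by the doubling method, producing the \emph{same} ratio of partial $L$-functions times a product over $v\in S$ of local doubling/Whittaker zeta integrals, the unipotent integration on the $\U_n$ side now appearing as a regularized unipotent integral (the $\stint$ of \cite{MR3267120}) of a Whittaker matrix coefficient on the $\GL_n(\A_E)$ side. Dividing the two computations, all global $L$-values and the constant $2^{1-k}$ cancel, and \eqref{eq: globalidentity} becomes equivalent to an identity, place by place for $v\in S$, between the local Whittaker functional applied to the local descent module and a local regularized integral of the corresponding matrix coefficient against $\psi_{N'}$ --- the conjectural local statement announced in the title (known in the $p$-adic case in the parallel metaplectic setting by \cite{1404.2905}).

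The principal obstacle is the Rallis inner product step: making the regularized integrals rigorous in both the non-archimedean and the archimedean cases, proving absolute convergence of the unfolded global integrals after passing to the residue, and carrying out the unramified local computation so that precisely $\prod_{j=1}^n L^S(j,\chiq^j)/L^S(1,\pi,\As^{(-1)^n})$ and the power $2^{1-k}$ are produced. The bookkeeping of normalizations --- of intertwining operators, of the Weil representation, and of Haar measures --- and the handling of the archimedean places through the ad hoc definition of $\stint$ from \cite{MR3267120} are the delicate points; the formal skeleton of the argument, by contrast, runs parallel to \cite{1401.0198}.
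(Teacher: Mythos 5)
Your outline coincides with the strategy the paper actually follows: realize $\sigma$ as the Ginzburg--Rallis--Soudry descent of $\pi$ from the Siegel--Levi of $\U^-_{4n}$ (Fourier--Jacobi coefficients of the residual Eisenstein series) or of $\U^+_{4n+2}$ (Gelfand--Graev coefficients), factorize the Whittaker coefficient of the descent into local terms $\whitform(M(\frac12)W_v,\Phi_v,\cdot,-\frac12)$, compute the pairing of a cusp form against the descent by the GRS unfolding into local Shimura-type integrals $J_v$, and match the two via a local identity at each $v\in S$. Two corrections are in order, however. First, the inner-product step is \emph{not} a Rallis inner product/doubling computation; it is the GRS unfolding of $(\varphi',\FJ(\eisen(\varphi,s),\Phi))_{G'}$ (resp.\ $(\varphi',\GG(\eisen(\varphi,s)))_{G'}$) into the local integrals $J_v$, and its use requires analytic properties of $J_v$ (convergence, meromorphic continuation, nonvanishing, the unramified evaluation) together with irreducibility of the descent, which in the unitary case are not yet in the literature and enter only as Working Assumptions; your proposal treats them as available.

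Second, and more importantly, the global $L$-values and the factor $2^{1-k}$ do not simply ``cancel'' leaving a known local identity: what remains after the comparison is the identity of the conjecture multiplied by $\prod_{v\in S}c_{\pi_v}^{-1}$, where $c_{\pi_v}$ is the proportionality constant in the local identity
$\stint_{N'}J(\sigma'(u)W',W,\Phi,\frac12)\psi_{N'}(u)\,du=c_\pi W'(e)\whitform(M(\frac12)W,\Phi,e,-\frac12)$
(and its odd analogue), normalized by compatible measures on $G'$ and $U$. The statement you are asked about is a \emph{conjecture}, and the paper does not prove it: it proves only the conditional reduction (Theorems \ref{unethm: local to global} and \ref{unothm: local to global}) and then formulates the evaluation $c_\pi=\omega'_\pi(-1)$ as the local conjecture, supported only by heuristic (formal, non-rigorous) computations for $\U_2$ and $\U_3$. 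So your proposal, read as a proof of \eqref{eq: globalidentity}, has a genuine gap exactly at its last step: the ``purely local identity'' you defer to is open (the metaplectic result of \cite{1404.2905} does not apply here), and without it --- and without the unproved properties of the unitary descent and of $J_v$ --- the argument yields at best the same conditional reduction as the paper, not the conjecture itself.
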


We will follow the treatment of \cite{1401.0198} to reduce the above conjecture to a conjectural local identity.
We will also give a heuristic argument for the conjectural local identity for the cases $n=2$ or $3$.
The reduction to a local identity is based on the work of Ginzburg--Rallis--Soudry on automorphic descent.
The descent construction for cuspidal representations of $\U_n$ depends on the parity of $n$,
so we have to treat the cases $n$ even and $n$ odd separately.
At the moment, the descent theory is more thoroughly developed in the case of metaplectic groups than in the case of unitary groups
and there are some expected properties of the descent which are not yet established in the latter case.
Although it is likely that the same methods work, we will not concern ourselves with bridging these gaps here.
Instead, we will take for granted the expected properties of the descent for unitary groups.
Thus, our results are conditional.

Finally, we mention that the putative local identity is expected to be equivalent to the formal degree conjecture
of Hiraga--Ichino--Ikeda \cite{MR2350057} in the case of generic square-integrable representations. (See \cite{1404.2909} for the case
of odd orthogonal and metaplectic groups where the formal degree conjecture is established using this approach.)

\nocite{MR3267124}

\subsection{Acknowledgement}
It is a pleasure to dedicate this paper to Jim Cogdell.
Jim has been an inspirational figure in automorphic forms.
On a personal level, Jim has always been very supportive.
The second named author is especially grateful to Jim for being his postdoctoral mentor in the early 1990s.

We also thank the anonymous referee for carefully reading the paper.


\section{Notation and preliminaries}

Let $F$ be a local field of characteristic zero.
By abuse of notation we will use the same letter for an algebraic group over $F$ and its group of $F$-points.
We denote by $\Irr Q$ the set of (equivalence classes of) smooth complex irreducible representations of the group of $F$-points of
an algebraic group $Q$ over $F$. We also write $\modulus_Q$ for the modulus function of $Q$.
If $Q$ is quasi-split and $\psi_{N_Q}$ is a non-degenerate character of a maximal unipotent subgroup $N_Q$,
we denote by $\Irr_{\gen,\psi_{N_Q}} Q$ the subset of representations that are generic with respect to $\psi_{N_Q}$.
We suppress $\psi_{N_Q}$ from the notation if it is clear from the context or is irrelevant.
When $\pi\in \Irr_{\gen, \psi_{N_Q}} Q$, let $\Whit^{\psi_{N_Q}}(\pi)$ be the Whittaker model of $\pi$.

Let $E$ be a quadratic \'etale algebra over $F$ and $\c{\cdot}$ the nontrivial $F$-automorphism of $E$.
We denote by $\abs{\cdot}$ the normalized absolute value of $E$.
Let $\chiq$ be the corresponding quadratic character of $F^\times$. (It is trivial if $E/F$ is split.)

Let $\Mat_{l,m}$ be the space of $l\times m$ matrices. Let $\one_{i,j}^m$ denote the $m\times m$ matrix $x$ such that $x_{i,j}=1$ and $x_{k,l}=0$ for all other entries.
Let $w_l$ be the $l\times l$-matrix $\sum_{i=1}^l \one_{i,l+1-i}^l$.
Let $g\mapsto g^*$ be the outer automorphism of $\GL_l(E)$ given by $g^*=w_l^{-1}\,\c{\,^tg^{-1} }w_l$.

Let $J_m^\pm=\sm{}{w_m}{\pm w_m}{}$ and $\U_{2m}^\pm=\{g\in\GL_{2m}(E):\, \c{\,^tg}J_{m}^\pm g=J_{m}^\pm\}$,
the quasi-split unitary group, acting on the $2m$-dimensional hermitian/skew-hermitian space with standard basis $e_1,\ldots,e_{m},e_{-m},\ldots,e_{-1}$.
If $E/F$ is split, $\U_{2m}(F)\simeq \GL_{2m}(F)$.
Let $P=\Levi\ltimes U$ be the Siegel parabolic subgroup of $\U_{2m}^\pm$ with Levi part $\Levi=\levi(\GLnn)$
where $\GLnn=\GLE_{m}$ and $\levi: h\mapsto \diag(h,h^*)$.
Let $K=K_{\GL_{2m}(E)}\cap\U_{2m}^\pm$ where $K_{\GL_{2m}(E)}$ is the standard maximal compact subgroup of $\GL_{2m}(E)$.
Thus $K$ is a maximal compact subgroup of $\U_{2m}^\pm$. 
Using the Iwasawa decomposition we extend the character $\levi(g)\mapsto\abs{\det g}$, $g\in\GLnn$ to a right
$K$ left $U$ invariant function $\wgt{g}$ on $U_{2m}^\pm$.

For the rest of the paper we will consider either $G=\U_{4n}^-$ or $\U_{4n+2}^+$ (so that $m=2n$ or $2n+1$ respectively).
For any $f\in C^\infty(G)$ and $s\in\C$ define $f_s(g)=f(g)\wgt{g}^s$, $g\in G$.

Let $N_\Levi$ be the standard maximal unipotent subgroup of $\Levi$ and $\psi_{N_\Levi}$ a non-degenerate
character of $N_\Levi$.
Let $\pi$ be an irreducible generic representation of $\Levi$ with Whittaker model $\WhitML(\pi)$.
Let $\Ind(\WhitML(\pi))$ be the space of $G$-smooth left $U$-invariant functions $W:G\rightarrow\C$ such that
for all $g\in G$, the function $m\mapsto\modulus_P(m)^{-\frac12}W(mg)$ on $\Levi$ belongs to $\WhitML(\pi)$.
For any $s\in\C$ we have a representation $\Ind(\WhitML(\pi),s)$ on the space $\Ind(\WhitML(\pi))$ given by
$(I(s,g)W)_s(x)=W_s(xg)$, $x,g\in G$.

Let $w_U=\sm{}{I_m}{\pm I_m}{}\in G$ (where the sign $\pm$ is $(-1)^{m+1}$).
Define the intertwining operator $M(\pi,s)=M(s):\Ind(\WhitML(\pi),s)\rightarrow\Ind(\WhitMLF(\d\pi),-s)$ by
(the analytic continuation of)
\begin{equation} \label{eq: defM}
M(s)W(g)=\wgt{g}^s\int_U W_s( w_U ug)\,du
\end{equation}
where $\psi_{N_\Levi}^*(u)=\psi_{N_\Levi}(w_Uuw_U^{-1})$ and $\d\pi$ is the contragredient of $\pi$.

In the case where $F$ is $p$-adic with $p$ odd, $E/F$ and $\pi$ are unramified, and there exist (necessarily unique)
$K$-fixed elements $W^\circ\in\Ind(\WhitML(\pi))$ and $W'^\circ\in\Ind(\WhitMLF(\d\pi))$ such that $W^\circ(e)=W'^\circ(e)=1$
then we have (assuming $\vol(U\cap K)=1$)
\begin{equation} \label{eq: unramwhit}
M(s)W^\circ=\frac{L(2s,\pi,\As^+)}{L(2s+1,\pi,\As^+)}W'^\circ.
\end{equation}

The following result is an analogue of \cite[Proposition 4.1]{1401.0198}.
The proof is almost identical, and will be omitted.
\begin{proposition} \label{prop: M1/2}
Suppose that $\pi\in\Irr_{\gen}\GL_m(E)$ is such that $\d\pi\cong \c{\pi}$. Then $M(\pi,s)$ is holomorphic at $s=\frac12$.
\end{proposition}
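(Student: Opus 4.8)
The plan is to adapt the proof of \cite[Proposition 4.1]{1401.0198}, reducing to the case in which $\pi$ is tempered. As $\pi$ is generic it is the (automatically irreducible) parabolic induction of its Langlands data, and the hypothesis $\d\pi\cong\c\pi$ forces that data to be invariant under $(\tau,a)\mapsto(\c{\d\tau},-a)$; consequently
\[
\pi\cong\tau_1\abs{\det}^{a_1}\times\dots\times\tau_k\abs{\det}^{a_k}\times\delta\times\c{\d{\tau_k}}\abs{\det}^{-a_k}\times\dots\times\c{\d{\tau_1}}\abs{\det}^{-a_1},
\]
where $\tau_i\in\Irr_{\temp}\GL_{d_i}(E)$, $a_1\ge\dots\ge a_k>0$, and $\delta\in\Irr_{\temp}\GL_e(E)$ with $\d\delta\cong\c\delta$ (so that $m=2\sum_id_i+e$). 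It is the pairing of the ``positive'' factors $\tau_i\abs{\det}^{a_i}$ with the ``negative'' factors $\c{\d{\tau_i}}\abs{\det}^{-a_i}$, rather than conjugate-self-duality for its own sake, that the argument exploits.

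Next I would apply multiplicativity of intertwining operators relative to the parabolic $Q\subseteq P$ of $G$ whose Levi is the general linear group $\GL_{d_1}(E)\times\dots\times\GL_{d_k}(E)\times\GL_e(E)\times\GL_{d_k}(E)\times\dots\times\GL_{d_1}(E)$ sitting inside the Siegel Levi $\GLnn$, writing $M(\pi,s)$ as a composition of rank-one intertwining operators. These are of two types: Rankin--Selberg operators attached to pairs of $\GL$-blocks, and operators of the shape \eqref{eq: defM} for unitary groups of smaller rank, with inducing datum an $\abs{\det}^s$-twist of one of $\tau_i$, $\c{\d{\tau_i}}$, or $\delta$. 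One then checks directly that the Rankin--Selberg operators are holomorphic at $s=\tfrac12$ — this uses that $\pi$ is irreducible, so that no two of its Langlands segments are linked and no shift that occurs becomes a pole at $s=\tfrac12$ — and that the operators attached to $\tau_i\abs{\det}^{a_i+s}$ and to $\delta\abs{\det}^s$ are, at $s=\tfrac12$, given by absolutely convergent intertwining integrals, since $\tau_i$ and $\delta$ are tempered and the relevant parameters $a_i+\tfrac12$ and $\tfrac12$ are positive. In particular $M(\delta,\tfrac12)$ is holomorphic, which is the case $k=0$ of the proposition.

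What remains are the operators attached to $\c{\d{\tau_i}}\abs{\det}^{-a_i+s}$, evaluated at $s=\tfrac12$ at the parameter $-a_i+\tfrac12$; when $a_i\ge\tfrac12$ this is non-positive, the intertwining integral no longer converges there, and this is the only place at which a pole could appear. I would control it through the functional equation relating this operator to the companion operator attached to $\tau_i\abs{\det}^{a_i-s}$, whose composition with it is the Plancherel scalar, a ratio of Asai $\gamma$-factors whose precise form is pinned down by the unramified computation \eqref{eq: unramwhit}. Since $a_i-\tfrac12\ge0$, the companion operator is holomorphic at $s=\tfrac12$, its possible failure to be invertible being governed by the reducibility points of the corresponding Siegel-induced representation of a smaller unitary group; the irreducibility of $\pi$ already excludes $a_i=\tfrac12$ when $\tau_i$ is conjugate-self-dual — otherwise $\tau_i\abs{\det}^{1/2}$ and $\c{\d{\tau_i}}\abs{\det}^{-1/2}$ would be linked — and, more generally, keeps the parameter off the singular locus except in configurations where the residue of the $\c{\d{\tau_i}}\abs{\det}^{-a_i+s}$-operator lies in the kernel of the operators that follow it in the composition. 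Assembling these cancellations with the holomorphy obtained in the previous step yields the proposition. This last step — the analysis of the ``backward'' operators at non-positive parameter and the verification of the residue cancellations — is the one I expect to be the main obstacle, since it requires both precise reducibility information for Siegel-induced representations of unitary groups and a judicious choice of reduced decomposition of the long Weyl element; the tempered case, the Rankin--Selberg factors, and the reduction itself are routine by comparison.
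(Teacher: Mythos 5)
Your framework — write $\pi$ as the full induced from its Langlands data, use the pairing of the data forced by $\d\pi\cong\c{\pi}$, decompose $M(\pi,s)$ by multiplicativity into rank-one factors, handle the factors with positive parameter by absolute convergence and the $\GL$-factors by irreducibility/unlinkedness — is the same general strategy as the argument the paper appeals to (it omits the proof, citing \cite[Proposition 4.1]{1401.0198}, which likewise proceeds through the (essentially) square-integrable blocks, multiplicativity, and bookkeeping of the normalizing $L$-ratios). But your write-up stops short of a proof at exactly the decisive point, and you say so yourself: the factors evaluated at non-positive parameter. The functional-equation/Plancherel identity you invoke does not by itself give holomorphy of the ``backward'' operator attached to $\c{\d{\tau_i}}\abs{\det}^{-a_i+s}$: the identity $N(-u)N(u)=1$ (or $M_{\bar P|P}M_{P|\bar P}=\mu^{-1}$) only controls that operator where the companion operator is invertible, and the companion fails to be invertible precisely at reducibility points of the Siegel-induced representation of the smaller unitary group. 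Those reducibility points are governed by the local Asai factors of $\tau_i$ alone and are \emph{not} excluded by irreducibility of $\pi$: for instance $\tau_i$ conjugate-self-dual square-integrable with $a_i=1$ puts the companion at the reducibility point $\tfrac12$, while $\tau_i\abs{\det}^{1}$ and $\c{\d{\tau_i}}\abs{\det}^{-1}$ are unlinked, so $\pi$ can perfectly well be irreducible. In such configurations one must actually verify that the polar part is killed by zeros of the adjacent normalizing factors or by the kernels of the subsequent operators in the composition; saying ``assembling these cancellations yields the proposition'' without carrying this out is the gap, and it is the heart of the statement rather than a routine verification.

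Two secondary points. First, the problematic factors are not only the Siegel-type ones: among your Rankin--Selberg factors there are also pairs of two ``negative'' blocks, whose relative parameter $1-a_i-a_j$ can be non-positive; holomorphy there needs the precise fact that rank-one $\GL$ operators with square-integrable data have poles only on the line $\Re=0$ (so you should refine your tempered blocks into discrete-series segments before quoting linkage), not just ``no shift becomes a pole at $s=\tfrac12$''. Second, the Plancherel scalar is not ``pinned down'' by the unramified computation \eqref{eq: unramwhit}; identifying it in general requires Shahidi's theory for generic data. Neither of these is fatal, but the unresolved analysis of the backward operators at non-positive parameter is.
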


\section{Representations of unitary type}

\subsection{Global setting}
Let  $F$ be a number field and $E$ a quadratic extension of $F$. Let $\A$ (resp., $\A_E$) be the ring of adeles of $F$ (resp., $E$).
Denote by $\Cusp\GL_m(E)$ the set of irreducible cuspidal automorphic representations of $\GL_m(\A_E)$ whose central character is
trivial on the positive reals (where $\R\hookrightarrow\A_{\Q}\hookrightarrow\A_E$).
We say that $\pi\in\Cusp\GL_m(E)$ is of \emph{unitary type} if
\[
\int_{\GL_m(F)\bs \GL_m(\A)^1} \varphi(h)\ dh\ne0
\]
(where $\GL_m(\A)^1=\{g\in\GL_m(\A):\abs{\det g}=1\}$)
for some $\varphi$ in the space of $\pi$. In particular, this implies that the central character of $\pi$ is trivial on $\A_F^\times$.

The following characterization is due to Flicker--Zinoviev.
\begin{proposition} (\cite{MR1344660}) \label{P: flicker}
Let $\pi\in\Cusp\GL_m(E)$.
Then $\pi$ is of unitary type if and only if $L^S(s,\pi,\As^+)$ has a pole at $s=1$.
\end{proposition}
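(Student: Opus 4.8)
The plan is to prove both implications of Proposition~\ref{P: flicker} by relating the pole of $L^S(s,\pi,\As^+)$ at $s=1$ to the non-vanishing of a period integral over $\GL_m(F)\bs\GL_m(\A)^1$. Since $\pi\in\Cusp\GL_m(E)$, the relevant period is essentially the residue at $s=1$ of an integral representation of the Asai $L$-function. First I would recall the global zeta integral for the Asai $L$-function: for $\varphi$ in the space of $\pi$ and a standard section $f_s$ of the degenerate principal series on $\GL_{2m}(\A)$ (or on the relevant quasi-split unitary group in the split case, which recovers $\GL_{2m}$), one forms the Rankin--Selberg type integral
\[
Z(s,\varphi,f_s)=\int_{\GL_m(F)\bs\GL_m(\A)^1}\varphi(h)\,E(s,f_s;\levi(h))\,dh,
\]
where $E(s,f_s;\cdot)$ is the Eisenstein series attached to the Siegel parabolic. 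By the unfolding computation (Asai, Flicker), this integral unwinds to an Eulerian product whose value is $L^S(s,\pi,\As^+)$ times local integrals and normalizing factors that are nonzero and holomorphic near $s=1$. Thus $L^S(s,\pi,\As^+)$ has a pole at $s=1$ if and only if $Z(s,\varphi,f_s)$ does for some choice of data.

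Next I would bring in the analytic behavior of the Siegel Eisenstein series $E(s,f_s;g)$: its poles in the relevant right half-plane are simple and controlled, and at the rightmost point the residue is (up to a nonzero constant) the constant function $1$, or more precisely a residual automorphic form whose restriction to $\levi(\GL_m(\A))$ is constant. Taking residues on both sides of the unfolded identity, the pole of $L^S(s,\pi,\As^+)$ at $s=1$ becomes equivalent to the non-vanishing of $\int_{\GL_m(F)\bs\GL_m(\A)^1}\varphi(h)\,dh$ for some $\varphi\in\pi$ — which is precisely the definition of $\pi$ being of unitary type. One direction ($\Rightarrow$): if the Asai $L$-function has a pole, the residue of $Z$ is nonzero, hence the period is nonzero. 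The other direction ($\Leftarrow$): if the period is nonzero, then since the residue of $E(s,f_s;\cdot)$ realizes the constant function, the residue of $Z$ is a nonzero multiple of the period, forcing a pole of $L^S(s,\pi,\As^+)$ at $s=1$.

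The main obstacle is making the residue computation of the Siegel Eisenstein series precise and matching normalizations: one must verify that the rightmost pole of $E(s,f_s;g)$ in the region under consideration is simple, locate it exactly at the point corresponding to $s=1$ of the Asai $L$-function under the chosen parametrization, and confirm that the residual form restricted to the Levi $\levi(\GL_m)$ is a nonzero constant rather than a more complicated automorphic form (this is where the cuspidality of $\pi$ and a careful analysis of the degenerate principal series enters, ruling out other contributions). A secondary point is ensuring that the finite set $S$ can be chosen large enough that all local zeta integrals at places in $S$ can be made nonzero at $s=1$ simultaneously with the global data, so that the partial $L$-function inherits the pole. Both of these are standard in the Asai/Flicker--Rallis theory, so I would cite \cite{MR1344660} for the crux and only sketch the residue bookkeeping; indeed the statement as given simply attributes the result to Flicker--Zinoviev, so the ``proof'' here is really a pointer to that argument together with the identification of the period with the unitary-type condition.
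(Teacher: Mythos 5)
The paper does not prove this proposition at all: it is quoted verbatim from Flicker--Zinoviev \cite{MR1344660}, so there is no internal argument to compare with. Judged on its own terms, your high-level plan is indeed the one used in \cite{MR1344660}: exhibit the period over $\GL_m(F)\bs\GL_m(\A)^1$ as (a nonzero multiple of) the residue at $s=1$ of a zeta integral that unfolds to the Asai $L$-function, using that the residue of the relevant Eisenstein series is the constant function, and then handle the places in $S$ by nonvanishing/holomorphy of the local integrals.

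The concrete problem is the zeta integral you wrote down. The Asai (twisted tensor) integral representation of Asai--Flicker is not an integral of $\varphi$ against a Siegel-parabolic degenerate Eisenstein series on $\GL_{2m}(\A)$ pulled back through $\levi(h)=\diag(h,h^*)$; it is
\[
Z(s,\varphi,\Phi)=\int_{\GL_m(F)\bs\GL_m(\A)^1}\varphi(h)\,E(h,\Phi,s)\,dh,
\]
where $E(h,\Phi,s)$ is the mirabolic (Epstein-type) Eisenstein series on $\GL_m$ \emph{over $F$} attached to a Schwartz--Bruhat function $\Phi$ on $\A^m$. It is this series whose unfolding against the $\GL_m(E)$-Whittaker expansion of $\varphi$ produces the Eulerian integral $\int_{N_m(\A)\bs\GL_m(\A)}W_\varphi(h)\Phi(e_mh)\abs{\det h}^s\,dh$ computing $L(s,\pi,\As^+)$, and whose simple pole at $s=1$ has residue proportional to $\widehat{\Phi}(0)$, i.e.\ a constant, which is what turns the residue of $Z$ into the period. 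For your choice of data (degenerate principal series on $\GL_{2m}$, restriction along $\levi$) the asserted unfolding ``to an Eulerian product whose value is $L^S(s,\pi,\As^+)$'' is not justified and I do not believe it holds; the orbit analysis there does not collapse to a Whittaker-type integral. Note also that the Siegel-parabolic Eisenstein series that genuinely detects the Asai pole in the setting of this paper is the series on $\U_{2m}^{\pm}$ induced from $\pi$ itself (the order of its pole at $s=\frac12$, \cite[Theorem~2.1]{MR2848523}); that is a different and substantially deeper statement than Flicker--Zinoviev, not a substitute for the mirabolic-Eisenstein-series argument. With the correct Eisenstein series inserted, the rest of your residue and local-nonvanishing bookkeeping is the standard proof.
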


\subsection{Local setting}

We say that $\pi\in \Irr \GL_m(E)$ is of unitary type if it has a nontrivial $\GL_m(F)$-invariant linear form.
In particular, the central character of $\pi$ is trivial on $F^\times$.
We write $\Irr_{\ut}\GL_m(E)$ for the set of irreducible representations of unitary type.
Clearly, if $\pi$ is of unitary type in the global setting then all its local components $\pi_v$ are of unitary type.

We recall some results on local representations of unitary type, due to Aizenbud--Gourevitch, Flicker, Jacquet--Shalika and Kable.
\begin{lemma}
\begin{enumerate}
\item \label{one} (\cite{MR1111204} -- $p$-adic case; \cite{MR2553879} -- archimedean case)
Suppose that $\pi\in\Irr_{\ut}\GL_m(E)$. Then the space of $\GL_m(F)$-invariant linear forms on $\pi$ is one-dimensional. Moreover $\d\pi\cong\c{\pi}$.
\item \label{pole} (\cite{MR2075482} -- inert case; \cite{MR618323} -- split case)
Suppose that $\pi\in\Irr\GL_m(E)$ is square integrable. Then $\pi\in\Irr_{\ut}\GL_m(E)$ if and only if $L(s,\pi,\As^+)$ has a pole at $s=0$.
(The local $L$-function is the one defined by Shahidi -- cf.~\cite{MR1168488}.)
\item \label{ind} (same proof as \cite[Lemma~3.5]{1401.0198})
Suppose that $\pi_i\in\Irr_{\ut}\GL_{m_i}(E)$, $i=1,2$ and the parabolic induction $\pi_1\times\pi_2$ is irreducible.
Then $\pi_1\times\pi_2\in\Irr_{\ut}\GL_{m_1+m+2}(E)$.
\end{enumerate}
\end{lemma}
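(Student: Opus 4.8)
Parts \eqref{one} and \eqref{pole} are quoted directly from the works cited in their statements, so I would simply invoke those references; the part that requires an argument in this paper is \eqref{ind}, and my plan is to reproduce the reasoning of \cite[Lemma~3.5]{1401.0198}, which amounts to writing down explicitly a nonzero $\GL_m(F)$-invariant functional on $\pi_1\times\pi_2$, where $m=m_1+m_2$. First I would realize $\pi_1\times\pi_2$ as the normalized parabolic induction $\Ind_P^{\GL_m(E)}(\pi_1\boxtimes\pi_2)$, where $P=\Levi\ltimes U$ is the standard parabolic of $\GL_m(E)$ of type $(m_1,m_2)$ with $\Levi=\GL_{m_1}(E)\times\GL_{m_2}(E)$: its space consists of the smooth functions $f\colon\GL_m(E)\to V_{\pi_1}\otimes V_{\pi_2}$ satisfying $f(pg)=\modulus_P(p)^{1/2}(\pi_1\boxtimes\pi_2)(\bar p)f(g)$, where $\bar p$ is the image of $p$ in $\Levi$ (so $U$ acts trivially). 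Let $\lambda_i$ be a nonzero $\GL_{m_i}(F)$-invariant functional on $\pi_i$, which exists by the very definition of unitary type, and let $Q$ be the standard parabolic of $\GL_m(F)$ of type $(m_1,m_2)$. Because the reference block $\langle e_1,\dots,e_{m_1}\rangle_E$ is defined over $F$, one has $P\cap\GL_m(F)=Q$, and $Q\bs\GL_m(F)$ is compact, being an $F$-form of a partial flag variety.

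The candidate functional is
\[
\ell(f)=\int_{Q\bs\GL_m(F)}(\lambda_1\otimes\lambda_2)\bigl(f(h)\bigr)\,dh ,
\]
which converges because $Q\bs\GL_m(F)$ is compact and the integrand is continuous, and is $\GL_m(F)$-invariant by right-invariance of the quotient measure. For $\ell$ to be well defined the integrand must transform under $Q$ by $\modulus_Q$; the $\GL_{m_i}(F)$-invariance of $\lambda_i$ together with the triviality of $U$ on $\pi_1\boxtimes\pi_2$ absorbs the $(\pi_1\boxtimes\pi_2)(\bar p)$ factor, and what is then required is the identity
\[
\modulus_P\big|_Q=\modulus_Q^2,\qquad\text{equivalently}\qquad\modulus_P^{1/2}\big|_Q=\modulus_Q .
\]
This is the familiar feature of a Galois symmetric pair: $\modulus_P$ evaluated on $\diag(a,b)\in Q$ equals $\abs{\det a}^{m_2}\abs{\det b}^{-m_1}$ with $\abs{\cdot}$ the normalized absolute value of $E$, and since the latter restricts on $F^\times$ to the square of the normalized absolute value of $F$, the asserted relation drops out. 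Finally $\ell\neq 0$: one may choose $f$ supported close to the identity coset of $P\bs\GL_m(E)$ with $f(e)$ a vector on which $\lambda_1\otimes\lambda_2$ does not vanish, and then $\ell(f)\neq0$. Since $\pi_1\times\pi_2$ is assumed irreducible, the nonzero form $\ell$ shows $\pi_1\times\pi_2\in\Irr_\ut\GL_m(E)$.

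The computation is uniform in the inert and split cases, the modulus identity holding for the same reason when $E=F\times F$ and $\GL_m(F)$ is embedded diagonally. I expect the only genuine work to be the verification of $\modulus_P^{1/2}|_Q=\modulus_Q$ and, in the archimedean case, the continuity of $\ell$ on the smooth Fr\'echet induced representation; but the latter is immediate from compactness of $Q\bs\GL_m(F)$ together with the continuity of the evaluation $f\mapsto f(h)$ and of $\lambda_1\otimes\lambda_2$. As all of this is the specialization of \cite[Lemma~3.5]{1401.0198} to the pair $(\GL_m(E),\GL_m(F))$, in the write-up I would content myself with citing that lemma and noting that its proof applies verbatim.
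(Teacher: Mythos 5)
Your proposal is correct and matches the paper's treatment: the paper itself gives no argument, quoting parts (1) and (2) from the cited references and referring part (3) to the proof of \cite[Lemma~3.5]{1401.0198}, which is exactly the closed-orbit construction you spell out. Your verification is the right one for the Galois pair $(\GL_m(E),\GL_m(F))$: the $\GL_m(F)$-orbit of the identity coset in $P\bs\GL_m(E)$ is closed with compact quotient $Q\bs\GL_m(F)$, and the only point needing a computation is the modulus identity $\modulus_P^{1/2}\rest_Q=\modulus_Q$ (valid in both the inert and split cases since $\abs{\cdot}_E\rest_{F^\times}=\abs{\cdot}_F^2$), which you check correctly.
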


For completeness we also recall the following classification theorem, due to Matringe, of the set $\Irr_{\gen,\ut}\GL_m(E)$ of generic representations of unitary type.

\begin{proposition} (\cite[Theorem 5.2]{MR2755483})
Assume that $F$ is $p$-adic and $E/F$ is inert.
Then the set $\Irr_{\gen,\ut}\GL_m(E)$ consists of the irreducible representations of the form
\[
\pi=\c{\sigma_1}\times\d\sigma_1\times\dots\times\c{\sigma_k}\times\d\sigma_k\times
\tau_1\times\dots\times\tau_l
\]
where $\sigma_1,\dots,\sigma_k$ are essentially square-integrable,
$\tau_1,\dots,\tau_l$ are square-integrable of unitary type (i.e., $L(0,\tau_i,\As^+)=\infty$ for all $i$).
\end{proposition}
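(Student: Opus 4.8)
The plan is to prove the two inclusions separately; the basic input in both directions is the Bernstein--Zelevinsky classification, which (for $F$ $p$-adic) presents any $\pi\in\Irr_\gen\GL_m(E)$ as an \emph{irreducible} parabolically induced representation $\delta_1\times\dots\times\delta_r$ with the $\delta_i$ essentially square-integrable, the multiset $\{\delta_i\}$ being an invariant of $\pi$. For the ``if'' direction, observe first that an irreducible parabolic induction of generic representations is generic (it is the unique generic constituent of the full induced representation), so every representation of the displayed form is generic. To see that it is of unitary type, combine two facts: each $\tau_j$ is of unitary type by \eqref{pole}; and for any $\rho\in\Irr\GL_{m'}(E)$ the representation $\rho\times\c{\d\rho}$ is of unitary type. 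The latter is the standard open-orbit computation: on the flag variety of the $(m',m')$-parabolic of $\GL_{2m'}(E)$ the group $\GL_{2m'}(F)$ has an open orbit whose stabilizer is conjugate to a twisted-diagonal copy of $\GL_{m'}(E)$, and the matrix-coefficient pairing of $\rho$ against $\c{\d\rho}\cong(\c\rho)^\vee$ furnishes the invariant functional. Feeding these facts into \eqref{ind}, and using that the product in question is irreducible, then shows by induction on the number of blocks that the whole representation is of unitary type.

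For the ``only if'' direction, take $\pi=\delta_1\times\dots\times\delta_r\in\Irr_{\gen,\ut}\GL_m(E)$. By \eqref{one} we have $\d\pi\cong\c\pi$, hence $\pi\cong\c{\d\pi}\cong\c{\d{\delta_1}}\times\dots\times\c{\d{\delta_r}}$; since the multiset $\{\delta_i\}$ is an invariant of the representation, the involution $\delta\mapsto\c{\d\delta}$ permutes $\{\delta_1,\dots,\delta_r\}$. Its two-cycles contribute blocks of the form $\c{\d\delta}\times\delta$, namely the factors $\c{\sigma_j}\times\d{\sigma_j}$ upon setting $\sigma_j=\d\delta$. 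For a fixed point $\delta\cong\c{\d\delta}$, write $\delta=\delta_0\otimes\abs{\det}^s$ with $\delta_0$ square-integrable and $s\in\R$; comparing the real part of the central exponent on the two sides of $\delta\cong\c{\d\delta}$ forces $s=0$, so $\delta$ is square-integrable and conjugate-self-dual. It remains to show that each such fixed-point factor $\delta$ is \emph{of unitary type}, equivalently (by \eqref{pole}) that $L(s,\delta,\As^+)$ rather than $L(s,\delta,\As^-)$ has a pole at $s=0$.

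This last step is the crux and the main obstacle. One possible route is via Shahidi's theory: use the factorization $L(s,\pi,\As^+)=\prod_i L(s,\delta_i,\As^+)\cdot\prod_{i<j}L(s,\delta_i\times\c{\delta_j})$ together with a local analogue of the Flicker--Zinoviev criterion (unitary type being equivalent to a pole of the Asai $L$-function), noting that a conjugate-self-dual $\delta_i$ ``of the wrong parity'' would instead produce a pole of $L(s,\pi,\As^-)$ and that for irreducible $\pi$ the two cannot occur simultaneously. Matringe's own argument is geometric: one analyzes the restriction of $\delta_1\times\dots\times\delta_r$ to $\GL_m(F)$ through the Bernstein--Zelevinsky filtration attached to the $\GL_m(F)$-orbits on the relevant flag variety, and runs a non-vanishing/linear-independence argument showing that a $\GL_m(F)$-invariant functional forces each conjugate-self-dual factor to be $\GL(F)$-distinguished rather than $\chiqe$-distinguished. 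Either way, the delicate point -- that ``wrong-type'' square-integrable factors cannot conspire, even in pairs, to produce a genuine $\GL_m(F)$-period -- is the technical heart of the argument.
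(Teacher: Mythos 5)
The paper offers no proof of this proposition: it is quoted as is from Matringe \cite[Theorem 5.2]{MR2755483}, so the only meaningful benchmark is Matringe's own argument, which is precisely the ``geometric'' route you allude to at the end but do not carry out. As a standalone proof your proposal has a genuine gap, and you acknowledge it yourself. In the ``only if'' direction, after using $\d\pi\cong\c{\pi}$ and the uniqueness of the discrete support to make the involution $\delta\mapsto\c{\d{\delta}}$ permute the essentially square-integrable factors, the entire content of the theorem is the remaining step: that each fixed point $\delta\cong\c{\d{\delta}}$ (which you correctly reduce to a unitary square-integrable, conjugate-self-dual $\delta$) must actually be of unitary type, i.e.\ have a pole of $L(s,\delta,\As^+)$ at $s=0$, and, more delicately, that factors of the ``wrong'' (Asai$^-$) type cannot conspire --- in pairs, or through contributions of non-open orbits in the Mackey/Bernstein--Zelevinsky filtration of $\pi\rest_{\GL_m(F)}$ --- to produce a $\GL_m(F)$-invariant functional on the full induced representation. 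Neither of your suggested routes closes this. The $L$-function route presupposes a local Flicker--Zinoviev criterion (distinction $\Leftrightarrow$ Asai pole) for arbitrary generic irreducible representations; what is actually available (part (2) of the Lemma) covers only square-integrable representations, and once the inducing data is a product the cross terms $L(s,\delta_i\times\c{\delta_j})$ already create poles, so a pole of $L(s,\pi,\As^+)$ at $s=0$ no longer detects distinction; likewise the assertion that a generic irreducible $\pi$ cannot be simultaneously distinguished and $\chiq$-distinguished is itself a nontrivial theorem you would need to prove. The second route (``Matringe's own argument is geometric\dots'') is a citation of the result being proved, not a proof.

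Two smaller points. In the ``if'' direction, the open-orbit construction of an invariant functional on $\rho\times\c{\d{\rho}}$ is only a heuristic as written: the open-orbit integral of the matrix-coefficient pairing need not converge, and one must either restrict to unitary $\rho$ or invoke deformation/analytic continuation (Blanc--Delorme) to produce the functional on the irreducible induced representation; since here $\rho=\c{\sigma_j}$ is essentially square-integrable and possibly non-unitary, this needs to be said. Your use of the induction step (part (3) of the Lemma) is fine, since irreducibility of the full product forces irreducibility of all partial products, but that justification should be made explicit rather than left implicit.
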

(For the archimedean analogue, see the work of Kemarsky \cite{1408.4299}.)

When $E/F$ is split, the generic representations of unitary type are of the form $\pi_1\otimes\pi_1^\vee$
where $\pi_1$ is a generic representation of $\GL_n(F)$. The latter are classified in terms of
essentially square integrable representations.

\subsection{Linear form on induced representation} \label{sec: inflate}
Now let $\pi\in \Irr_{\ut}\GL_m(E)$ and $\ell$ a nontrivial $\GL_m(F)$-invariant linear form on the space of $\pi$.
We also consider $\pi$ as a representation of $\Levi\subset G$ via $\levi$. Let $\Pi=\Ind_P^G(\pi\nu^\frac12)$.

Consider first the case $G=\U^-_{2m}$.
The group $H=\GL_{2m}(F)\cap G$ is the symplectic group $\Sp_{m}$ of rank $m$.
We define a linear form $L$ on $\Pi$  by
\begin{equation}\label{eq: inflate}
L(\phi)=\int_{(P\cap H)\bs H} \ell(\phi(g))\ dg.
\end{equation}
This is well defined since $\modulus_{P\cap H}=\nu^{\frac12}\modulus_P^{\frac12}\rest_{P\cap H}$.
Clearly $L$ is an $H$-invariant linear form on $\Pi$.
Moreover $L(\Pi(\levi(z))\phi)=\omega_\pi(z)L(\phi)$ where $z$ is in the center of $\GLnn$ and $\omega_\pi$ is the central character of $\pi$.
Let $H'\subset G$ be the group
\[
H'=\{h\in G:h=\lambda(h)\c{h}\text{ for some scalar }\lambda(h)\}\subset G.
\]
Note that the character $\lambda$ of $H'$ takes values in $\unitE$, the group of norm $1$ elements in $E$.
Then $L(\pi(h)\phi)=\omega'_\pi(\lambda(h))L(\phi)$ for $h\in H'$ where $\omega'_\pi$ is the character of $\unitE$ given by
$\omega_\pi(x)=\omega'_\pi(x \c{x}^{-1})$ (where we recall that $\omega_\pi$ is trivial on $F^{\times}$).

Next consider the case  $G=\U^+_{2m}$.
Fix a non-zero element $\rr\in E$ such that $\c{\rr}=-\rr$ and let ${\bf r}=\diag(\rr I_m,I_m)$.
Notice that the map $g\mapsto {\bf r}^{-1}g{\bf r}$ is an isomorphism between $\U^+_{2m}$ and $\U^-_{2m}$.
Thus, if we let $H={\bf r}^{-1}\GL_{2m}(F){\bf r}\cap G$ in the expression \eqref{eq: inflate} above then the linear form on $\Pi$ defined by \eqref{eq: inflate}
satisfies $L(\pi(h)\phi)=\omega'_\pi(\lambda(h))L(\phi)$ for $h\in H'$, where $H'$ is the subgroup of $h\in G$ such that $\diag(I_m,-I_m)h=\lambda(h) \c{h}\diag(I_m,-I_m)$.

We observe that in both cases we have:
\begin{equation}\label{eq: inflate2}
L(\phi)=\int_{(P\cap H')\bs H'} \ell(\phi(g))\omega_{\pi}'(\lambda(g)^{-1})\ dg.
\end{equation}

\part{The case $\une$}

\section{Fourier--Jacobi coefficients and descent}

Let $G=\bigune$. Recall in this case $\GLnn=\GLE_{2n}$.
Let $G'\subset\bigune$ be the subgroup consisting of elements fixing $e_1,\ldots,e_n$ and $e_{-1},\ldots,e_{-n}$.
Thus $G'\simeq\U_{2n}^-$. Let $K'=G'\cap K$.
Let $\GLn=\GLE_n$ and let $M'$ be its image in $G'$ under $\levi'$ where $\levi'(g)=\diag(1_n,g,g^*,1_n)$.

\subsection{Characters} \label{sec: characters}
Let $N_\GLnn$ be the standard maximal unipotent subgroup of $\GLnn$.
We fix a non-degenerate character $\psi_{N_\GLnn}$ of $N_\GLnn$ such that
the character $\psiweil(x):=\psi_{N_\GLnn}(I_{2n}+x\one_{n,n+1}^{2n})$, $x\in E$ satisfies $\psiweil(x)=\psiweil(\c{x})$.
As in \cite{1401.0198}, the statements in the sequel will not depend on the choice of $\psi_{N_\GLnn}$ (cf.~[ibid., Remark~6.4]).

The character $\psi_{N_\GLnn}$ determines additional characters on various unipotent groups as follows:
\begin{itemize}
\item $N_\Levi:=\levi(N_\GLnn)$; $\psi_{N_\Levi}(\levi(u))=\psi_{N_\GLnn}(u)$, $u\in N_\GLnn$.

\item $N'_\GLn$ is the standard maximal unipotent subgroup of $\GL_n$; $\psi_{N'_\GLn}(u')=\psi_{N_\GLnn}(\diag(u',1_n))$.

\item $\psi_{N'_{\Levi'}}$ is the non-degenerate character of $N'_{\Levi'}:=\levi'(N_\GLn)$ such that $\psi_{N'_{\Levi'}}(\levi'(u))=\psi_{N'_{\GLn}}(u)$.

\item $U'$ be the Siegel unipotent subgroup of $G'$; $\psi_{U'}$ is the character on $U'$ given by $\psi_{U'}(u)=\psiweil(\frac12 u_{2n,2n+1})^{-1}$.

\item $N'=N'_{\Levi'}\ltimes U'$; $\psi_{N'}(uv)=\psi_{N'_{\Levi'}}(u)\psi_{U'}(v)$ with $u\in N'_{\Levi'}$ and $v\in U'$.

\item $N=N_\Levi\ltimes U$ where $U$ is the Siegel unipotent of $G$; $\psi_N$ is the \emph{degenerate} character on $N$ given by
$\psi_N(uv)=\psi_{N_\Levi}(u)$ for any $u\in N_\Levi$ and $v\in U$.

\item $V$ is the unipotent radical in $G$ of the standard parabolic subgroup with Levi $\GL_1^n\times G'$.
An element in $V$ can be written as $vu$ where $u$ fixes $e_1,\ldots e_n$, $v$ fixes $e_{n+1},e_{n+2},\ldots,e_{-1-n}$ and we set
$\psi_V(vu)=\psi_{N_\Levi}( w_U v w_U^{-1})$.
\end{itemize}

For convenience, we will fix a non-trivial character $\psi$ of $E$ satisfying $\psi(x)=\psi(\c{x})$ and set
\[
\psi_{N_\GLnn}(u)=\psi(u_{1,2}+\dots+u_{2n-1,2n}).
\]
Thus $\psiweil=\psi$. Note that this choice is different from the conventions of \cite{MR2848523}.
With this choice of $\psi_{N_\GLnn}$ we have
\begin{eqnarray*}
\psi_{N'_\GLn}(u')&=&\psi(u'_{1,2}+\dots+u'_{n-1,n})\\
\psi_V(v)&=&\psi(v_{1,2}+\dots+v_{n-1,n})^{-1}, \ \ v\in V\cap\Levi.
\end{eqnarray*}

\subsection{Weil representation}

Let $V_0\subset V$ be the unipotent radical of the standard parabolic subgroup of $G$ with Levi $\GL_1^{n-1}\times\U^-_{2n+2}$.
Then the map
\[
v\mapsto\Hei{v}:=((v_{n,n+j})_{j=1,\dots,2n},\frac14\tr_{E/F}(v_{n,3n+1}))
\]
gives an isomorphism from $V/V_0$ to a Heisenberg group $W\oplus F$ with
\[
(w_1,t_1)\cdot (w_2,t_2)=(w_1+w_2,t_1+t_2+\frac12\tr_{E/F}\sprod{w_1}{w_2})
\]
where $\sprod{\cdot}{\cdot}$ is the  $F$-bilinear symplectic form on $W\cong E^{2n}$
\[
\sprod{(x_1,\dots,x_{2n})}{(y_1,\dots,y_{2n})}=\sum_{i=1}^n  x_i\c{y_{2n+1-i}}-\sum_{i=1}^ny_i\c{x_{2n+1-i}}.
\]
Let $\widetilde{\Sp(W)}$ be the metaplectic double cover of $\Sp(W)$.
We denote by $\weil_{\psiweil}$ the Weil representation of $\widetilde{\Sp(W)}\ltimes V/V_0$ determined by the additive character $\psiweil$,
realized on $\swrz(E^n)$ where we identify $E^n$ with the first $n$-coordinates of $W$.

It is clear that $G'\subset \Sp(W)$.
Moreover it is known that the metaplectic cover of $\Sp(W)$ splits (non-canonically) over $G'$.
We choose the splitting as in \cite{MR2848523}. It depends on a choice
of a character $\chiqe$ of $E^{\times}$ such that $\chiqe\rest_{F^{\times}}=\chiq$.
We denote the restriction of the Weil representation to $G'$ (with respect to that splitting) by $\weil_{\psiweil}^{\chiqe}$.

\begin{remark}
Our $\weil_{\psiweil}$ corresponds to the definition given in \cite[(1.5)]{MR2848523} with the character $\psi(x)$ there replaced by $\psiweil(\frac12 x)$.
We find it more convenient to use this convention.
\end{remark}

We extend $\weil_{\psiweil}^{\chiqe}$ to a representation $\wev^{\chiqe}$ of $V\rtimes G'$ by setting
\begin{equation}\label{eq: weilext}
\wev^{\chiqe}(v g)\Phi=\psi_V(v)\weil_{\psiweil}^{\chiqe}(\Hei{v})(\weil_{\psiweil}^{\chiqe}( g)\Phi),\,\,v\in V,\,\,g\in G'.
\end{equation}

\subsection{Fourier--Jacobi coefficient}
Suppose now that $E$ is a quadratic extension of a number field $F$.
Let $\chiq$ be the associated quadratic character on $\A_F^{\times}$ and $\chiqe$ an extension of it to $E^{\times}\bs \A_E^{\times}$.
All the previous notation has an obvious meaning in the global context. 

For $\Phi\in\swrz(\A_E^{n})$ define the theta function
\[
\Theta^\Phi_{\psi_{N_\GLnn},\chiqe}(v  g)=\sum_{\xi\in E^{n}}\wev^{\chiqe}(v g)\Phi(\xi),\ \ v\in V(\A), g\in G'(\A).
\]

For any automorphic form $\varphi$ on $G(\A)$ and $\Phi\in\swrz(\A_E^{n})$, let
$\FJ(\varphi,\Phi)$ be the Fourier--Jacobi coefficient (a function on $G'(F)\bs G'(\A)$)
\begin{equation} \label{def: FJ}
\FJ(\varphi,\Phi)(g)=\int_{V(F)\bs V(\A)}\varphi(vg)\Theta^\Phi_{\psi_{N_\GLnn}^{-1},\chiqe^{-1}}(vg)\,dv,\ \ g\in G'(\A).
\end{equation}

\subsection{Descent map}\label{unesec: desc}

We let $\Ucusp \GLnn$ be the set of  automorphic representations $\pi$ of the form $\pi_1\times\ldots\times\pi_k$, where
$\pi_i\in\Cusp\GL_{n_i}(E)$, $i=1,\dots,k$ are pairwise inequivalent and of unitary type (with $n_1+\dots+n_k=2n)$.
In particular $\d{\pi_i}\cong \c{\pi_i}$ and the central character of $\pi$ is trivial on $\A^{\times}_F$.
Note also that if $\pi\in \Ucusp \GLnn$, then $\pi\otimes\chiqe\not\in \Ucusp\GLnn$.

The automorphic representation $\pi$ is realized on the space of Eisenstein series induced from $\pi_1\otimes\dots\otimes\pi_k$.

For $\pi\in \Ucusp\GLnn$, we view it as a representation of $\Levi(\A)$ via $\levi$.
Let $\AF(\pi)$ be the space of functions $\varphi:\Levi(F)U(\A)\bs G(\A)\rightarrow\C$
such that $m\mapsto\modulus_P(m)^{-\frac12}\varphi(mg)$, $m\in\Levi(\A)$ belongs to the space of $\pi$ for all $g\in G(\A)$.
(Here $\modulus_P$ is the modulus function on the Siegel parabolic $P=M\ltimes U$).
One can associate a space of Eisenstein series $\{\eisen(\varphi,s)\}$ (where $\phi\in\AF(\pi)$) on $G$, where
$\eisen(\varphi,s)$ has a pole of order $k$ at $s=\frac12$ (see \cite[Theorem~2.1]{MR2848523}).
Set $\reseisen\varphi=\lim_{s\rightarrow\frac12}(s-\frac12)^k\eisen(\varphi,s)$.
The descent of $\pi$ is the space $\desc_{\psi}^{\chiqe}(\pi)$ generated by $\FJ(\reseisen\varphi,\Phi)$, $\varphi\in\AF(\pi)$, $\Phi\in\swrz(\A_E^n)$.

Note that the descent map depends on the choice of $\chiqe$.
By \cite[Theorem~9.7]{MR2848523}, $\desc_{\psi }^{\chiqe}(\pi)$ is a nontrivial cuspidal automorphic representation of $G'$.
It is known that $\desc_{\psi }^{\chiqe}(\pi)$ is multiplicity free (\cite[Theorem~3.1]{MR2848523}).
We expect it to be irreducible. This would follow from the analogue of \cite[Theorem~5.3]{MR1983781} in the unitary group case.
It is likely that the methods of [loc.~cit.] extend to the case at hand.
However, since this is beyond the scope of the current paper we will simply make it a working assumption.

\begin{assumption}\label{uneassume}
When $\pi \in\Ucusp\GLnn$,  $\desc_{\psi }^{\chiqe}(\pi)$ is irreducible.
\end{assumption}
It then follows from \cite[Theorem~11.2]{MR2848523} that
\begin{proposition}\label{prop: ueven}
With the above assumption, $\pi\mapsto \sigma=\desc_{\psi }^{\chiqe}(\pi)$ defines a bijection between $\Ucusp\GLnn$ and
$\Cusp_{\psi_{N'}} G'$, the set of $\psi_{N'}$-generic irreducible cuspidal automorphic representations of $G'$.
Moreover $\d\sigma$ weakly lifts to $\c{\pi}\otimes\chiqe$.
\end{proposition}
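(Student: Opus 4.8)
The plan is to deduce both assertions from the cited structural results of \cite{MR2848523}, with Working Assumption~\ref{uneassume} supplying the irreducibility that makes the descent map well behaved on individual representations. First I would recall from \cite[Theorem~11.2]{MR2848523} that the descent construction and the Langlands--Shahidi/functorial lift are mutually inverse up to the appropriate twist: given $\pi = \pi_1\times\cdots\times\pi_k \in \Ucusp\GLnn$, the residual Eisenstein series $\reseisen\varphi$ is nonzero (order of pole exactly $k$, by \cite[Theorem~2.1]{MR2848523}), its Fourier--Jacobi coefficient $\FJ(\reseisen\varphi,\Phi)$ is a nonzero cusp form on $G'(\A)$ generating $\desc_\psi^{\chiqe}(\pi)$ by \cite[Theorem~9.7]{MR2848523}, and this cuspidal representation is $\psi_{N'}$-generic and multiplicity free by \cite[Theorem~3.1]{MR2848523}. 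Under Assumption~\ref{uneassume} it is irreducible, so $\pi\mapsto\sigma:=\desc_\psi^{\chiqe}(\pi)$ is a genuine map $\Ucusp\GLnn\to\Cusp_{\psi_{N'}}G'$.

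Next I would establish injectivity and surjectivity. For surjectivity, take any $\sigma\in\Cusp_{\psi_{N'}}G'$; by \cite[Ch.~11]{MR2848523} together with \cite{MR2767514} its weak lift to $\GL_{2n}(\A_E)$ is an isobaric sum $\pi_1\boxplus\cdots\boxplus\pi_k$ of pairwise inequivalent cuspidal representations $\pi_i$ of unitary type (the Asai $L$-function poles of \cite{MR3267120}, Proposition~\ref{P: flicker}), and \cite[Theorem~11.2]{MR2848523} identifies the descent of the corresponding $\pi = \pi_1\times\cdots\times\pi_k$ with $\sigma$ itself (using irreducibility of the descent from Assumption~\ref{uneassume} to match it with the whole of $\sigma$ rather than a summand). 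For injectivity, if $\desc_\psi^{\chiqe}(\pi)\cong\desc_\psi^{\chiqe}(\pi')$ then they have the same weak lift to $\GL_{2n}(\A_E)$; since the lift of the descent recovers $\c\pi\otimes\chiqe$ (the content of the ``moreover'' clause, which I address next), strong multiplicity one for isobaric representations of $\GL_{2n}(\A_E)$ forces $\c\pi\otimes\chiqe\cong\c{\pi'}\otimes\chiqe$, hence $\pi\cong\pi'$.

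For the final assertion, that $\d\sigma$ weakly lifts to $\c\pi\otimes\chiqe$: I would argue that $\desc_\psi^{\chiqe}(\pi)$ is, by its construction as a Fourier--Jacobi coefficient of a residue of Eisenstein series induced from $\pi$, the representation whose functorial transfer is computed in \cite[Theorem~11.2]{MR2848523} to be $\pi$ itself; applying the contragredient and using that on $G' = \U_{2n}^-$ one has $\d\sigma$ corresponding to $\sigma$ composed with the nontrivial automorphism, whose lift to $\GL_{2n}(\A_E)$ is $\c{(\cdot)}$ twisted by $\chiqe$ exactly because of the $\chiqe$-dependence of the Weil-representation splitting fixed in \S\ref{unesec: desc}. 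Concretely, $\widetilde\sigma$ lifts to $\pi$, so $\d{\widetilde\sigma}$ lifts to $\d\pi\cong\c\pi$, and tracking the $\chiqe$ through the descent (equivalently, replacing $\chiqe$ by $\chiqe^{-1}$ and $\psi$ by $\psi^{-1}$ in \eqref{def: FJ}) twists this by $\chiqe$, giving $\c\pi\otimes\chiqe$; note this is consistent with the observation in \S\ref{unesec: desc} that $\pi\otimes\chiqe\notin\Ucusp\GLnn$, so $\d\sigma$ is genuinely a different object than $\sigma$ at the level of lifts.

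The main obstacle is that all of this rests on importing \cite[Theorem~11.2]{MR2848523} as a black box, and strictly speaking that theorem (and the supporting results on the order of the pole, genericity, and the precise form of the lift of the descent) is stated in \cite{MR2848523} under hypotheses and with a normalization of the Weil representation that differ slightly from ours; so the genuinely delicate step is bookkeeping — matching the character $\psiweil(\tfrac12 x)$ versus $\psi(x)$ conventions of the Remark following the Weil representation discussion, and confirming that the $\chiqe$-twist is tracked correctly through $\FJ$ and its contragredient — rather than any new geometric input. As noted in the introduction, some expected properties of the descent for unitary groups are not yet fully established in the literature, so this is exactly where the paper's conditionality enters, and I would flag that dependence explicitly rather than attempt to reprove it.
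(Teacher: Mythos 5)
Your proposal follows essentially the same route as the paper: the paper gives no independent argument but simply deduces the proposition from \cite[Theorem~11.2]{MR2848523} together with Working Assumption~\ref{uneassume}, which is exactly the black-box-plus-bookkeeping strategy you describe. One small caution on your surjectivity step: for a generic cuspidal $\sigma$ on $\U_{2n}^-$ the cuspidal constituents of its weak lift have poles of $L^S(s,\cdot,\As^-)$ rather than $\As^+$, so they are of unitary type only after twisting by $\chiqe$ --- consistent with the ``moreover'' clause you prove, but the twist should be kept straight when you pass from $\sigma$ back to an element of $\Ucusp\GLnn$.
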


\section{Reduction to a local conjecture} \label{sec: redeven}

\subsection{Explicit local descent} \label{sec: local conjecture}


Let $\pi\in \Irr_{\gen}\Levi$. For any $W\in\Ind(\WhitML(\pi))$ and $\Phi\in \swrz(E^n)$ define the following function on $G'$:
\begin{equation}\label{eq: defwhitform}
\whitform(W,\Phi, g,s)=\int_{V_\gamma\bs V} W_s(\gamma v g)\wevinv^{\chiqe^{-1}}(v g)\Phi(\xi_n)\,dv,\,\,\,g\in G'
\end{equation}
where $\gamma=\left(\begin{smallmatrix}&I_n&&\\&&&I_n\\-I_n&&&\\&&I_n&\end{smallmatrix}\right)\in G$,
$\xi_n=(0,\ldots,0,1)\in E^n$ and $V_\gamma=\gamma^{-1}N\gamma\cap V$.
Note that we have the relation of characters $\psi_{N'_{\Levi'}}(u)=\psi_{N_\Levi}(\gamma u\gamma^{-1})$ if $u\in N'_{\Levi'}$.

The basic properties of $\whitform$ are summarized in the following lemma.
Its proof is identical to \cite[Lemmas~4.5 and 4.9]{1401.0198} and will be omitted.

\begin{lemma} \label{L: whitform}
\begin{enumerate}
\item The integral \eqref{eq: defwhitform} is well defined and absolutely convergent uniformly
for $s$ and $g$ in compact sets.
Thus $\whitform(W,\Phi, g,s)$ is entire in $s$ and smooth in $  g$.
In the non-archimedean case the integrand is compactly supported on $V_\gamma\bs V$.
\item For any $W\in\Ind(\WhitML(\pi))$, $\Phi$ and $s\in\C$, the function
$g\mapsto\whitform(W,\Phi,g,s)$ is smooth and $(N',\psi_{N'})$-equivariant.
\item For any $ g,x\in G'$ and $v\in V$ we have
\begin{equation} \label{eq: whitform equivariance}
\whitform(I(s,vx)W,\wevinv^{\chiqe^{-1}}(v  x)\Phi, g,s)=\whitform(W,\Phi,g x,s).
\end{equation}
\item \label{part: unramwhitform} Suppose that $E/F$ is $p$-adic and unramified, $p\ne2$, $\pi$ is unramified, $\psi$ is unramified,
$W^\circ \in\Ind(\WhitML(\pi))$ is the standard unramified vector, and $\Phi_0=1_{\OO_E^n}$ (where $\OO_E$ is the ring of integers in $E$).
Then $\whitform(W^\circ,\Phi_0,e,s)\equiv1$ (assuming $\vol(V\cap K)=\vol (V_\gamma\cap K)=1$).
\end{enumerate}
\end{lemma}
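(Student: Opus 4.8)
The plan is to imitate the argument of \cite[Lemmas~4.5 and~4.9]{1401.0198}, checking that every step goes through verbatim for the unitary group $\une$. Part~(1) amounts to an absolute-convergence estimate for the integral over $V_\gamma\bs V$. First I would unfold the definition: $W_s(\gamma v g)$ is, as a function of the coordinates of $v$, a Whittaker function on $N_\Levi$ twisted by $\modulus_P^{1/2}\wgt{\cdot}^s$, and $\wevinv^{\chiqe^{-1}}(vg)\Phi(\xi_n)$ is a Schwartz-type function in the Heisenberg coordinates of $v$ (coming from the explicit formulas for the Weil representation restricted to $V$). The key point is that the variables of $V_\gamma\bs V$ split into (a) coordinates appearing in the argument of $W_s$ through a unipotent in $N_\Levi$ on which $\psi_{N_\Levi}$ is nontrivial — along these, the gauge estimate of Jacquet--Shalika forces rapid decay — and (b) coordinates on which $\Phi$ (via $\weil$) provides Schwartz decay. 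One then invokes the standard gauge/Whittaker majorant bounds, uniform in $s$ and $g$ on compacta, to get convergence; holomorphy in $s$ and smoothness in $g$ follow by differentiating under the integral. In the $p$-adic case the same local analysis shows the integrand has compact support on $V_\gamma\bs V$.

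For part~(2), the $(N',\psi_{N'})$-equivariance is a formal consequence of two compatibilities: the character relation $\psi_{N'_{\Levi'}}(u)=\psi_{N_\Levi}(\gamma u\gamma^{-1})$ noted just before the lemma (handling the $N'_{\Levi'}$-part via left translation inside $W_s$, after conjugating by $\gamma$ and absorbing the resulting shift of the $V$-integration variable), together with the equivariance of $\wevinv^{\chiqe^{-1}}$ under $U'\subset G'$ and the definition \eqref{eq: weilext} of the extended Weil representation on $V\rtimes G'$ (handling the $U'=$ Siegel-unipotent part, where $\psi_{U'}$ matches the Heisenberg action on $\Phi$ evaluated at $\xi_n$). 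Part~(3) is then immediate from the $G$-equivariance $(I(s,g)W)_s(x)=W_s(xg)$ of the induced representation and the cocycle property of $\wev^{\chiqe}$ on $V\rtimes G'$ in \eqref{eq: weilext}: one substitutes and changes variables $v\mapsto$ (conjugate of $v$), exactly as in [loc.~cit.].

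For part~(4), the unramified computation, I would reduce to an explicit local integral: plug in $W^\circ$, which is right-$K$-invariant with $W^\circ(e)=1$, and $\Phi_0=1_{\OO_E^n}$, which is the $K'\cap$(metaplectic)-fixed vector in the Weil model under our unramified choices. By part~(1) the integrand is compactly supported and, using that everything in sight ($E/F$, $\pi$, $\psi$, the splitting defining $\chiqe$) is unramified and $p\ne2$, one checks the integrand is the characteristic function of $V_\gamma\cap K$ inside $V_\gamma\bs V$, so the integral equals $\vol(V_\gamma\bs(V\cap K))=1$ under the stated normalization. This is where I expect the main work: one must verify that no spurious local factor appears — i.e.\ that the twisted average of the spherical Whittaker function against the spherical Weil-representation vector really collapses to a single $K$-coset — which requires the precise matching of the Weil-representation splitting over $G'$ from \cite{MR2848523} with the unramified data. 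The rest is bookkeeping identical to the metaplectic case in \cite{1401.0198}; since the paper explicitly says the proof is identical and to be omitted, the proposal is simply to point to that reference after recording these compatibilities.
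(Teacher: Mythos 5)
Your proposal is correct and follows essentially the same route as the paper, which omits the proof precisely because it is the argument of \cite[Lemmas~4.5 and 4.9]{1401.0198} transported to the unitary setting: gauge/majorant estimates plus Schwartz decay for convergence, the character compatibility $\psi_{N'_{\Levi'}}(u)=\psi_{N_\Levi}(\gamma u\gamma^{-1})$ and the extended Weil action \eqref{eq: weilext} for the equivariance statements, and the support argument reducing the unramified integral to $\vol((V_\gamma\cap K)\bs (V\cap K))=1$. The compatibilities you flag (the unramified splitting over $G'$, $p\ne 2$) are exactly the points one must record before citing [loc.~cit.].
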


Let $\pi\in\Irr_{\gen}\GLnn$, considered also as a representation of $\Levi$ via $\levi$.
By the same argument as in \cite[Theorem in \S1.3]{MR1671452}, (see also \cite[Remark~4.13]{1401.0198}),
for any non-zero subrepresentation $\pi'$ of $\Ind(\WhitML(\pi))$
there exists $W\in\pi'$ and $\Phi\in\swrz(E^n)$ such that $\whitform(W,\Phi,\cdot,0)\not\equiv0$.

Assume now that $\pi\in\Irr_{\gen,\ut}\GLnn$.
By Proposition \ref{prop: M1/2} $M(s)$ is holomorphic at $s=\frac12$.
Denote by $\des^{\chiqe}(\pi)$ the space of Whittaker functions on $G'$ generated by $\whitform(M(\frac12)W,\Phi,\cdot,-\frac12)$,
$W\in \Ind(\WhitML(\pi))$, $\Phi\in\swrz(E^n)$.
By the above comment $\des^{\chiqe}(\pi)\ne0$.

Let $\pi'$ be the image of $\Pi$ under $M(\frac12)$.
By \eqref{eq: whitform equivariance} the space $\des^{\chiqe}(\pi)$ is canonically a quotient of
the $G'$-module $J_V(\pi'\otimes\wevinv^{\chiqe^{-1}})$ of the $V$-coinvariant of $\pi'\otimes\wevinv^{\chiqe^{-1}}$.
We view $J_V(\pi'\otimes\wevinv^{\chiqe^{-1}})$ as the ``abstract'' descent and $\des^{\chiqe}(\pi)$ as the ``explicit'' descent.

\subsection{Local Shimura integrals}

Now let $\pi\in\Irr_{\gen}\Levi$ and $\sigma\in\Irr_{\gen,\psi_{N'}^{-1}}G'$ with Whittaker model $\WhitGd(\sigma)$.
Following Ginzburg--Rallis--Soudry \cite[\S 10]{MR2848523}, for any $W'\in \WhitGd(\sigma)$, $W\in\Ind(\WhitML(\pi))$ and $\Phi\in\swrz(E^n)$ define
the local Shimura type integral
\begin{equation}\label{eq: localinner}
{J}( W',W,\Phi,s):=\int_{N'\bs G'}W'(g')\whitform(W,\Phi, g,s)\ dg.
\end{equation}

The analogue of these integrals in the symplectic and metaplectic case were studied in detail in \cite{MR3267119}.
The same methods no doubt work in the case at hand.
Unfortunately this has not been carried out in the literature, as far as we know.
Since this is beyond the scope of the current paper we will simply list the expected properties as a working assumption.
(See also \cite[\S 10.6]{MR2848523} and \cite{MR2522026} for some of the claims below.)

\begin{assumption}\label{A: analytic}
Suppose that $\pi\in \Irr_{\gen}\GLnn$. Then
\begin{itemize}
\item ${J}$ converges in some right-half plane (depending only on
$\pi$ and $ \sigma$), admits a meromorphic continuation in $s$.
\item For any $s\in\C$ we can choose $W'$, $W$ and $\Phi$ such that $J(W',W,\Phi,s)\ne0$.
\item If $E/F$ is $p$-adic and unramified, $p\ne 2$, $\pi$, $\sigma$ and $\psi$ are unramified, $W^\circ$ and $\Phi_0$ are as in
Lemma \ref{L: whitform} part \ref{part: unramwhitform} and $W^{'\circ}$ is $K'$-invariant with $ W^{'\circ}(e)=1$ then (see \cite[(10.64)]{MR2848523})
\begin{equation} \label{eq: Junram}
{J}( W^{'\circ},W^\circ,\Phi_0,s)=\vol(K')\frac{L(\sigma\times(\pi\otimes\chiqe^{-1}),s+\frac12)}{L(2s+1,\pi,\As^+)},
\end{equation}
assuming the Haar measures on $V,V_\gamma$ and $N'$ are normalized so that $\vol(V\cap K), \vol(V_\gamma\cap K)$ and $\vol(N'\cap K')$ are all $1$.
\end{itemize}
\end{assumption}

One also expects a functional equation for $J$ as in \cite{MR1671452} and \cite{MR3267119}.
(We will not use it in the paper.)

\subsection{Main reduction theorem}
Let $F$ be a local field.
Let $\pi\in\Irr_{\gen,\ut}\GLnn$. We say that $\pi$ is \emph{good}
if the following conditions are satisfied for all $\psi$:
\begin{enumerate}
\item $\des^{\chiqe}(\pi)$ and $\desinv^{\chiqe^{-1}}(\c{\pi})$ are irreducible.
\item ${J}(W',W,\Phi,s)$ is holomorphic at $s=\frac12$ for any $W'\in \desinv^{\chiqe^{-1}}(\c{\pi})$, $W\in\Ind(\WhitML(\pi))$ and $\Phi\in\swrz(E^n)$.

\item For any $W'\in\desinv^{\chiqe^{-1}}(\c{\pi})$,
\begin{equation} \label{eq: factorsthru}
J(W',W,\Phi,\frac12)\text{ factors through the map }
(W,\Phi)\mapsto(\whitform(M(\frac12)W,\Phi,\cdot,-\frac12)).
\end{equation}
\end{enumerate}
From \eqref{eq: whitform equivariance}:
\[
J(\sigma'(x)W',I(s,vx)W,\wevinv^{\chiqe^{-1}}(v  x)\Phi, s)=J(W',W,\Phi,s)
\]
where $\sigma'=\desinv^{\chiqe^{-1}}(\c{\pi})$.
Thus if $\pi$ is good, there is a non-degenerate $G'$-invariant pairing $[\cdot,\cdot]$ on $\desinv^{\chiqe^{-1}}(\c{\pi})\times\des^{\chiqe}(\pi)$ such that
\[
J(W',W,\Phi,\frac12)=[W',\whitform(M(\frac12)W,\Phi,\cdot,-\frac12)]
\]
for any $W'\in\desinv^{\chiqe^{-1}}(\c{\pi})$, $W\in\Ind(\WhitML(\pi))$ and $\Phi\in\swrz(E^n)$. By \cite[\S2]{MR3267120}, when $\pi$ is good, there exists a non-zero constant $c_\pi$ such that
\begin{equation} \label{eq: unemain}
\stint_{N'}J(\sigma'(u)W',W,\Phi,\frac12)\psi_{N'}(u)\,du
=c_\pi W'(e)\whitform(M(\frac12)W,\Phi,e,-\frac12).
\end{equation}

\begin{remark}\label{rem: indmeas}
Note that a priori $c_\pi$ implicitly depends on the choice of Haar measures on $G'$ and
$U$ (the latter used in the definition of the intertwining operator), but not on any other groups.
However, the Lie algebras of $G'$ and $U$ are identical, both equal to
$$\{X\in \Mat_{2n,2n}(E): \c{X}w_{2n}=w_{2n}X\}.$$
We can thus identify the gauge forms on $G'$ and $U$ and therefore $c_\pi$ does not depend on any choice
if we use the unnormalized Tamagawa measures on $G'$ and $U$ with respect to the same gauge form.
(We will say that the Haar measures on $G'$ and $U$ are compatible in this case.)
\end{remark}

From now on we assume that the measures on $G'$ and $U$ are compatible.
Note that when $E/F$ is unramified and $p$-adic then
\begin{equation}\label{eq: volform}
\vol(K')=\vol(U\cap K)\big(\prod_{j=1}^{2n}L(j,\chiq^j)\big)^{-1}.
\end{equation}

\begin{lemma}\label{L: unramified}
If $E/F$ is $p$-adic and unramified, $p\ne 2$, $\pi$, $\sigma$ and $\psi$ are unramified, $W^\circ$ and $\Phi_0$ are as in
Lemma \ref{L: whitform} part \ref{part: unramwhitform} and $W^{'\circ}$ is $K'$-invariant with $ W^{'\circ}(e)=1$ then \eqref{eq: unemain} holds with $c_\pi=1$.
\end{lemma}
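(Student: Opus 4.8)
The plan is to verify \eqref{eq: unemain} with $c_\pi=1$ by evaluating every factor in the unramified setting explicitly and checking the resulting numerical identity. First I would observe that under the stated hypotheses all the relevant local data are unramified, so the distinguished vectors $W^\circ$, $W^{'\circ}$, $\Phi_0$ are available and the stabilized integral on the left of \eqref{eq: unemain} reduces to an honest integral over a large enough compact open subgroup of $N'$; since $\sigma(u)W^{'\circ}=W^{'\circ}$ and the integrand is $(N',\psi_{N'})$-equivariant while $W^\circ$, $\Phi_0$, $W^{'\circ}$ are all fixed, the character $\psi_{N'}(u)$ cancels and the left-hand side of \eqref{eq: unemain} collapses to $\vol(N'\cap K')\,J(W^{'\circ},W^\circ,\Phi_0,\tfrac12)=J(W^{'\circ},W^\circ,\Phi_0,\tfrac12)$ since we have normalized $\vol(N'\cap K')=1$.

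Next I would compute $J(W^{'\circ},W^\circ,\Phi_0,\tfrac12)$ from the unramified formula \eqref{eq: Junram} of Assumption~\ref{A: analytic}, giving
\[
J(W^{'\circ},W^\circ,\Phi_0,\tfrac12)=\vol(K')\,\frac{L(\sigma\times(\pi\otimes\chiqe^{-1}),1)}{L(2,\pi,\As^+)}.
\]
By Proposition~\ref{prop: ueven} (applied locally, i.e.\ using that $\sigma$ is the local descent of $\pi$, so its $L$-parameter is the one attached to $\pi$ twisted appropriately), the Rankin--Selberg $L$-factor $L(\sigma\times(\pi\otimes\chiqe^{-1}),s)$ should match $L(2s-\tfrac12,\ldots)$-type Asai factors; concretely, I expect the identity
\[
\frac{L(\sigma\times(\pi\otimes\chiqe^{-1}),1)}{L(2,\pi,\As^+)}=1
\]
to hold, the pole of the Asai $L$-function at the edge being exactly cancelled by the Rankin--Selberg factor evaluated at the descended parameter. (This is the $p$-adic manifestation of the statement that the descent of $\pi$ has $L$-parameter matching that of $\pi$, so the numerator and denominator are literally the same Euler factor.)

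Then, on the right-hand side of \eqref{eq: unemain}, I would use Lemma~\ref{L: whitform}\eqref{part: unramwhitform}, which gives $\whitform(W^\circ,\Phi_0,e,s)\equiv 1$, together with the unramified intertwining computation \eqref{eq: unramwhit}, namely $M(\tfrac12)W^\circ=\frac{L(1,\pi,\As^+)}{L(2,\pi,\As^+)}W'^\circ$; combining these (and using that $\whitform$ of the unramified vector on $\Ind(\WhitMLF(\d\pi))$ is again $\equiv 1$ at $e$) yields $\whitform(M(\tfrac12)W^\circ,\Phi_0,e,-\tfrac12)=\frac{L(1,\pi,\As^+)}{L(2,\pi,\As^+)}$, and $W^{'\circ}(e)=1$. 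Finally I would plug in \eqref{eq: volform}, $\vol(K')=\vol(U\cap K)\big(\prod_{j=1}^{2n}L(j,\chiq^j)\big)^{-1}$, and the compatibility of measures on $G'$ and $U$ from Remark~\ref{rem: indmeas} to match the two sides and read off $c_\pi=1$. The main obstacle I anticipate is the bookkeeping of the $L$-factor identity in the second step: one must know precisely how the local descent transports $L$-parameters (the analogue of Proposition~\ref{prop: ueven} at the level of Satake parameters, including the $\chiqe$-twist and the $\As^\pm$ dichotomy) in order to see that the Rankin--Selberg factor and the Asai factor cancel exactly, and to keep track of the half-integral shifts so that everything is evaluated at the correct point $s=\tfrac12$. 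The archimedean normalization subtleties do not arise here since the hypotheses restrict to the $p$-adic unramified case.
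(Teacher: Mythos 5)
There is a genuine gap, and it is located in your very first step. The left-hand side of \eqref{eq: unemain} is a \emph{stable} integral over all of $N'$: by definition it equals the integral over any sufficiently large compact open subgroup $N_1'$ of $N'$, and such $N_1'$ is much larger than $N'\cap K'$. For $u\in N_1'$ outside $K'$ one has $\sigma'(u)W^{'\circ}\ne W^{'\circ}$ (the vector is $K'$-fixed, but $N'$ is not compact), so $u\mapsto J(\sigma'(u)W^{'\circ},W^\circ,\Phi_0,\frac12)$ is a genuinely non-constant, matrix-coefficient-like function, the character $\psi_{N'}$ does not cancel, and the left-hand side does not collapse to $\vol(N'\cap K')\,J(W^{'\circ},W^\circ,\Phi_0,\frac12)$. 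The evaluation of this regularized integral for unramified data is precisely the non-trivial input of the paper's proof: by \cite[Proposition~2.14]{MR3267120} the left-hand side equals $\big(\prod_{j=1}^{2n}L(j,\chiq^j)\big)\,J(W^{'\circ},W^\circ,\Phi_0,\frac12)\,/\,L(1,\sigma,\Ad)$ with $\sigma=\des^{\chiqe}(\pi)$, and the extra factor $\prod_{j}L(j,\chiq^j)/L(1,\sigma,\Ad)$ is indispensable for the bookkeeping.

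Your second step is also incorrect: there is no cancellation $L(\sigma\times(\pi\otimes\chiqe^{-1}),1)/L(2,\pi,\As^+)=1$. Since the relevant $\sigma'$ is the descent of $\c{\pi}$, its unramified parameters are such that $L(s,\sigma'\times(\pi\otimes\chiqe^{-1}))=L(s,\pi\otimes\c{\pi})=L(s,\pi,\As^+)L(s,\pi,\As^-)$, and at $s=1$ this is not $L(2,\pi,\As^+)$. The correct accounting is: by \eqref{eq: Junram} and \eqref{eq: volform} the left-hand side becomes $\vol(U\cap K)\,L(1,\pi,\As^+)L(1,\pi,\As^-)/\big(L(2,\pi,\As^+)L(1,\sigma,\Ad)\big)$, and the identity $L(1,\sigma,\Ad)=L(1,\pi,\As^-)$ cancels the second Asai factor; the right-hand side is $c_\pi\,L(1,\pi,\As^+)/L(2,\pi,\As^+)$ by \eqref{eq: unramwhit} and Lemma~\ref{L: whitform}~part~\ref{part: unramwhitform}, whence $c_\pi=1$ for compatible measures. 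With your two steps as written, comparing the two sides would instead force $c_\pi=\vol(K')\,L(2,\pi,\As^+)/L(1,\pi,\As^+)$, which is not $1$, so the conclusion cannot be reached along your route. In short, the missing ingredients are the unramified evaluation of the stable unipotent integral (\cite[Proposition~2.14]{MR3267120}) together with the identities $L(1,\sigma,\Ad)=L(1,\pi,\As^-)$ and $L(s,\pi\otimes\c{\pi})=L(s,\pi,\As^+)L(s,\pi,\As^-)$.
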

\begin{proof}
By \cite[Proposition~2.14]{MR3267120}, the left-hand side of \eqref{eq: unemain} is (assuming $\vol(N'\cap K')=1$)
\[
\big(\prod_{j=1}^{2n}L(j,\chiq^j)\big)J(W^{'\circ},W^{\circ},\Phi_0,\frac12)/L(1,\sigma,\Ad)
\]
where $\sigma=\des^{\chiqe}(\pi)$.
Since $L(1,\sigma,\Ad)=L(1,\pi,\As^-)$ and $L(s,\pi\otimes\c{\pi})=L(s,\pi,\As^+)L(s,\pi,\As^-)$,
the Lemma follows from \eqref{eq: unramwhit}, \eqref{eq: Junram}, \eqref{eq: volform} and Lemma \ref{L: whitform} part \ref{part: unramwhitform}.
\end{proof}

We can now state our main reduction theorem in the case of $\une$. Let $F$ be a number field.
\begin{theorem} \label{unethm: local to global}
Let $\pi\in\Ucusp\GLnn$ and let $k$ be as above.
Assume our Working Assumptions \ref{uneassume} and \ref{A: analytic}. Then for all $v$ $\pi_v$ is good.
Moreover, let $S$ be a finite set of places including all the archimedean and even places
such that $E/F$, $\pi$ and $\psi$   are unramified outside $S$. Let $\sigma=\des^{\chiqe}(\pi)$.
Then for any $ \varphi\in \sigma$ and $ \varphi^\vee\in\d\sigma$ which are fixed under $K'_v$ (maximal compact of $G'_v$)  for all $v\notin S$ we have
\label{eq: mainidtmodinfty}
\begin{multline} \label{eq: modglobalidentity}
 \whit^{\psi_{N'}}( \varphi) \whit^{\psi_{ N'}^{-1}}(\d\varphi )=
2^{1-k}(\prod_{v\in S}c_{\pi_v}^{-1})\frac{\prod_{j=1}^{2n}L^S(j,\chiq^j)}{L^S(1,\pi,\As^-)}\\
(\vol(N'(\OO_S)\bs N'(F_S)))^{-1}\stint_{N'(F_S)}(\sigma(u) \varphi,\d\varphi)_{G'}\psi_{ N'}(u)^{-1}\ du.
\end{multline}
\end{theorem}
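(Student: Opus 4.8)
\emph{The plan is to} deduce the theorem by combining the global computation of the Whittaker coefficient of the descent against the Eisenstein residue with the local factorization encoded in \eqref{eq: unemain}. First I would unfold. Since $\sigma=\des^{\chiqe}(\pi)$ is, by Assumption~\ref{uneassume} and Proposition~\ref{prop: ueven}, the irreducible cuspidal representation generated by the Fourier--Jacobi coefficients $\FJ(\reseisen\varphi,\Phi)$, I would start from the identity expressing $\whit^{\psi_{N'}}(\FJ(\reseisen\varphi,\Phi))$ as a residue at $s=\tfrac12$ of the global zeta integral $\int_{N'(F)\bs N'(\A)}\FJ(\eisen(\varphi,s),\Phi)(g')\psi_{N'}(g')^{-1}\,dg'$. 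Unfolding the Eisenstein series and the theta function against the Whittaker character (this is precisely the global Shimura-type integral of Ginzburg--Rallis--Soudry) factors the ``leading'' term through an Eulerian product of the local integrals $J(W'_v,W_v,\Phi_v,s)$ appearing in \eqref{eq: localinner}, with each unramified factor given by \eqref{eq: Junram}. Taking the order-$k$ residue at $s=\tfrac12$ then produces, via Proposition~\ref{prop: M1/2} and \eqref{eq: unramwhit}, the global Asai $L$-value $L^S(1,\pi,\As^-)$ in the denominator together with the product $\prod_{j=1}^{2n}L^S(j,\chiq^j)$ of abelian factors coming from the constant term / residue of the Eisenstein series, exactly as in \cite[\S9]{MR2848523} and \cite[\S5--6]{1401.0198}.

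\emph{Next I would} handle the inner product side. The same unfolding, applied this time to $\stint_{N'(F_S)}(\sigma(u)\varphi,\d\varphi)_{G'}\psi_{N'}(u)^{-1}\,du$, identifies the local ($v\in S$) contribution with the stable integral $\stint_{N'_v}J(\sigma'_v(u)W'_v,W_v,\Phi_v,\tfrac12)\psi_{N'_v}(u)\,du$, so that the quotient of the two sides over $S$ is governed place-by-place by \eqref{eq: unemain}, contributing $\prod_{v\in S}c_{\pi_v}$, while outside $S$ the unramified computation of Lemma~\ref{L: unramified} shows the corresponding local ratio is $1$. Putting the two halves together and matching the Euler products — the partial $L$-values from the $\whit\cdot\whit$ side against the local constants from the inner-product side — yields \eqref{eq: modglobalidentity}, with the factor $2^{1-k}$ coming from the normalization of the residue of the Eisenstein series (each of the $k$ cuspidal blocks $\pi_i$ contributing a factor $\tfrac12$ via the pole of its Asai $L$-function), exactly as in \cite[Conjecture~1.2]{MR3267120}. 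Finally, the assertion that every $\pi_v$ is good — conditions (1)--(3) in the definition — is read off: irreducibility of $\des^{\chiqe}(\pi_v)$ and $\desinv^{\chiqe^{-1}}(\c{\pi_v})$ follows from Assumption~\ref{uneassume} applied locally (or is part of Assumption~\ref{A: analytic}'s framework), holomorphy of $J$ at $s=\tfrac12$ for $W'\in\desinv^{\chiqe^{-1}}(\c{\pi_v})$ follows because the only possible pole of the global integral has been accounted for by the residue of the Eisenstein series, and the factorization \eqref{eq: factorsthru} follows from \eqref{eq: whitform equivariance} together with the fact that $\des^{\chiqe}(\pi_v)$ is by construction the image of the map $(W,\Phi)\mapsto\whitform(M(\tfrac12)W,\Phi,\cdot,-\tfrac12)$.

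\emph{The main obstacle} will be controlling the analytic behavior at $s=\tfrac12$: one must show that the global integral has a pole of order exactly $k$ there (no higher), that its leading coefficient is non-zero and genuinely factors as an Euler product over the $J$'s, and — crucially — that at the ramified places this forces each local $J(W'_v,W_v,\Phi_v,s)$ to be holomorphic at $s=\tfrac12$ rather than merely meromorphic. This is where Working Assumption~\ref{A: analytic} (meromorphic continuation, non-vanishing, and the unramified formula \eqref{eq: Junram}) does the heavy lifting, in the same way \cite[Proposition~6.2]{1401.0198} does in the metaplectic case; the argument is then formally parallel to \cite[\S6]{1401.0198}, and I would follow that treatment step for step, the only genuinely new input being the bookkeeping of the Asai $L$-function $L^S(1,\pi,\As^-)=L(1,\sigma,\Ad)$ and the factor $\prod_{j=1}^{2n}L(j,\chiq^j)$ in place of their symplectic analogues. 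A secondary technical point is the compatibility of Haar measures on $G'$ and $U$ flagged in Remark~\ref{rem: indmeas}, which must be used consistently so that the constants $c_{\pi_v}$ are the intrinsic ones and the product $\prod_{v\in S}c_{\pi_v}^{-1}$ in \eqref{eq: modglobalidentity} is well defined.
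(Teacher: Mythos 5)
Your overall route is the same as the paper's: the GRS formula for the Whittaker coefficient of $\FJ(\reseisen\varphi,\Phi)$, its Euler factorization, the GRS inner-product (global Shimura) formula, a pole-order comparison at $s=\frac12$ to get holomorphy of the local integrals, and then \eqref{eq: unemain} together with Lemma~\ref{L: unramified} to assemble \eqref{eq: modglobalidentity}. One conflation in your first step should be fixed: unfolding $\int_{N'(F)\bs N'(\A)}\FJ(\eisen(\varphi,s),\Phi)(u)\psi_{N'}(u)^{-1}\,du$ does \emph{not} produce an Euler product of the Shimura integrals $J(W'_v,W_v,\Phi_v,s)$ of \eqref{eq: localinner} --- those require a cusp form $W'$ on $G'$ as input; it produces the product of the local functionals $\whitform_v(W_v,\Phi_v,\cdot,s)$, and after extracting the order-$k$ term one gets \eqref{uneeq: whitdesc}, with local factors $\whitform_v(M_v(\frac12)W_v,\Phi_v,\cdot,-\frac12)$ and the ratio of partial $\As^+$ $L$-functions in front. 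The $J$'s enter only on the inner-product side, via Proposition~\ref{prop: innerune}. Your second paragraph treats that side correctly, so this is repairable bookkeeping.

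The genuine gap is your justification of goodness condition (3), i.e.\ \eqref{eq: factorsthru}. You claim it ``follows from \eqref{eq: whitform equivariance} together with the fact that $\des^{\chiqe}(\pi_v)$ is by construction the image of the map $(W,\Phi)\mapsto\whitform(M(\frac12)W,\Phi,\cdot,-\frac12)$.'' That is a non sequitur: the whole content of \eqref{eq: factorsthru} is that $J(W',W,\Phi,\frac12)$ vanishes whenever $\whitform(M(\frac12)W,\Phi,\cdot,-\frac12)\equiv0$, and neither the equivariance property nor the definition of the explicit descent gives this --- a priori $J(\cdot,\cdot,\cdot,\frac12)$ is a form on the induced representation, not on its descent quotient. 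The paper proves it by a global argument: if $\whitform(M_{v_0}(\frac12)W_{v_0},\Phi_{v_0},\cdot,-\frac12)\equiv0$ at one place, then by \eqref{uneeq: whitdesc} the Whittaker coefficient of $\FJ(\reseisen\varphi,\Phi)$ vanishes identically, hence $\FJ(\reseisen\varphi,\Phi)\equiv0$ by the irreducibility and genericity of the global descent (Assumption~\ref{uneassume}); plugging this into \eqref{uneeq: FJinnerprod} and using the nonvanishing of $J_v$ at the remaining places (Assumption~\ref{A: analytic}) forces $J_{v_0}(W'_{v_0},W_{v_0},\Phi_{v_0},\frac12)=0$. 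Without this step the constant $c_{\pi_v}$ in \eqref{eq: unemain} is not even defined, so the final assembly of \eqref{eq: modglobalidentity} cannot proceed. (Your pole-order argument for condition (2) --- order $\le k$ on the cuspidal-pairing side versus order $\ge k$ from the pole of $L^S(s,\pi\otimes\c{\pi})$ and nonvanishing of the $J_v$ --- is the paper's argument and is fine; similarly, local irreducibility is deduced from Assumption~\ref{uneassume} through the factorization \eqref{uneeq: whitdesc}, not by ``applying the assumption locally.'')
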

Note that  by Lemma~\ref{L: unramified},  the above statement is independent of the choice of $S$.

\subsection{Proof of Theorem~\ref{unethm: local to global}}
The proof follows the same line of argument as \cite[Theorem~6.2]{1401.0198}. For convenience, all global measures are taken to be Tamagawa measures.

From \cite[Theorem~9.7 (1)]{MR2848523} and the same procedure described in the proof of \cite[Theorem~6.3]{1401.0198}, we get the following identity:
\begin{proposition} \label{prop: whitdesc}
Let $\pi\in \Ucusp\GLnn$ and $\varphi\in \AF(\pi)$, $\Phi\in \swrz(\A_E^n)$,
\[
\whit^{\psi_{N'}}(\FJ(\reseisen\varphi,\Phi),g)=
\int_{V_\gamma(\A)\bs V(\A)}\whit^{\psi_N}(\reseisen\varphi, \gamma   v  g)
\wevinv^{\chiqe^{-1}}(v g)\Phi(\xi_n)\,dv, \ \ g\in G'(\A_E).
\]
Here $\whit^{\psi_{N'}}(\phi,\cdot)=\int_{N'(F)\bs N'(\A)}\phi(u\cdot)\psi_{N'}^{-1}(u)\ du$. $\qed$
\end{proposition}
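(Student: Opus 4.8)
The plan is to obtain this identity as the translation of \cite[Theorem~9.7(1)]{MR2848523} into the normalizations fixed in \S\ref{sec: characters}, following \emph{mutatis mutandis} the argument given for the metaplectic group in the proof of \cite[Theorem~6.3]{1401.0198}. The hypothesis $\pi\in\Ucusp\GLnn$ enters only through \cite[Theorem~2.1]{MR2848523}, which guarantees that $\eisen(\varphi,s)$ has a pole of order exactly $k$ at $s=\frac12$, so that $\reseisen\varphi$ is a well-defined residual automorphic form on $G$. The two points that require attention beyond quoting \cite{MR2848523} are the normalization of the Weil representation --- our $\weil_{\psiweil}$ differs from the one in \cite[(1.5)]{MR2848523} by replacing the character $\psi(x)$ there with $\psiweil(\frac12 x)$ (see the Remark following its definition) --- and the occurrence of $\chiqe^{-1}$ in place of $\chiqe$, which comes from using $\psi_{N_\GLnn}^{-1}$ in \eqref{def: FJ}; both are dealt with exactly as the analogous bookkeeping in \cite{1401.0198}.

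First I would unwind the definitions of $\whit^{\psi_{N'}}(\cdot,g)$ and of $\FJ$ in \eqref{def: FJ}, and open the theta series, to rewrite
\[
\whit^{\psi_{N'}}(\FJ(\reseisen\varphi,\Phi),g)=\int_{N'(F)\bs N'(\A)}\int_{V(F)\bs V(\A)}\reseisen\varphi(vug)\sum_{\xi\in E^n}\wevinv^{\chiqe^{-1}}(vug)\Phi(\xi)\,\psi_{N'}(u)^{-1}\,dv\,du ,
\]
the action of $u\in N'$ and of $v\in V$ on $\swrz(E^n)$ being the one furnished by \eqref{eq: weilext}. The core of the argument is then a sequence of root exchanges that simultaneously collapses the sum over $E^n$ and reorganizes the unipotent integrations: pairing the integration over $N'_{\Levi'}(F)\bs N'_{\Levi'}(\A)$ against the generic character $\psi_{N'_{\Levi'}}$ with the relevant parts of the $U'$- and $V$-integrations --- which act on $\swrz(E^n)$ through the Weil representation --- forces the sum onto the $N'_{\Levi'}$-orbit of $\xi_n=(0,\dots,0,1)$ and unfolds the compact quotients to non-compact ones, after which the Schwartz function survives only as the factor $\wevinv^{\chiqe^{-1}}(vg)\Phi(\xi_n)$. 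Conjugating by $\gamma$ throughout, the unipotent subgroup over which $\reseisen\varphi$ is now integrated is carried onto $N=N_\Levi\ltimes U$, the characters match --- using $\psi_{N'_{\Levi'}}(u)=\psi_{N_\Levi}(\gamma u\gamma^{-1})$ for $u\in N'_{\Levi'}$ together with the analogous identities for the $U'$- and $V$-directions that assemble the degenerate character $\psi_N$ --- and the root directions of $N$ not yet accounted for are supplied by the vanishing of the corresponding Fourier periods of $\reseisen\varphi$; this vanishing, together with the identification of the resulting inner integral with $\whit^{\psi_N}(\reseisen\varphi,\cdot)$, is exactly what \cite[Theorem~9.7(1)]{MR2848523} provides. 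What is left is the integral over $V_\gamma(\A)\bs V(\A)$, with $V_\gamma=\gamma^{-1}N\gamma\cap V$, of $\whit^{\psi_N}(\reseisen\varphi,\gamma v g)\,\wevinv^{\chiqe^{-1}}(vg)\Phi(\xi_n)$, which is the assertion.

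I expect the main difficulty to be twofold, and neither part conceptual. First, one must keep track of the $\widetilde{\Sp(W)}$-cocycle and of the $\chiqe$-dependent splitting over $G'$ through the collapse of the theta series and the conjugation by $\gamma$; since our normalization of $\weil_{\psiweil}$ (with the half-shift $\psiweil(\frac12\,\cdot)$) and the extension \eqref{eq: weilext} of $\wev^{\chiqe}$ to $V\rtimes G'$ differ from those of \cite{MR2848523}, this computation genuinely has to be carried out afresh for $\bigune$ rather than merely cited. Second, and more delicately, since $\reseisen\varphi$ is residual and not cuspidal on $G$, every Fubini interchange and root exchange above must be justified by controlling the growth of $\reseisen\varphi$ and the rapid decrease of its intermediate Fourier periods along the unipotent directions in play; the cuspidality of $\FJ(\reseisen\varphi,\Phi)$ on $G'$ (\cite[Theorem~9.7]{MR2848523}) only becomes available at the end, so the requisite estimates are themselves part of what is imported from \cite[Theorem~9.7(1)]{MR2848523} and the method of \cite[Theorem~6.3]{1401.0198}.
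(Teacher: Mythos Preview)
Your proposal is correct and follows exactly the approach the paper indicates: the paper's entire argument for this proposition is the single sentence ``From \cite[Theorem~9.7(1)]{MR2848523} and the same procedure described in the proof of \cite[Theorem~6.3]{1401.0198}, we get the following identity,'' together with the $\qed$. Your write-up simply unpacks what that procedure is --- unfolding the theta series, performing the root exchanges, conjugating by $\gamma$, and tracking the normalization of the Weil representation and the splitting $\chiqe$ --- and correctly identifies the two bookkeeping issues (the half-shift in $\psiweil$ and the sign on $\chiqe$) as well as the analytic point about $\reseisen\varphi$ being residual rather than cuspidal, all of which are handled by the cited references.
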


By Proposition \ref{prop: whitdesc}, formula \eqref{eq: unramwhit} and Lemma \ref{L: whitform} part \ref{part: unramwhitform},
for any factorizable $\varphi\in\AF(\pi)$ we have (for $S$ large enough)
\begin{equation}\label{uneeq: whitdesc}
 \whit^{\psi_{N'}}(\FJ(\reseisen\varphi,\Phi),g)=\resm^S(\pi)
\prod_{v\in S}\whitform_v(M_v(\frac12) W_v,\Phi_v,g_v,-\frac12)
\end{equation}
for any $\Phi=\otimes_v\Phi_v\in\swrz(\A_E^n)$ where
$\whit^{\psi_{N_\Levi}}(\varphi,\cdot)=\prod_v W_v$ and
\[
\resm^S(\pi)=\lim_{s\rightarrow\frac12}(s-\frac12)^k\frac{L^S(2s,\pi,\As^+)}{L^S(2,\pi,\As^+)}.
\]
This factorization together with Assumption~\ref{uneassume} gives the irreducibility of the local descent $\des^{\chiqe}(\pi_v)$ when $\pi_v$ is a local component of $\pi$.

Meanwhile, let $\sigma'$ be an irreducible $\psi_{N'}^{-1}$-generic cuspidal automorphic representation of $\une$.
From \cite[Theorem 10.4, (10.6), (10.64)]{MR2848523} we get:
\begin{proposition}\label{prop: innerune}
Suppose that $\varphi'\in \sigma'$ with $\whit^{\psi_{N'}^{-1}}(\varphi')=\prod_v W_v'$,
$\Phi=\otimes\Phi_v$ and $\whit^{\psi_{N_\Levi}}(\varphi,\cdot)=\prod_v W_v$.
Then for any sufficiently large finite set of places $S$ we have
\begin{multline}\label{uneeq: globalinner}
(\varphi',\FJ(\eisen(\varphi,s),\Phi))_{G'}=\\
\frac12\big(\prod_{j=1}^{2n}L^S(j,\chiq^j)\big)^{-1}
\frac{L^S(s+\frac12,\sigma'\otimes\pi\otimes\chiqe^{-1})}{L^S(2s+1,\pi,\As^+)}
\prod_{v\in S}J_v( W_v',W_v,\Phi_v,s).
\end{multline}
Here on the right hand side we take the unnormalized Tamagawa measures on $G'(F_S)$, $N'(F_S)$, $V(F_S)$ and $V_\gamma(F_S)$
(which are independent of the choices of gauge forms when $S$ is sufficiently large).
\end{proposition}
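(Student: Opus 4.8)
The plan is to obtain this as an instance of the global Rankin--Selberg integral of Ginzburg--Rallis--Soudry, performing the same bookkeeping as in \cite[\S6]{1401.0198}. First I would expand the left-hand side as $\int_{G'(F)\bs G'(\A)}\varphi'(g)\,\FJ(\eisen(\varphi,s),\Phi)(g)\,dg$, substitute the definition \eqref{def: FJ} of $\FJ$ together with the expansion of the theta function, and, for $\Re s$ in the range of absolute convergence of the Eisenstein series, unfold $\eisen(\varphi,s)$ into a sum over $P(F)\bs G(F)$. Interchanging the summation with the integrations over $G'(F)\bs G'(\A)$ and $V(F)\bs V(\A)$ --- justified by the estimates underlying absolute convergence, exactly as in \cite{1401.0198} --- collapses the $P(F)\bs G(F)$ sum into a sum over the double cosets $P(F)\bs G(F)/(V\rtimes G')(F)$.

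The geometric core is then the orbit analysis of \cite[\S10]{MR2848523}: applying the relevant ``exchange of roots'' lemmas to the Siegel parabolic $P$ of $G=\bigune$ and the Jacobi subgroup $V\rtimes G'$, one shows that every double coset except one forces the integral of the theta kernel against $\psi_V$ to vanish, and that the surviving coset is the open one represented by the element $\gamma$ of \eqref{eq: defwhitform}. Keeping only this coset, the lattice sum in $\Theta^\Phi$ collapses to the single vector $\xi_n=(0,\dots,0,1)$ --- whose stabilizer matches $V_\gamma=\gamma^{-1}N\gamma\cap V$ --- the inner integration over $V_\gamma(\A)\bs V(\A)$ becomes the adelic counterpart of the local integral \eqref{eq: defwhitform} evaluated on $W=\whit^{\psi_{N_\Levi}}(\varphi,\cdot)$, just as in the proof of Proposition~\ref{prop: whitdesc}, and the residual $g$-integration against the cusp form $\varphi'$ unfolds, via its Whittaker expansion, to the Whittaker function $W'=\whit^{\psi_{N'}^{-1}}(\varphi')$ integrated over $N'(\A)\bs G'(\A)$ (the two equivariance properties being compatible by Lemma~\ref{L: whitform} part~2). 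This produces the global Shimura-type integral $\int_{N'(\A)\bs G'(\A)}W'(g)\,\whitform(W,\Phi,g,s)\,dg$ in its adelic incarnation, which, since $\varphi$, $\varphi'$ and $\Phi$ are factorizable and the local data are $K_v$-fixed for $v\notin S$, factors as $\prod_v J_v(W'_v,W_v,\Phi_v,s)$ with $J_v$ as in \eqref{eq: localinner}.

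It remains to evaluate the product over $v\notin S$. There all data are unramified and \eqref{eq: Junram} gives $J_v=\vol(K'_v)\,L_v(s+\tfrac12,\sigma'_v\times(\pi_v\otimes\chiqe_v^{-1}))/L_v(2s+1,\pi_v,\As^+)$; multiplying over $v\notin S$ produces the ratio $L^S(s+\tfrac12,\sigma'\otimes\pi\otimes\chiqe^{-1})/L^S(2s+1,\pi,\As^+)$, while by \eqref{eq: volform} and the compatibility of the Tamagawa measures on $U$ and $G'$ (Remark~\ref{rem: indmeas}) the product of the $\vol(K'_v)$ contributes the factor $\big(\prod_{j=1}^{2n}L^S(j,\chiq^j)\big)^{-1}$. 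The remaining numerical constant $\tfrac12$ reflects the normalization of the Weil representation and theta kernel (the various factors $\tfrac12$, $\tfrac14$ entering $\psi_{U'}$, $\Hei{v}$ and the metaplectic splitting), exactly as in \cite{1401.0198}; here one must take care that our $\psi_{N_\GLnn}$ differs from the conventions of \cite{MR2848523} (cf.\ \S\ref{sec: characters}). Assembling these gives \eqref{uneeq: globalinner}, with the local integrals at $v\in S$ taken against the unnormalized Tamagawa measures as stated. Finally, having proved the identity for $\Re s$ large, I would extend it to all $s$ by meromorphic continuation --- valid because $\eisen(\varphi,s)$ continues meromorphically and, by Working Assumption~\ref{A: analytic} and the meromorphy of the $L$-functions, so does the right-hand side. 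The main obstacle is the orbit computation of the second paragraph: one must verify that the combinatorial geometry attached to $G=\bigune$ indeed leaves a single open $P(F)$-orbit and that the theta kernel integrates to zero on all the others; I would dispose of this by invoking \cite[\S10]{MR2848523} directly (in particular Theorem~10.4 and (10.6) there), so that the genuinely new content is limited to the convergence and factorizability bookkeeping and to the careful tracking of the constants $\tfrac12$ and $\prod_{j=1}^{2n}L^S(j,\chiq^j)$ under our normalizations.
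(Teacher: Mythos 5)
Your overall route is the same as the paper's: the paper gives no independent argument here but simply invokes \cite[Theorem 10.4, (10.6), (10.64)]{MR2848523}, i.e.\ exactly the unfolding of the Eisenstein series against the Fourier--Jacobi coefficient and the unramified computation that you sketch, so the substance of your proposal is correct. One concrete misstep: you attribute the factor $\tfrac12$ to the normalization of the Weil representation and theta kernel. That is not where it comes from --- the unfolding of \cite{MR2848523} computes the unnormalized integral $\int_{G'(F)\bs G'(\A)}\varphi'(g)\,\FJ(\eisen(\varphi,s),\Phi)(g)\,dg$ with no such constant, and the $\tfrac12$ enters only because the pairing $(\cdot,\cdot)_{G'}$ of \eqref{eq: innerdef} divides by $\vol(G'(F)\bs G'(\A))$, which equals $2$ for the Tamagawa measure (as the paper notes immediately after the proposition). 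The internal factors $\tfrac12$, $\tfrac14$ in $\psi_{U'}$ and $\Hei{v}$ are already absorbed into the conventions matching our $\wev^{\chiqe}$ with that of \cite{MR2848523} and do not produce a residual constant. The rest of your bookkeeping --- the collapse of $\prod_{v\notin S}\vol(K'_v)$ to $\big(\prod_{j=1}^{2n}L^S(j,\chiq^j)\big)^{-1}$ via \eqref{eq: volform}, the factorization over $S$, and the extension from the range of convergence by meromorphic continuation --- is as intended.
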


Note that the volume of $G'(F)\bs G'(\A)$, appearing on the right-hand side of \eqref{eq: innerdef}, is equal to $2$ when we use Tamagawa measure as in \cite{MR2848523}.

Let $\pi\in\Ucusp\GLnn$ and $\sigma'=\desinv^{\chiqe^{-1}}(\c{\pi})$.
By Proposition \ref{prop: ueven} $\d{\sigma'}$ weakly lifts to $\pi\otimes\chiqe^{-1}$.
Thus, we have $L^S(s,\sigma'\otimes(\pi\otimes\chiqe^{-1}))=L^S(s,\pi\otimes\d\pi)=L^S(s,\pi\otimes\c{\pi})$, which has a pole of order $k$ at $s=\frac12$.
By our Working Assumptions, $ {J}_v(  W'_v,W_v,\Phi_v,s)$ is non-vanishing at $s=\frac12$ for suitable $W_v$, $ W'_v$ and $\Phi_v$.
Thus the right-hand side of \eqref{uneeq: globalinner} (when $ \varphi'\in\sigma'$)
has a pole of order at least $k$ at $s=\frac12$ for suitable $\varphi$, $ \varphi'$ and $\Phi$.
On the other hand, the left-hand side of \eqref{uneeq: globalinner} has a pole of order $\le k$ because this is true for
$\eisen(\varphi,s)$ and $\varphi'$ is rapidly decreasing.
We conclude that ${J}_v( W'_v,W_v,\Phi_v,s)$ is holomorphic at $s=\frac12$ for all $v$.

Multiplying \eqref{uneeq: globalinner} by $(s-\frac12)^k$ and taking the limit as $s\rightarrow\frac12$, we get
for $\varphi\in \AF(\pi)$, $\Phi\in\swrz(\A_E^n)$  and $\varphi'\in \sigma'$:
\begin{multline} \label{uneeq: FJinnerprod}
(\varphi',\FJ(\reseisen\varphi,\Phi))_{G'}=\\
\frac12\big(\prod_{j=1}^{2n}L^S(j,\chiq^j)\big)^{-1}
\frac{\lim_{s\rightarrow1}(s-1)^kL^S(s,\pi\otimes\c{\pi})}{L^S(2,\pi,\As^+)}
\prod_{v\in S}J_v(W_v',W_v,\Phi_v,\frac12).
\end{multline}

To show that \eqref{eq: factorsthru} holds for $\pi_{v_0}$ at a fixed place $v_0$,
assume that $W_{v_0}$ and $\Phi_{v_0}$ are such that $\whitform(M_{v_0}(\frac12)W_{v_0},\Phi_{v_0},\cdot,-\frac12)\equiv 0$.
Then by \eqref{uneeq: whitdesc}  $\whit^{\psi_{N'}}(\FJ(\reseisen\varphi,\Phi),\cdot)\equiv 0$ and therefore $\FJ(\reseisen\varphi,\Phi)\equiv0$
by the irreducibility and genericity of the descent. By \eqref{uneeq: FJinnerprod} we conclude that $J_{v_0}( W'_{v_0},W_{v_0},\Phi_{v_0},\frac12)=0$.
Thus, $\pi_{v_0}$ is good.

The identity \eqref{eq: modglobalidentity} follows from \eqref{uneeq: whitdesc}, \eqref{uneeq: FJinnerprod}, \eqref{eq: unemain} and Lemma~\ref{L: unramified}.
This conclude the proof of Theorem \ref{unethm: local to global}.

\subsection{Local conjecture}

It follows from Theorem~\ref{unethm: local to global} and Proposition \ref{prop: ueven}
that (under our work assumptions) Conjecture \ref{conj: global} for $\une$  is a consequence of the following local conjecture:
\begin{conjecture} \label{uneconj: main}
Let $\pi\in \Irr_{\gen,\ut}\GLnn$ be unitarizable and good.
Then (for compatible Haar measures on $G'$ and $U$) we have $c_{\pi}=\omega'_\pi(-1)$.
\end{conjecture}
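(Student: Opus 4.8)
The plan is to follow the strategy of \cite{1401.0198} and reduce the computation of $c_\pi$ to the case where $\pi$ is the Langlands quotient of an induced representation from a good parabolic, and ultimately to a rank-one computation. The starting point is the defining relation \eqref{eq: unemain}: $c_\pi$ is characterized by
\[
\stint_{N'}J(\sigma'(u)W',W,\Phi,\tfrac12)\psi_{N'}(u)\,du=c_\pi W'(e)\,\whitform(M(\tfrac12)W,\Phi,e,-\tfrac12),
\]
so to pin down $c_\pi$ it suffices to evaluate both sides on one convenient choice of $W'$, $W$ and $\Phi$. First I would record the compatibility of $c_\pi$ under the natural operations: by an argument parallel to \cite[\S 2]{MR3267120} and the multiplicativity of the local Shimura integrals, $c_\pi$ depends only on the Langlands data of $\pi$; in particular it is insensitive to replacing $\pi$ by an irreducible parabolic induction from representations of smaller $\GL$'s, since both sides of \eqref{eq: unemain} then factor through the corresponding intertwining operators and $\whitform$'s. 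Combined with Lemma~\ref{ind} and the classification of $\Irr_{\gen,\ut}\GLnn$, this lets me assume $\pi$ is (essentially) square-integrable of unitary type, i.e.\ $\d\pi\cong\c\pi$ and $L(0,\pi,\As^+)=\infty$, and that $\omega_\pi'(-1)$ is the sole quantity on the right.

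Next I would identify the dependence of $c_\pi$ on $\psi$. The whole construction is stated ``for all $\psi$'', and changing $\psi$ to $\psi_a(x)=\psi(ax)$ with $a\in F^\times$ conjugates $N'$, $V$, the Weil representation, and the intertwining operator by a torus element; tracking the resulting Jacobian factors and the action of $a$ on $\Phi(\xi_n)$ through $\wevinv^{\chiqe^{-1}}$, one finds that $c_\pi$ is multiplied by a character evaluated at $a$ which on the diagonal of $\GLnn$ collapses to $\omega_\pi(a)$ restricted suitably; using $\omega_\pi(x)=\omega_\pi'(x\c{x}^{-1})$ and $\omega_\pi|_{F^\times}=1$ this is consistent with the claimed answer and isolates $\omega_\pi'(-1)$ as the only $\psi$-independent ambiguity. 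The archimedean and $p$-adic ramified cases of $E/F$ would be handled uniformly at this point since no unramified input is available.

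The remaining core is the rank-one base case, where $\GLnn=\GLE_2$ (so $n=1$, $G=\U_4^-$, $G'=\U_2^-$) and $\pi$ is a square-integrable representation of $\GL_2(E)$ of unitary type, or an appropriate one-dimensional twist thereof. Here I would compute directly: the intertwining operator $M(\tfrac12)$ on $\Ind_P^G(\pi\nu^{1/2})$, the descent integral $\whitform(\cdot,\cdot,e,-\tfrac12)$ as an explicit convolution of the Whittaker function of $\pi$ against the Weil representation lattice model, and the stable integral $\stint_{N'}J(\sigma'(u)W',\cdot)\psi_{N'}(u)\,du$ via the unfolding in \cite[\S 10]{MR2848523}. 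One expects the Weil-representation normalization constants and the $\gamma$-factors of $M(\tfrac12)$ to combine so that the only surviving discrepancy between the two sides is the sign $\omega_\pi'(-1)$, which enters through the action of $w_U$ (whose sign is $(-1)^{m+1}$) on the linear form $L$ of \S\ref{sec: inflate} together with the equivariance $L(\pi(h)\phi)=\omega_\pi'(\lambda(h))L(\phi)$. \textbf{The main obstacle} will be this last bookkeeping: carrying the $\chiqe$-dependent splitting of the metaplectic cover and the normalization of $\weil_{\psiweil}^{\chiqe}$ through the intertwining operator without losing or gaining spurious Weil indices $\weilfctr$, and matching it against the normalization implicit in the regularized integral $\stint$. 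This is exactly the step that in \cite{1404.2905} required the delicate local identity; here, lacking that theorem in the unitary case, one can at best verify it by hand in the $\GLE_2$ case and invoke the already-established reductions to propagate it, which is why the statement remains a conjecture in general.
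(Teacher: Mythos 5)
You are trying to prove a statement that the paper itself does not prove: Conjecture~\ref{uneconj: main} is stated as a conjecture, and the only support the authors offer is the explicitly formal (``heuristic'') computation of \S\ref{sec: n=1} for $n=1$, in which the integrals are manipulated as if they were absolutely convergent, together with the analogous $\U_3$ computation in the odd case. Your proposal is likewise not a proof — you concede at the end that the key identity can at best be checked by hand for $\GLE_2$ and that ``the statement remains a conjecture'' — and the reduction steps it rests on are themselves unestablished. The assertion that $c_\pi$ depends only on the Langlands data of $\pi$, i.e.\ is compatible with irreducible parabolic induction, would require multiplicativity statements for the explicit descent $\des^{\chiqe}(\pi)$, for $M(\frac12)$, and above all for the stable integral $\stint_{N'}J(\sigma'(u)W',W,\Phi,\frac12)\psi_{N'}(u)\,du$ appearing in \eqref{eq: unemain}; nothing of the sort is available in this paper, where even convergence, meromorphic continuation and nonvanishing of $J$ are only Working Assumption~\ref{A: analytic}. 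Similarly, your analysis of the dependence on $\psi$ only checks that the value $\omega'_\pi(-1)$ is \emph{consistent} with replacing $\psi$ by $\psi_a$; it does not determine the constant. In the metaplectic analogue, exactly these reductions plus the base case constitute the separate paper \cite{1404.2905}, and they cannot simply be invoked in the unitary setting.

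Where your sketch does track the paper is the rank-one computation. In \S\ref{sec: n=1} the authors support the conjecture for $n=1$ ($G'=\U_2^-$) by using the $\GL_2(F)$-invariant functional on the Whittaker model of $\pi$ (distinction, via \cite{MR2787356}) to build the $H$-invariant form $L_W$ of \S\ref{sec: inflate}, recovering $M^*W$ from $L_W$ by Fourier inversion (``Claim''~\ref{claim: M12}), and exploiting the $\GL_2(E)$-invariance of the Whittaker pairing to move by a well-chosen element $b$; the sign $c_\pi=\omega_\pi(\rr)=\omega'_\pi(-1)$ indeed emerges from the equivariance $L(\pi(h)\phi)=\omega'_\pi(\lambda(h))L(\phi)$, as you anticipate. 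But even there every step is formal, with no convergence justification, so the paper claims no proof in any case; your proposal therefore neither reproduces a proof from the paper (there is none to reproduce) nor supplies the missing local identity.
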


\section{A heuristic argument: case of $\U_2$} \label{sec: n=1}

We will substantiate Conjecture \ref{uneconj: main} by giving a heuristic argument in the case $n=1$.
Namely, we will manipulate the integrals formally as if they were absolutely convergent.

Throughout let $F$ be a local field. We first consider the case where $E/F$ is inert.
We use the following measures. On $E$ and $F$ we take the self-dual Haar measures with respect to $\psi$ and $\psi\rest_F$ respectively.
These determine multiplicative Haar measures on $E^\times$ and $F^\times$.
On $G'$, using the Bruhat decomposition we take $dg=dx\ dy \ dt$ where
$g=\sm{t}{}{}{\c{t}^{-1}}\sm{1}{x}{}{1}\sm{1}{}{y}{1}$ with $x,y\in F$, $t\in E^{\times}$.
On $U$, we take the Haar measure $du=dx\ dy \ dz$ where we write
$u=\left(\begin{smallmatrix}1&&x&y\\&1&z&\c{x}\\&&1&\\&&&1\end{smallmatrix}\right)$ with $y,z\in F$ and $x\in E$.
Note that these measures are compatible in the sense of  Remark~\ref{rem: indmeas}.


We follow the arguments of \cite[\S7]{1401.0198} skipping the similar calculations and emphasizing the differences.

For $\Phi\in\swrz(E)$ and $f\in C^\infty(G)$ (recall $G=U_4^-$) define
\[
\Phi*f(g)=\int_E f(g\left(\begin{smallmatrix}1&r&&\\&1&&\\&&1&{-\c{r}}\\&&&1\end{smallmatrix}\right))\Phi(r)\ dr.
\]
Similarly to \cite[``Claim 7.1"]{1401.0198} we get the following formula for $\whitform(W,\Phi,\cdot,s)$ on the big cell:


\begin{claim} Let $g=\diag(1,\sm{}1{-1}{}\sm{t}{tz}{}{\c{t}^{-1}},1)\in G'\subset G$, ($t\in E^\times$, $z\in F$). Then
\begin{multline} \label{eq: whitn1}
\whitform(W,\Phi,g,s)
=\\\abs{t}^{\frac12}\chiqe^{-1}(t)
\int_F\int_{E}(\Phi*(W_s))(\left(\begin{smallmatrix}t^{-1}&&&\\&1&&\\&&1&\\&&&\c{t}\end{smallmatrix}\right)
\left(\begin{smallmatrix}1&&&\\&1&&\\x&y&1&\\-z&\c{x}&&1\end{smallmatrix}\right)
\left(\begin{smallmatrix}&&1&\\&&&1\\{-1}&&&\\&-1&&\end{smallmatrix}\right))\psi(\frac12 y)\,dx\,dy.
\end{multline}
\end{claim}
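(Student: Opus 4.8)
The plan is to unwind the definition of $\whitform(W,\Phi,g,s)$ in \eqref{eq: defwhitform} for $G=\U_4^-$ and compute the integral over $V_\gamma\bs V$ explicitly when $g$ lies on the big cell of $G'$. First I would describe the relevant unipotent groups concretely: here $\GLnn=\GLE_2$, $U$ is the $2\times 2$ Siegel unipotent, $V$ is the unipotent radical of the parabolic with Levi $\GL_1\times G'$ (so $\dim V/V_0$ corresponds to a Heisenberg group $W\oplus F$ with $W\cong E^2$), and $V_\gamma=\gamma^{-1}N\gamma\cap V$ where $\gamma$ is the explicit Weyl element displayed in Section~\ref{sec: local conjecture}. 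I would parametrize $V$ by the coordinates entering $\Hei{v}$ and the extra ``abelian'' directions, identify which of these lie in $V_\gamma$, and pick coordinates $x\in E$, $y,z\in F$ (matching the measure conventions fixed at the start of Section~\ref{sec: n=1}) for the quotient.

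Next I would substitute the explicit formula for $\wevinv^{\chiqe^{-1}}(vg)\Phi$ evaluated at $\xi_1=(0,1)\in E$ (recall $n=1$). This uses the realization of the Weil representation $\weil_{\psiweil}$ on $\swrz(E)$ together with the extension \eqref{eq: weilext} to $V\rtimes G'$ and the splitting depending on $\chiqe$ chosen as in \cite{MR2848523}. The Heisenberg part $\weil_{\psiweil}^{\chiqe}(\Hei{v})$ acts by a translation-and-phase, so evaluating at $\xi_1$ produces the convolution-type operation $\Phi*(W_s)$ defined just before the Claim, an integral over the ``$r\in E$'' direction coming from $V_0$. The remaining integration variables after disentangling the $V_\gamma\bs V$ quotient should collapse to $x,y$ as in \eqref{eq: whitn1}, and the $\psi(\frac12 y)$ factor arises from $\psi_V$ and the normalization $\psiweil(\frac12\,\cdot)$ in our convention; the $\abs{t}^{\frac12}\chiqe^{-1}(t)$ prefactor comes from the action of $\levi(t)$ on the Whittaker function ($\abs{t}^{\frac12}$ from $\wgt{\cdot}^s$ and $\modulus_P$ bookkeeping at $s$ specialized appropriately — more precisely it is tracked through $W_s$) together with the Weil-representation normalization contributing $\chiqe^{-1}(t)$. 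Throughout, the matrix identity expressing $\gamma v g$ in terms of the block matrix appearing on the right-hand side of \eqref{eq: whitn1} is obtained by a direct Bruhat-type computation, conjugating the unipotent coordinates past $\gamma$ and the torus element $\diag(1,\sm{}1{-1}{}\sm{t}{tz}{}{\c{t}^{-1}},1)$.

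I would then invoke Lemma~\ref{L: whitform}(1) to justify that all the manipulations are legitimate: the integral converges absolutely, so the rearrangement of variables and the interchange with the $\Phi*(\cdot)$ convolution are valid, and the reduction to the big cell is exactly as in \cite[``Claim 7.1'']{1401.0198}. Since the structure of $\U_4^-$ relative to $G'\simeq\U_2^-$ is parallel to the $\Sp$/$\widetilde{\Sp}$ situation treated there, the bookkeeping of coordinates and characters is essentially the same, the only genuine differences being (a) the appearance of $E$ versus $F$ in the abelian directions, which affects which measures ($dx$ over $E$, $dy$ over $F$) show up, and (b) the twist by $\chiqe^{-1}$ from the splitting of the metaplectic cover over $G'$, which is what replaces the Weil-index factor in the symplectic case.

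The main obstacle I anticipate is purely organizational rather than conceptual: carefully pinning down the splitting-dependent normalization of $\weil_{\psiweil}^{\chiqe}$ restricted to $G'$ so that the precise cocycle contributes exactly $\chiqe^{-1}(t)$ and not some additional Weil-index or sign, and making sure the half-integral shifts in the character ($\psi(\frac12 y)$ versus $\psi(y)$) coming from our convention $\psiweil=\psi$ with the $\frac12$-rescaling (noted in the Remark after the Weil representation subsection) are propagated consistently. Once the coordinates on $V_\gamma\bs V$ are fixed and the matrix identity for $\gamma v g$ is verified, \eqref{eq: whitn1} follows by direct substitution, exactly mirroring \cite[\S7]{1401.0198}.
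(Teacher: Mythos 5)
Your plan is essentially the paper's own approach: the paper gives no independent proof of this ``Claim'' but simply asserts it by carrying out, mutatis mutandis, the direct computation of \cite[``Claim 7.1'']{1401.0198} --- unwinding \eqref{eq: defwhitform}, parametrizing $V_\gamma\bs V$, and substituting the explicit Schr\"odinger-model action of $\wevinv^{\chiqe^{-1}}$ at $\xi_1$, with the $\chiqe^{-1}(t)\abs{t}^{\frac12}$ factor coming from the splitting-dependent torus action of the Weil representation (rather than from $\modulus_P/\wgt{\cdot}^s$ bookkeeping, since the residual Levi element $\levi(\diag(t^{-1},1))$ stays inside the argument of $W_s$). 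Apart from that minor imprecision about the provenance of $\abs{t}^{\frac12}$, your proposal matches the intended (formal) argument.
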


Next we give a description of the two sides of \eqref{eq: unemain}.
For simplicity for $W\in \Ind(\WhitML(\pi))$ we set $M^*W:=(M(\frac12)W)_{-\frac12}$.

\begin{claim}\label{claim: phi} (See \cite[``Claim 7.2"]{1401.0198})
\begin{enumerate}
\item \label{claim: whitfunct}
For any $W\in\Ind(\WhitML(\pi))$ we have $\whitform(W,\Phi,e,s)=\Mint(\Phi *(W_s))$ where
\begin{equation}\label{eq: defmintone}
\Mint(W):=
\int_F W (\left(\begin{smallmatrix}1&&&\\&1&&\\&x&1&\\&&&1\end{smallmatrix}\right)
\left(\begin{smallmatrix}&1&&\\&&&1\\-1&&&\\&&1&\end{smallmatrix}\right)
\left(\begin{smallmatrix}1&-1&&\\&1&&\\&&1&1\\&&&{1}\end{smallmatrix}\right) )\psi(\frac12 x)\, dx.
\end{equation}

\item \label{claim: mainleft}
Let $W'=\whitformd(M(\frac12)\alt{W},\d{\Phi},\cdot,-\frac12)$ for some
$\alt{W}\in \Ind(\WhitMLd(\c{\pi}))$ and $\d{\Phi}\in \swrz(E)$.
Then the left-hand side of \eqref{eq: unemain} equals
\begin{equation} \label{one: beforefe}
\int_{E^\times} I^{\psi}(\Phi *(W_{\frac12});\sm{t}{}{}{1})I^{\psi^{-1}}(\d{\Phi}*M^*\alt{W};\sm{t}{}{}{1})\abs{t}^{-2}\,dt
\end{equation}
where for any function $f\in C^{\infty}(\U_4^-)$ we set
\[
I^{\psi}(f;g):=\iint_{F^2}\int_E f (\sm{g}{}{}{g^*}
\left(\begin{smallmatrix}1&&&\\&1&&\\x&y&1&\\z&\c{x}&&1\end{smallmatrix}\right)
\left(\begin{smallmatrix}&&1&\\&&&1\\-1&&&\\&-1&&\end{smallmatrix}\right) )\\\psi(\frac{y-z}{2})\ dx\ dy\ dz,\ \ g\in\GL_2(E).
\]
\end{enumerate}
\end{claim}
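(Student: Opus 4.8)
The plan is to derive both assertions of the Claim as essentially formal consequences of the defining integral \eqref{eq: defwhitform} for $\whitform$, together with the definition \eqref{eq: defM} of the intertwining operator $M(\frac12)$ and the definition \eqref{eq: localinner} of the local Shimura integral $J$, following the template of \cite[``Claim 7.2'']{1401.0198}. In the case $n=1$ the group $G=\U_4^-$ is small enough that all the relevant unipotent subgroups $V$, $V_\gamma$, $U$ are explicit $3$- or $4$-dimensional, and the Weil representation factor $\wevinv^{\chiqe^{-1}}$ restricted to $V$ acts, after the substitution $v\mapsto \Hei{v}$, essentially by translation and multiplication by $\psi$-characters on $\swrz(E)$; this is what produces the convolution operator $\Phi*(\cdot)$ and the character $\psi(\frac12 y)$ in \eqref{eq: whitn1}. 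So the first step is to insert the explicit Bruhat/Iwasawa coordinates on $G'$ and on $V_\gamma\bs V$ into \eqref{eq: defwhitform}, carry out the (formal) unfolding exactly as in the metaplectic case, and track how the central element $t$ and the $\chiqe^{-1}(t)$, $\abs{t}^{\frac12}$ factors emerge from the equivariance of $W_s$ under $\Levi$ and from the action of $\weil_{\psiweil}^{\chiqe}$ on $\swrz(E)$.

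For part \eqref{claim: whitfunct} of Claim~\ref{claim: phi}, I would specialize \eqref{eq: whitn1} (or rather the analogous computation) to $g=e$, i.e.\ $t=1$, $z=0$: the $V_\gamma\bs V$ integral collapses to the one-dimensional integral over the variable $x$ appearing in $\Mint$, the $\Phi*$ convolution survives because $\gamma$ conjugates the Heisenberg direction of $V$ into a unipotent one-parameter subgroup normalizing $P$, and the matrix identity expressing $\gamma\cdot(\text{coordinate unipotents})$ as the product of the three explicit matrices in \eqref{eq: defmintone} is a routine $4\times4$ verification. The content is precisely that $\whitform(W,\Phi,e,s)$ only sees $W_s$ through $\Phi*(W_s)$ evaluated along that one-parameter family, with the $\psi(\frac12 x)$ twist coming from $\psi_V$ together with the Weil-representation character.

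For part \eqref{claim: mainleft} I would start from the left-hand side of \eqref{eq: unemain}, namely $\stint_{N'}J(\sigma'(u)W',W,\Phi,\frac12)\psi_{N'}(u)\,du$ with $W'=\whitformd(M(\frac12)\alt W,\d\Phi,\cdot,-\frac12)$, substitute the integral definition of $J$ over $N'\bs G'$ and of $W'$ over $V_\gamma\bs V$, and collapse the stable integral over $N'$ against the $N'\bs G'$ integral. Using the Bruhat decomposition $g=\sm{t}{}{}{\c t^{-1}}\sm1x{}1\sm1{}y1$ of $G'$, the $\psi_{N'}$-equivariance of $\whitform$ in both arguments lets the $x,y$ integrations be absorbed, leaving an integral over $t\in E^\times$ with measure $\abs{t}^{-2}\,dt$; the two factors $I^\psi(\Phi*(W_{1/2});\sm t{}{}1)$ and $I^{\psi^{-1}}(\d\Phi*M^*\alt W;\sm t{}{}1)$ are then read off as what remains of $\whitform$ and of $\whitformd\circ M(\frac12)$ respectively, after recognizing $M(\frac12)W$ via \eqref{eq: defM} as an integral over $U$ that combines with the $\whitformd$ integral over $V_\gamma\bs V$ into the three-variable $x,y,z$ integral defining $I^{\psi^{-1}}$. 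The main obstacle is bookkeeping: matching the unipotent coordinates coming from $V$, from $U$ (in $M(\frac12)$), and from the Bruhat cell of $G'$ so that the $\psi(\frac{y-z}{2})$ character and the measure normalizations come out exactly as stated, and making sure the formal interchanges of integrals — which are not justified by absolute convergence here, hence the word ``heuristic'' — are organized in the same order as in \cite[\S7]{1401.0198} so that the analogy is transparent. Once the coordinates are aligned, the identity \eqref{one: beforefe} is forced.
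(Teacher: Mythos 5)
Your overall strategy -- formal unfolding of the definitions \eqref{eq: defwhitform}, \eqref{eq: localinner} following the template of \cite[\S7]{1401.0198} -- is indeed the route the paper takes (the paper gives no proof of this claim at all; it only cites \cite[``Claim 7.2'']{1401.0198} and declares the computation ``similar''). However, your plan for part \eqref{claim: mainleft} contains a structural error. You propose to obtain the $I^{\psi^{-1}}$ factor by ``recognizing $M(\frac12)W$ via \eqref{eq: defM} as an integral over $U$ that combines with the $\whitformd$ integral over $V_\gamma\bs V$''. This is not how \eqref{one: beforefe} arises, and executing it would not land on the stated formula: in \eqref{one: beforefe} the intertwining operator is a black box -- $M^*\alt{W}=(M(\frac12)\alt{W})_{-\frac12}$ enters only because $W'$ is \emph{defined} as $\whitformd(M(\frac12)\alt{W},\d{\Phi},\cdot,-\frac12)$, and the variables $x\in E$, $y,z\in F$ of $I^{\psi^{-1}}$ (and symmetrically of $I^{\psi}$) come from the big-cell formula \eqref{eq: whitn1} for the respective descent Whittaker function together with exactly one additional unipotent variable produced by the Bruhat parametrization of $N'\bs G'$ and the stable $N'$-integral; this is also where the combined character $\psi(\frac{y-z}2)$ comes from. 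If you open up $M(\frac12)$ as a $U$-integral at this stage you express everything in terms of $\alt{W}_{\frac12}$ rather than $M^*\alt{W}$, which is a different (further unfolded) identity; the unfolding of the intertwining operator is deliberately postponed to ``Claims''~\ref{claim: M12} and \ref{claim: main}, via the invariant form $L_W$, and mixing it into part \eqref{claim: mainleft} defeats the structure of the argument.

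A smaller but related slip occurs in part \eqref{claim: whitfunct}: you cannot obtain it by specializing \eqref{eq: whitn1} at ``$g=e$, i.e.\ $t=1$, $z=0$'', since that point of the big cell is the Weyl element $\diag(1,\sm{}{1}{-1}{},1)$, not the identity, and $e$ does not lie in the big cell at all. Part \eqref{claim: whitfunct} is a separate direct computation from \eqref{eq: defwhitform} at $g=e$: the translation part of the Weil action on $\Phi$ evaluated at $\xi_1$ produces the convolution $\Phi*(W_s)$, the remaining one-parameter direction of $V_\gamma\bs V$ gives the $x$-integral with $\psi(\frac12 x)$, and a $4\times4$ matrix identity identifies the argument with the product of the three matrices in \eqref{eq: defmintone}. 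You hedge towards this (``or rather the analogous computation''), and that hedged version is the correct one; but as written, the specialization step would fail. With these two points repaired, the rest of your bookkeeping plan (Bruhat decomposition of $N'\bs G'$, equivariance, the $\abs{t}^{-2}$ measure factor) is consistent with the intended computation.
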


From ``Claim"~\ref{claim: phi}, we are left to show the identity
\begin{equation}\label{goal: n=1}
\int_{E^\times} I^{\psi}(W_{\frac12};\sm{t}{}{}{1})I^{\psi^{-1}}(M^*\alt{W};\sm{t}{}{}{1})\abs{t}^{-2}\,dt=
\Mint(M^*W)\Mintd(M^*\alt{W})
\end{equation}
for any $(W,\alt{W})\in \Ind(\WhitML(\pi))\times\Ind(\WhitMLd(\c{\pi}))$.

For any $g\in G$ the function
\[
m\mapsto W_s(\left(\begin{smallmatrix}m&\\&m^*\end{smallmatrix}\right)g)\abs{\det m}^{-(s+1)}
\]
belongs to $\WhitM(\pi)$ and therefore the integral in \eqref{goal: n=1} over $t$ (with the rest of the variables fixed)
has the form
\[
\int_{E^\times} W^1(\sm{t}{}{}{1})W^2(\sm{t}{}{}{1})\ dt
\]
where $W^1\in\WhitM(\pi)$ and $W^2\in\WhitMd(\d\pi)$. The key
observation is that the above integral defines a $\GL_2(E)$-invariant
bilinear form on $\WhitM(\pi)\times\WhitMd(\d\pi)$, and thus
\begin{equation}\label{eq: fen=1}
\int_{E^\times} W^1(\sm{t}{}{}{1})W^2(\sm{t}{}{}{1})\ dt=\int_{E^\times} W^1(\sm{t}{}{}{1}b) W^2(\sm{t}{}{}{1}b)\ dt
\end{equation}
for any $b\in \GL_2(E)$.
To show \eqref{goal: n=1} we only need to prove
\begin{equation}\label{goal2: n=1}
\int_{E^\times} I^{\psi}(W_{\frac12};\sm{t}{}{}{1}b)I^{\psi^{-1}}(M^*\alt{W};\sm{t}{}{}{1}b)\abs{t}^{-2}\abs{\det b}^{-2}\,dt=
\Mint(M^*W)\Mintd(M^*\alt{W})
\end{equation}
for a well-chosen $b\in \GL_2(E)$.

Fix $\rr\in E^{\times}$ such that $\c{\rr}=-\rr$. 
Let $b=\sm{\rr}{}{}{1}\sm{-\frac12}{\frac12}{1}{1}$. 
Making a change of variables, we get
\begin{equation} \label{eq: I0afterb}
I^{\psi}(f;\sm{t}{}{}{1}b)=\abs{\rr}^2
\iint_{F^2}\int_E f(\left(\begin{smallmatrix}t &&&\\&1&&\\&&1&\\&&&\c{t}^{-1}\end{smallmatrix}\right)
\left(\begin{smallmatrix}1&&&\\&1&&\\ x&y&1&\\z &\c{x}&&1\end{smallmatrix}\right)\sm b{}{}{b^*}
\left(\begin{smallmatrix}&&1&\\&&&1\\-1&&&\\&-1&&\end{smallmatrix}\right))\\\psi(\rr x)\ dx\ dy\ dz.
\end{equation}

Since $\pi\in\Irr_{\ut,\gen}\GL_2(E)$, the linear form $w\mapsto \int_{F^\times}W(\sm {t\tau}{}{}1)\ dt$ on $\whit^{\psi_{N_M}}(\pi)$ is
$\GL_2(F)$-invariant. (This is true more generally -- see \cite{MR2787356}
in the $p$-adic case and \cite{1212.6436} in the archimedean case.)
We will use this to define a nontrivial $H$-invariant linear form on the image of $M^*$
(recall that $H=\Sp_2(F)=\U_4^-\cap \GL_4(F)$).
Namely, for any $W\in\Ind(\WhitML(\pi))$ define (see \S\ref{sec: inflate})
\begin{equation}\label{eq: defLWn=1}
L_W(g)=\int_{F^\times}\iint_{F^3}W_\frac12 (
\left(\begin{smallmatrix}t\rr&&&\\&1&&\\&&1&\\&&&\c{t\rr}^{-1}\end{smallmatrix}\right)
\left(\begin{smallmatrix}1&&&\\&1&&\\x&y&1&\\z&x&&1\end{smallmatrix}\right)g )\ dz\ dy\ dx\abs{t}^{-\frac32}\ dt.
\end{equation}
Then $L_W$ is left $H$-invariant.
Note that $L_W$ has the extra equivariance property
$$L_W(\diag(z,z,\c{z}^{-1},\c{z}^{-1})g)=\omega_\pi(z)L_W(g).$$ 
Using Fourier inversion, we can express $M^*W$ in terms of $L_W$ as follows. (See \cite[``Claim 7.4"]{1401.0198})

\begin{claim} \label{claim: M12}
For any $W\in\Ind(\WhitML(\pi))$, 
\[
M^*W(g)=\abs{\rr}^{-\frac32}\iint_{F^2} L_W(\left(\begin{smallmatrix}1&&&\\&\c{\rr}&&\\&&\rr^{-1}&\\&&&1\end{smallmatrix}\right)
\left(\begin{smallmatrix}1&s&r&\\&1&&r\\&&1&-s\\&&&1\end{smallmatrix}\right)g)\psi(s)\ dr\ ds.
\]
\end{claim}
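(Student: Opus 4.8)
The plan is to carry over the argument of \cite[``Claim~7.4'']{1401.0198}; as everywhere in \S\ref{sec: n=1} the computations below are performed formally. Unwinding \eqref{eq: defM} and the definition of $M^*$, one has for $W\in\Ind(\WhitML(\pi))$ and $g\in\U_4^-$
\[
M^*W(g)=\int_U W_{\frac12}(w_U u g)\,du,
\]
where $U$ is the Siegel unipotent radical of $G=\U_4^-$ and $w_U=\sm{}{I_2}{-I_2}{}$. The substitution $u\mapsto w_U^{-1}\overline{u}w_U$ turns this into an integral over the opposite (abelian) unipotent $\overline{U}$, namely $M^*W(g)=\int_{\overline{U}}W_{\frac12}(\overline{u}\,w_U g)\,d\overline{u}$. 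Splitting an element of $\overline{U}$ into ``real'' and ``imaginary'' parts relative to $E=F\oplus F\rr$ factors $\overline{U}=(\overline{U}\cap H)\cdot A$, with $\overline{U}\cap H=\{\left(\begin{smallmatrix}1&&&\\&1&&\\x&y&1&\\z&x&&1\end{smallmatrix}\right):x,y,z\in F\}$ the three-dimensional ``real'' part --- the lower unipotents appearing in \eqref{eq: defLWn=1} --- and $A=\{\left(\begin{smallmatrix}1&&&\\&1&&\\\rr u&&1&\\&-\rr u&&1\end{smallmatrix}\right):u\in F\}$ the one-parameter ``imaginary'' complement. Thus, up to an explicit power of $\abs\rr$ that records the comparison of the self-dual Haar measures of $E$ and $F$, $M^*W(g)$ is the iterated integral of $W_{\frac12}$ over $\overline{U}\cap H$ and over $A$, evaluated at $w_U g$.

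The second step is to substitute the definition \eqref{eq: defLWn=1} of $L_W$ into the right-hand side of the claim. This expresses it as a multiple integral of $W_{\frac12}$ over $t\in F^\times$, over the three variables $(x,y,z)\in F^3$ internal to $L_W$, and over $(r,s)\in F^2$, weighted by $\abs t^{-\frac32}\psi(s)$ and an overall constant $\abs\rr^{-\frac32}$, with $W_{\frac12}$ evaluated at
\[
\diag(t\rr,1,1,\c{t\rr}^{-1})\left(\begin{smallmatrix}1&&&\\&1&&\\x&y&1&\\z&x&&1\end{smallmatrix}\right)\diag(1,\c{\rr},\rr^{-1},1)\left(\begin{smallmatrix}1&s&r&\\&1&&r\\&&1&-s\\&&&1\end{smallmatrix}\right)g.
\]
The crux is a matrix identity moving the last unipotent factor to the left, past $\diag(1,\c{\rr},\rr^{-1},1)$, past the lower unipotent, and past the torus element: one checks that it decomposes as a lower unipotent absorbed into the $(\overline{U}\cap H)$- and $t$-integrations, a Siegel-unipotent factor which after conjugation again lies in $\overline{U}\cap H$ and is likewise absorbed, and an element of $N_\Levi$, on which the section $W_{\frac12}$ is $\psi_{N_\Levi}$-equivariant --- producing the value of $\psi$ at an affine function of the variables.

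Once this is in place, the integral $\int_F(\cdots)\psi(s)\,ds$ is Fourier inversion on $F$ and pins down one variable, while the remaining integrations over $t$ and $r$ recombine --- using precisely that $w\mapsto\int_{F^\times}W(\sm{t\rr}{}{}1)\,dt$ is a $\GL_2(F)$-invariant functional on $\WhitML(\pi)$ (\cite{MR2787356} in the $p$-adic case, \cite{1212.6436} in the archimedean case) --- into the single integral over the ``imaginary'' direction $A$ still missing from the first step. Reassembling, one recovers the iterated $(\overline{U}\cap H)\times A$ integral of the first step, i.e.\ $M^*W(g)$, and the powers of $\abs\rr$ accumulated along the way combine to $\abs\rr^{-\frac32}$. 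I expect the one genuinely delicate point to be the matrix bookkeeping of the second step --- arranging that, after conjugation, the $N_\Levi$-factor, the $A$-direction and the $t$-dependence separate as described --- together with careful tracking of the Haar-measure normalizations so that the final constant comes out exactly $\abs\rr^{-\frac32}$. Convergence is not an issue, since, as throughout this section, the manipulations are formal.
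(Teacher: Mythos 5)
Your overall strategy -- purely formal manipulation, unfolding $M^*W(g)=\int_U W_{\frac12}(w_Uug)\,du=\int_{\bar U}W_{\frac12}(\bar u\,w_Ug)\,d\bar u$, substituting the definition \eqref{eq: defLWn=1} of $L_W$, and finishing by Fourier inversion -- is the same kind of argument the paper intends (it gives no proof here, referring to \cite[``Claim~7.4'']{1401.0198}). But the step you yourself call the crux is only asserted, and as described it cannot work. Moving the factor $\left(\begin{smallmatrix}1&s&r&\\&1&&r\\&&1&-s\\&&&1\end{smallmatrix}\right)$ to the left is harmless only for its $N_\Levi$-part $\levi(\sm1s{}1)$, which normalizes $\bar U$ and the torus and eventually yields a character $\psi(\pm ts)$ from the equivariance of $W_{\frac12}$; the Siegel part $\left(\begin{smallmatrix}1&&r&\\&1&&r\\&&1&\\&&&1\end{smallmatrix}\right)$ is an \emph{upper} unipotent and cannot be pushed past the lower unipotents in \eqref{eq: defLWn=1} by ``absorption''. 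One needs either a Bruhat-type decomposition or -- the natural device here -- the left $H$-invariance of $L_W$ applied to $w_U\in H$, which flips this factor into the lower Siegel unipotent $w_U u w_U^{-1}\in\bar U$ and, crucially, is the only visible mechanism that makes the Weyl element $w_U$, present in your first-step formula for $M^*W$, appear on the $L_W$-side. Your outline produces no Weyl element at all, so the two sides cannot be matched as you describe; this is the missing idea, not a detail of bookkeeping.

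Relatedly, the final accounting is inconsistent. The right-hand side carries six $F$-integrations ($t;x,y,z;s,r$) and the left-hand side four ($\dim_F\bar U=4$), so exactly one Fourier-inversion collapse (the $\psi(s)\,ds$-integral against one internal variable) can occur and exactly four variables must survive and be identified with the coordinates of $\bar U=(\bar U\cap H)\cdot A$. You keep all of $x,y,z$ for the $\bar U\cap H$-part and then ask ``the remaining integrations over $t$ and $r$'' to produce the one-dimensional $A$-integral -- one variable too many -- and the renewed appeal to the $\GL_2(F)$-invariance of $W\mapsto\int_{F^\times}W(\sm{t\rr}{}{}{1})\,dt$ at that point has no identifiable role: in the paper that invariance only serves to make $L_W$ left $H$-invariant, and it is this $H$-invariance (e.g.\ under $w_U$) that should be exploited. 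Note also that pushing the internal lower unipotent of \eqref{eq: defLWn=1} to the right of $\diag(1,\c{\rr},\rr^{-1},1)$ turns its $(3,1)$-entry into $\rr x$, i.e.\ off the $F$-rational slice, so the clean split ``$x,y,z\mapsto\bar U\cap H$, with only $A$ missing'' is not what the conjugations actually produce. Since the content of the claim is precisely this matrix/Fourier computation, a proof must carry it out; your proposal gestures at it with a structure that does not match what happens.
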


Applying this claim together with Fourier inversion, we get:
\begin{claim}\label{claim: main}
For any $W\in\Ind(\WhitML(\pi))$ and $\phi\in C^\infty(E^\times)$ we have
\begin{equation}\label{one: goal}
\int_{E^\times} I^{\psi}(M^*W;\sm{t}{}{}{1}b)\phi(t)\ dt= \abs{\rr} \Mint(M^*W)\int_{F^\times} \abs{t}^{\frac12}\phi(t\rr)\ dt
\end{equation}
and
\begin{equation}\label{one: factor}
\int_{F^\times}I^\psi(W_\frac12;\sm{t\rr}{}{}{1}b)\abs{t}^{-\frac32}\ dt=\abs{\rr}^{3}\omega_\pi(\rr)\Mint(M^*W).
\end{equation}
Meanwhile,
$$
\Mint(M^*W)=\abs{\rr}^{-\frac32}\omega_\pi(\rr)\int_F L_W(\left(\begin{smallmatrix}1&& &\\&1&& \\
x/\rr&&1&\\&x/\c{\rr}&&1\end{smallmatrix}\right)
\sm b{}{}{b^*}
\left(\begin{smallmatrix}&&1&\\&&&1\\-1&&&\\&-1&&\end{smallmatrix}\right))\psi( x) \ dx.
$$
\end{claim}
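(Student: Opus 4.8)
The plan is to derive all three identities from ``Claim''~\ref{claim: M12}, which recovers $M^*W$ from $L_W$ by a double Fourier integral, together with the definition \eqref{eq: defLWn=1} of $L_W$, by a sequence of formal changes of variables and Fourier inversions, exactly along the lines of \cite[\S7]{1401.0198}.

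I would begin with the last assertion, the formula for $\Mint(M^*W)$. Substituting the expression of ``Claim''~\ref{claim: M12} for $M^*W$ directly into the definition \eqref{eq: defmintone} of $\Mint$ yields an iterated integral of $L_W$ over the variable $x$ of \eqref{eq: defmintone} and the variables $r,s$ of ``Claim''~\ref{claim: M12}. Using the left $\Sp_2(F)$-invariance of $L_W$ to move the unipotent parts across, and the equivariance of $L_W$ under the relevant torus (which is where the factor $\omega_\pi(\rr)$ comes from, up to a power of $\abs{\rr}$ from the modulus), and then applying Fourier inversion --- here the character $\psi(\frac12 x)$ of \eqref{eq: defmintone} pairs against the $\psi(s)$ of ``Claim''~\ref{claim: M12} --- the $r,s$-integrations collapse, and a final rescaling $x\mapsto x/\rr$ brings the answer to the stated form.

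For \eqref{one: goal} I would write $I^\psi(M^*W;\sm t{}{}1 b)$ in the form \eqref{eq: I0afterb}, which features the additive character $\psi(\rr x)$, and again insert ``Claim''~\ref{claim: M12} for $M^*W$. The integration variables $x,y,z$ of $I^\psi$ and $r,s$ of ``Claim''~\ref{claim: M12} can be disentangled using the $\Sp_2(F)$-invariance of $L_W$, and Fourier inversion in all of them but the torus variable $t$ leaves precisely the integral isolated in the previous step, namely $\Mint(M^*W)$, while the remaining $t$-integral becomes $\int_{F^\times}\abs{t}^{\frac12}\phi(t\rr)\,dt$ after the substitution dictated by the invariance, with the Jacobian contributing the factor $\abs{\rr}$. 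Identity \eqref{one: factor} is then proved by the same unfolding applied to $W_\frac12$ itself: by \eqref{eq: defLWn=1} the left-hand side of \eqref{one: factor} is already built from an integral of $W_\frac12$ over a torus and a unipotent orbit, and ``Claim''~\ref{claim: M12} converts it back into $\Mint(M^*W)$ by Fourier inversion, the powers of $\abs{\rr}$ and the value $\omega_\pi(\rr)$ being accounted for by the weight $\abs{t}^{-\frac32}$ and the central-character twist built into the definition of $L_W$.

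The main obstacle is entirely bookkeeping: one must reconcile the additive characters in play --- $\psi(\frac12 y)$ from the big-cell formula \eqref{eq: whitn1}, $\psi(\rr x)$ from \eqref{eq: I0afterb}, and $\psi(s)$ from ``Claim''~\ref{claim: M12} --- and keep exact track of the cumulative powers of $\abs{\rr}$, the torus factors $\abs{t}^{\pm\frac12}$ and $\abs{t}^{-\frac32}$, and the central-character values $\omega_\pi(\rr)$, so that the three right-hand sides emerge with precisely the stated constants. The structural ingredients are all available --- the left $\Sp_2(F)$-invariance of $L_W$ (which rests on the $\GL_2(F)$-invariance of $w\mapsto\int_{F^\times}W(\sm{t\rr}{}{}1)\,dt$ on the Whittaker model of $\pi\in\Irr_{\gen,\ut}\GL_2(E)$, already used in defining $L_W$) and the Fourier-inversion recovery of $M^*W$ from $L_W$ provided by ``Claim''~\ref{claim: M12} --- so what remains is to carry out these formal manipulations in parallel with \cite[\S7]{1401.0198}.
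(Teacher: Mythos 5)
Your plan coincides with the paper's intended argument: the paper gives no proof of this claim at all, remarking only that "the argument \dots is similar to that of \cite[``Claim'' 7.5, 7.6]{1401.0198} and therefore will be skipped", and that omitted argument is precisely the chain you outline --- insert ``Claim''~\ref{claim: M12} into \eqref{eq: defmintone} and \eqref{eq: I0afterb}, use the left $H$-invariance and the torus/central equivariance of $L_W$, and collapse the auxiliary variables by Fourier inversion while tracking the characters, the powers of $\abs{\rr}$, and $\omega_\pi(\rr)$. Since the whole section is explicitly a formal (heuristic) manipulation, your sketch is at the same level of detail as the paper's treatment and takes essentially the same route.
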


The argument for the above claim is similar to that of \cite["Claim" 7.5,7.6]{1401.0198} and therefore will be skipped.
The above claim implies \eqref{goal2: n=1}. This concludes the heuristic argument for the conjecture $c_\pi=\omega_\pi(\rr)=\omega'_\pi(-1)$.

Finally in the case $E=F\oplus F$, we can follow the above argument and replace $\rr$ by the element $(1,-1)$, $\abs{\rr}$ by $1$.
In this case $\pi$ can be identified with a pair $(\pi_1,\d{\pi_1})$, then $\omega'_\pi$ is the character on the group
$\{(a,a^{-1})\}$ given by $\omega'(a,a^{-1})=\omega_{\pi_1}(a)$. Again we get $c_\pi=\omega_{\pi_1}(-1)$ and thus the conjectural identity.

\part{The case $\uno$}

\section{Gelfand--Graev coefficients and descent}
\subsection{Notation} \label{unosec: notation}
We fix the notation in the local setting. Let $G=\biguno=\{g\in\GL_{4n+2}(E):\, \c{\,^tg}w_{4n+2}g=w_{4n+2}\}$. Thus $\GLnn=\GL_{2n+1}$.
Let $G'\simeq\uno\subset G$ consisting of elements fixing $e_1,\ldots,e_n,e_{-1},\ldots,e_{-n}$ and $e_{2n+1}+e_{-1-2n}$.
(Note that we take $\alpha=-2$ in the notation of \cite[(3.40)]{MR2848523}.)

For $x\in \Mat_{l,m}$, let $\startran{x}$ be the matrix in $\Mat_{m,l}(E)$ given by $\startran{x}=w_m \c{\,^tx} w_l$.
Set $\antisym_n=\{x\in\Mat_{n,n}:\startran{x}=-x\}$.

Let $\GLn=\GL_n$ and let $\levi'$ be the embedding of $\GLn$ in $G'$ given by $\levi'(m)=\diag(I_n,m,I_2,m^*, I_n)$.
Let $U'$ be the Siegel unipotent subgroup of $G'$. Then $U'=U'_1\ltimes U'_0$ with
\[
U'_0=\{\diag(I_n, \begin{pmatrix} I_n&&u\\&I_2&\\&&I_n\end{pmatrix}, I_n):u\in\antisym_n\},
\]
and
\[
U'_1=\{\diag( I_n,\begin{pmatrix} I_n&(v,-v)&-2v\startran{v}\\&I_2&\left(\startran{v}\atop {-\startran{v}}\right)\\&&I_n
\end{pmatrix}, I_n):v\in E^n\}.
\]

Let $N$ be the standard maximal unipotent subgroup of $G$ and
let $N'=N\cap G'$, a maximal unipotent subgroup of $G'$. Then $N'=\levi'(N'_{\GLn})\ltimes U'$ where $N'_{\GLn}$ is the standard maximal unipotent subgroup of $\GLn$.
Let $V$ be the unipotent radical of the standard parabolic subgroup of $G$ with Levi $\GL_1^n\times\U^+_{2n+2}$.

Let $N_{\GLnn}$ be the standard maximal unipotent subgroup of $\GLnn$.
We fix a non-degenerate character $\psi_{N_\GLnn}$ of $N_\GLnn$ and set $\psiweil(x)=\psi_{N_\GLnn}(I_{2n+1}+x\one_{n+1,n+2}^{2n+1})$
and $\psiweil'(x)=\psi_{N_\GLnn}(I_{2n+1}+x\one_{n,n+1}^{2n+1})$.
We assume that $\psiweil(x)=\psiweil(\c{x})^{-1}$ and $\psiweil'(x)=\psiweil'(\c{x})^{-1}$.
As in the even case, the results stated below are independent of the choice of $\psi_{N_\GLnn}$.
The character $\psi_{N_\GLnn}$ will determine characters of several other unipotent group as follows:
\begin{itemize}
\item $\psi_{N_\Levi}$ is the non-degenerate character of $N_\Levi=\levi(N_\GLnn)$ such that $\psi_{N_\Levi}(\levi(u))=\psi_{N_\GLnn}(u)$.

\item $\psi_{N'_\GLn}$ is the non-degenerate character of $N'_{\GLn}$ given by $\psi_{N'_\GLn}(u)=\psi_{N_\GLnn}(\diag(u,I_{n+1}))$.

\item $\psi_{N'_{\Levi'}}$ is the non-degenerate character of $N'_{\Levi'}=\levi'(N'_{\GLn})$ such that $\psi_{N'_{\Levi'}}(\levi'(u))=\psi_{N'_{\GLn}}(u)$.

\item $\psi_{U'}$ is the character on $U'$ given by $\psi_{U'}(u)=\psiweil'({u_{2n,2n+1}})$ if $u\in U'$.

\item $\psi_{N'}$  of  $N'$ is the non-degenerate character
$\psi_{N'}(nu)=\psi_{N'_{\Levi'}}(n)\psi_{U'}(u)$ with $n\in N'_{\Levi'}$ and $u\in U'$.

\item $N=N_\Levi\ltimes U$ where $U$ is the Siegel unipotent of $G$; $\psi_N$ is the \emph{degenerate} character on $N$ given by
$\psi_N(uv)=\psi_{N_\Levi}(u)$ for any $u\in N_\Levi$ and $v\in U$.

\item An element in $V$ can be written as $vu$ where $u$ fixes $e_1,\ldots e_n$, $v$ fixes $e_{n+1},e_{n+2},\ldots,e_{-n-1}$.  Set
$$\psi_V(vu)=\psi_{N_\Levi}(w_U v w_U^{-1})\psiweil^{-1}(\c{u_{n,2n+1}+u_{n,2n+2}}).$$
\end{itemize}

For convenience, we will fix a non-trivial character $\psi$ of $E$, and further assume
$\psi(x)=\psi(\c{x})^{-1}$. Set
\[
\psi_{N_\GLnn}(u)=\psi(u_{1,2}+\dots+u_{2n-1,2n}).
\]
Thus $\psiweil=\psiweil'=\psi$.

The notation introduced has an obvious global counterpart.

\subsection{Descent map}
We now go to the global setting.
Let $\Ucusp \GLnn$  be as defined in \S\ref{unesec: desc}. For $\pi\in \Ucusp \GLnn$ and $\varphi\in \AF(\pi)$, the associated
Eisenstein series $\eisen(\varphi,s)$ has a pole of order $k$ at $s=\frac12$.
Let $\reseisen\varphi=\lim_{s\rightarrow\frac12}(s-\frac12)^k\eisen(\varphi,s)$.
For an automorphic form $f$ on $G(\A)$, let $\GG(f)$ be the Gelfand--Graev coefficient (a function on $G'(\A)$)
\begin{equation} \label{def: GG}
\GG(f)(g)=\int_{V(F)\bs V(\A)}f(vg)\psi_V^{-1}(v)\,dv,\ \ g\in G'(\A).
\end{equation}
By definition, the \emph{descent} of $\pi$ (with respect to $\psi_{N_\GLnn}$) is the space
$\desc_{\psi_{N_\GLnn}}(\pi)$ generated by $\GG(\reseisen\varphi)$ with $\varphi\in\AF(\pi)$. It is known that $\desc_{\psi_{N_\GLnn}}(\pi)$ is
cuspidal, multiplicity free and generic (\cite[Theorem~3.1]{MR2848523}).  As in the case of $\une$, we make the following assumption:
\begin{assumption}\label{unoassume}
When $\pi \in\Ucusp\GLnn$, $\desc_{\psi_{N_\GLnn}}(\pi)$ is irreducible.
\end{assumption}
It then follows from \cite[Theorem~11.2]{MR2848523} that
\begin{proposition}\label{prop: uodd}
With the above assumption, $\pi\mapsto \sigma=\desc_{\psi_{N_\GLnn}}(\pi)$ defines a bijection between $\Ucusp\GLnn$ and $\Cusp_{\gen} G'$
the set of generic irreducible cuspidal automorphic representations of $G'$. Moreover $\d\sigma$ weakly lifts to $\c{\pi}$.
\end{proposition}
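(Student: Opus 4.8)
The plan is to deduce the statement from \cite[Theorem~11.2]{MR2848523}, exactly as Proposition~\ref{prop: ueven} is obtained in the even case. First recall that by \cite[Theorem~3.1]{MR2848523} the space $\desc_{\psi_{N_\GLnn}}(\pi)$ is a non-zero cuspidal, multiplicity free and generic automorphic representation of $G'(\A)$; invoking Working Assumption~\ref{unoassume} it is moreover irreducible, hence an element of $\Cusp_{\gen} G'$. Thus $\pi\mapsto\sigma=\desc_{\psi_{N_\GLnn}}(\pi)$ is a well-defined map from $\Ucusp\GLnn$ into $\Cusp_{\gen} G'$, and it remains to establish surjectivity, injectivity and the last assertion.

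For surjectivity, let $\sigma\in\Cusp_{\gen} G'$. Its weak base change lift to $\GL_{2n+1}(\A_E)$ exists by \cite{MR2767514}, and by \cite[Ch.~11]{MR2848523} it is an isobaric sum $\pi_1\boxplus\dots\boxplus\pi_k$ of pairwise inequivalent cuspidal representations $\pi_i$ of $\GL_{n_i}(\A_E)$ for which $L^S(s,\pi_i,\As^+)$ has a pole at $s=1$; by Proposition~\ref{P: flicker} each $\pi_i$ is of unitary type, so $\pi:=\pi_1\times\dots\times\pi_k\in\Ucusp\GLnn$, and \cite[Theorem~11.2]{MR2848523} yields $\desc_{\psi_{N_\GLnn}}(\pi)=\sigma$. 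Injectivity follows since $\pi$ is recovered from $\sigma$ as this lift, which is uniquely determined by $\sigma$ through strong multiplicity one for isobaric representations of $\GL_{2n+1}(\A_E)$. Finally, the unramified computation underlying \cite[Theorem~11.2]{MR2848523} shows that $\sigma$ weakly lifts to $\pi$; since each $\pi_i$ satisfies $\d{\pi_i}\cong\c{\pi_i}$ (so that $\d\pi\cong\c\pi$ as isobaric representations, as recorded in \S\ref{unesec: desc}), this is equivalent to the assertion that $\d\sigma$ weakly lifts to $\c\pi$.

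Since all the substance is contained in \cite[Theorem~11.2]{MR2848523}, the main obstacle is purely the bookkeeping of normalizations. One has to verify that the characters $\psi_{N_\GLnn}$, $\psi_V$ and $\psi_{N'}$ fixed in \S\ref{unosec: notation} correspond, up to the change of conventions relative to \cite{MR2848523} indicated there, to the data implicit in \cite[Theorem~11.2]{MR2848523}, so that the Gelfand--Graev integral \eqref{def: GG} indeed produces a $\psi_{N'}$-generic representation and the image lies in $\Cusp_{\gen} G'$; and one has to track the contragredient and the Galois conjugation in the last assertion exactly as in the even case, the only difference being that the twist by $\chiqe$ occurring there is an artifact of the Fourier--Jacobi construction and has no analogue for the Gelfand--Graev coefficient employed here. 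I expect this to be entirely routine once the normalizations are aligned.
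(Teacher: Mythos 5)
Your proposal is correct and follows exactly the route the paper takes: the paper gives no argument beyond citing \cite[Theorem~11.2]{MR2848523} together with Working Assumption~\ref{unoassume} (plus the cuspidality/genericity from \cite[Theorem~3.1]{MR2848523}), and your write-up simply makes that derivation explicit, including the standard use of \cite{MR2767514}, Proposition~\ref{P: flicker} and strong multiplicity one for the surjectivity/injectivity bookkeeping. Your remark that the $\chiqe$-twist in the even case is an artifact of the Fourier--Jacobi/Weil-representation splitting, absent for the Gelfand--Graev coefficient, is also consistent with the paper.
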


\section{Reduction to a local conjecture}

\subsection{Local descent}
Locally, for any $W\in\Ind(\WhitML(\pi))$  define a function on $G'$:
\begin{equation}\label{unoeq: defwhitform}
\whitformo(W,g,s)=\int_{V_\gamma\bs V} W_s(\gamma v  g)\psi_V^{-1}(v)\,dv,\,\,\,g\in G'
\end{equation}
where $\gamma=\left(\begin{smallmatrix}&I_{n+1}&&\\&&&I_n\\I_n&&&\\&&I_{n+1}&\end{smallmatrix}\right)\in G$,
$V_\gamma=V\cap\gamma^{-1}N\gamma$.
The basic properties of $\whitformo$ are summarized in the following lemma whose proof we once again leave out.
(It is very close to the argument in \cite[\S4]{1401.0198}.)

\begin{lemma} \label{L: whitformGG}
\begin{enumerate}
\item The integral \eqref{unoeq: defwhitform} is well defined and absolutely convergent uniformly
for $s$ and $g$ in compact sets.
Thus $\whitformo(W, g,s)$ is entire in $s$ and smooth in $  g$.
In the non-archimedean case the integrand is compactly supported on $V_\gamma\bs V$.
\item For any $W\in\Ind(\WhitML(\pi))$  and $s\in\C$, the function
$ g\mapsto\whitformo(W,  g,s)$ is smooth and $(N',\psi_{N'})$-equivariant.
\item For any $ g\in G'$ we have
\begin{equation} \label{eq: whitform equivarianceGG}
\whitformo(I(s,x)W, g,s)=\whitformo(W,g x,s).
\end{equation}
\item \label{part: unramwhitformGG} Suppose that $E/F$ is $p$-adic and unramified, $\pi$ is unramified, $\psi$ is unramified and
$W^\circ \in\Ind(\WhitML(\pi))$ is the standard unramified vector. Then $\whitformo(W^\circ,e,s)\equiv1$ (assuming $\vol(V\cap K)=\vol(V_\gamma\cap K)=1$).
\end{enumerate}
\end{lemma}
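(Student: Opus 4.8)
The plan is to establish the four assertions of Lemma~\ref{L: whitformGG} by following the same line of argument as in \cite[\S4]{1401.0198}, keeping track of the modifications forced by the odd-rank setting (the group $\biguno$, the character $\psi_V$ which now involves the Weil-type factor $\psiweil^{-1}(\c{u_{n,2n+1}+u_{n,2n+2}})$, and the new $\gamma$). First I would analyze the orbit of $\gamma$ under the relevant parabolic, in order to describe $V_\gamma = V\cap\gamma^{-1}N\gamma$ explicitly and to identify a complementary subgroup $V^\gamma$ of $V_\gamma$ in $V$ so that the integral \eqref{unoeq: defwhitform} is really an integral over $V^\gamma$. Having this, for part~(1) I would argue convergence: since $W_s$ restricted to the Siegel Levi lies in a Whittaker model of $\pi$, the integrand decays rapidly in the ``generic'' directions of $V^\gamma$ and is constant (hence, in the $p$-adic case, compactly supported after accounting for the character) in the remaining directions. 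The uniformity in $s$ and $g$ over compacts is automatic from the same estimates, giving entirety in $s$ and smoothness in $g$.

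For part~(2), the $(N',\psi_{N'})$-equivariance, I would verify the compatibility of characters: one needs $\psi_{N'_{\Levi'}}(u)=\psi_{N_\Levi}(\gamma u\gamma^{-1})$ for $u\in N'_{\Levi'}$ and the matching of $\psi_{U'}$ (and the $U'_0$, $U'_1$ pieces) against $\psi_N$ and $\psi_V$ after conjugating through $\gamma$. The point is that left-translating $W_s(\gamma v g)$ by $n'\in N'$ and then changing variables in $v$ produces exactly the product $\psi_{N'}(n')\whitformo(W,g,s)$; the degenerate character $\psi_N$ on $N$ and the extra Weil factor in $\psi_V$ are precisely designed so that the $U'_1$-integration (a partial Fourier transform) transforms correctly. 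Part~(3) is the easiest: $I(s,x)$ acts by right translation $W_s(\cdot\, x)$, so $\whitformo(I(s,x)W,g,s)=\int W_s(\gamma v g x)\psi_V^{-1}(v)\,dv=\whitformo(W,gx,s)$, an immediate substitution.

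For part~(4), the unramified computation, I would take $W^\circ$ the normalized spherical vector, restrict the integral \eqref{unoeq: defwhitform} to $V_\gamma\bs V$, and observe that in the unramified situation the integrand is supported on $(V_\gamma\bs V)\cap K$ (one checks $\gamma v\in N\cdot(\text{something in }K)$ forces $v$ into this set, using that $W^\circ$ is supported on $N_\Levi\cdot T\cdot K$ with the appropriate behavior), where it equals $1$; with the normalization $\vol(V\cap K)=\vol(V_\gamma\cap K)=1$ one gets $\whitformo(W^\circ,e,s)\equiv1$. I expect the main obstacle to be part~(2): carefully matching all the characters through the conjugation by $\gamma$ — in particular handling the non-abelian structure $U'=U'_1\ltimes U'_0$ and the Weil-representation-flavored term in $\psi_V$ — is where the genuine bookkeeping lies, whereas convergence and the spherical computation are routine adaptations of \cite{1401.0198}. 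Since the excerpt explicitly says the proof is ``very close to the argument in \cite[\S4]{1401.0198}'' and will be omitted, in practice I would simply indicate these four points and refer to [ibid.] for the details.
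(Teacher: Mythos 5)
Your proposal matches the paper's treatment: the paper gives no written proof of this lemma, stating only that it is "very close to the argument in \cite[\S4]{1401.0198}", and your outline (explicating $V_\gamma$ and a complement, convergence via Whittaker-function estimates with compact support of the integrand in the non-archimedean case, character matching through conjugation by $\gamma$ for the $(N',\psi_{N'})$-equivariance, the trivial substitution for part (3), and the support analysis for the unramified computation) is precisely the adaptation of that reference the authors have in mind. Note only that in this Gelfand--Graev setting there is no Weil representation, so the extra term in $\psi_V$ is just an additive character twist rather than a genuinely ``Weil-representation'' ingredient; this does not affect the correctness of your plan.
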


Let $\pi\in\Irr_{\gen,\ut}\GLnn$.
By Proposition \ref{prop: M1/2} $M(s)$ is holomorphic at $s=\frac12$.
Denote by $\des(\pi)$ the space of Whittaker functions on $G'$ generated by $\whitformo(M(\frac12)W,\cdot,-\frac12)$,
$W\in \Ind(\WhitML(\pi))$.
As in the $\une$ case,  $\des(\pi)\ne0$. We call $\des(\pi)$ the (explicit) descent of $\pi$.

\subsection{Whittaker function of descent}
We have the following analogue of Proposition~\ref{prop: whitdesc}:

\begin{proposition}(reformulation of \cite[Theorem 9.5, part (1)]{MR2848523}) \label{thm: FCdescentGG}
Let $\pi\in \Ucusp\GLnn$ and $\varphi\in \AF(\pi)$, we have
\[
\whit^{\psi_{N'}}(\GG(\reseisen\varphi),g)=\vol(V_\gamma(F)\bs V_\gamma(\A))
\int_{V_\gamma(\A)\bs V(\A)}\whit^{\psi_N}(\reseisen\varphi,\gamma v g)
\psi_V^{-1}(v) \,dv, \ \ g\in G'(\A)
\]
and the integral is absolutely convergent.
\end{proposition}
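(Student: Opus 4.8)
The plan is to follow the proof of Proposition~\ref{prop: whitdesc} (the analogue in the case $\une$), which is itself a translation of \cite[Theorem~9.7 (1)]{MR2848523}; here one translates \cite[Theorem~9.5, part (1)]{MR2848523} instead. Write $[X]$ for $X(F)\bs X(\A)$ when $X$ is a unipotent $F$-group. First, unfolding the two definitions gives
\[
\whit^{\psi_{N'}}(\GG(\reseisen\varphi),g)=\int_{[N']}\int_{[V]}\reseisen\varphi(vug)\,\psi_V(v)^{-1}\psi_{N'}(u)^{-1}\,dv\,du.
\]
Since $N'\subset G'$ normalizes $V$, the product $R:=V\rtimes N'$ is a unipotent subgroup of $G$ and $\psi_R(vu):=\psi_V(v)\psi_{N'}(u)$ is a character of $R$ trivial on $R(F)$; as $\reseisen\varphi$ is of moderate growth this iterated integral equals $\int_{[R]}\reseisen\varphi(rg)\psi_R(r)^{-1}\,dr$.

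The key step is to transport this integral to the target shape by conjugating the integration variable by $\gamma$ and performing a sequence of \emph{root exchanges}. The element $\gamma$ is chosen precisely so that the $\gamma$-conjugate of $\psi_R$ is compatible with the degenerate character $\psi_N$ on $N$ --- this is what the relation $\psi_{N'_{\Levi'}}(u)=\psi_{N_\Levi}(\gamma u\gamma^{-1})$, its companions on $U'$, and the definition of $\psi_V$ via $w_U$-conjugation are for --- and it is also the reason the group $V_\gamma=V\cap\gamma^{-1}N\gamma$ enters. After conjugation $[R]$ and the target $[N]$ differ only by certain root subgroups, and I would pass between them by repeatedly invoking the standard root-exchange lemma: for a compatible pair of abelian unipotent subgroups $X,Y$ (normalized by the relevant torus, with the bilinear form on $X\times Y$ induced by the ambient character nondegenerate and the character trivial on one of them) one has $\int_{[X]}f(xh)\,dx=\int_{[Y]}f(yh)\,dy$ for any automorphic $f$. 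Carrying out these exchanges, and inserting or deleting the root subgroups on which the relevant character is trivial, should turn $\int_{[R]}\reseisen\varphi(rg)\psi_R(r)^{-1}\,dr$ into
\[
\vol(V_\gamma(F)\bs V_\gamma(\A))\int_{V_\gamma(\A)\bs V(\A)}\whit^{\psi_N}(\reseisen\varphi,\gamma vg)\,\psi_V(v)^{-1}\,dv.
\]
The volume factor is the contribution of the subgroup that, along the way, gets integrated out against a trivial character; it reflects the difference between the Gelfand--Graev construction here and the Fourier--Jacobi construction of Proposition~\ref{prop: whitdesc}, where no such factor appears. (Throughout, $\alpha=-2$ as fixed in \S\ref{unosec: notation}.)

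Finally, for the absolute convergence of the displayed iterated integral --- which also underwrites the formal manipulations above --- I would use that $\whit^{\psi_N}(\reseisen\varphi,\cdot)$ is a degenerate Fourier coefficient of a residual Eisenstein series in $\AF(\pi)$ and hence decays rapidly in the relevant cone; this decay is exactly the content of \cite[Theorem~9.5, part (1)]{MR2848523}, and together with the compact support of the integrand in the non-archimedean directions and standard gauge estimates on $V_\gamma(\A)\bs V(\A)$ it gives the claim. Once the dictionary between the unipotent subgroups and characters of [loc.\ cit.] and the objects $V$, $V_\gamma$, $N$, $\psi_V$, $\psi_N$ is pinned down, the proposition is a direct reformulation. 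I expect the only genuinely non-routine point to be the combinatorial bookkeeping of the middle step: identifying the precise chain of root exchanges carrying $R$ to the $\gamma$-conjugate of $N$ plus the residual $V_\gamma(\A)\bs V(\A)$-integration, checking the character compatibilities at each stage, and isolating the volume factor in the correct Tamagawa normalization.
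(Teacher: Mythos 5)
Your overall framing --- that the proposition is a reformulation of \cite[Theorem~9.5, part (1)]{MR2848523} and that the work consists in translating the unipotent data of [loc.\ cit.] into $V$, $V_\gamma$, $\gamma$, $\psi_V$, $\psi_N$ --- is the right idea, but the proposal defers precisely the step that constitutes the proof. The paper does not redo the unfolding at all: it takes as its starting point the \emph{explicit} formula of GRS Theorem~9.5(1) (with $\alpha=-2$, $\lambda=1$), namely an iterated integral of $\whit^{\psi_N}(\reseisen\varphi,\cdot)$ over concrete coordinate subgroups $X(\A)$, $Y(\A)$ against specific elements $\delta$, $\epsilon$, $\kappa$, and then performs a short but nontrivial matrix computation: factor $\epsilon=\epsilon_U\epsilon_{\bar U}$, conjugate $\epsilon_U$ past $\delta$, absorb it into the Siegel unipotent by the change of variable $x\mapsto x'$ (which is what produces the character $\psi^{-1}(x_{n,n+1})$), note $\kappa^{-1}\epsilon_{\bar U}\kappa\in Y$ and $\delta\kappa=\gamma$, and finally verify the group-theoretic identity $(\gamma^{-1}\toUbar(X)\gamma\, Y)\rtimes V_\gamma=V$ together with the matching of characters with $\psi_V^{-1}$. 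Your plan instead proposes to start from the definition of $\whit^{\psi_{N'}}(\GG(\reseisen\varphi),g)$ and reach the target by ``a sequence of root exchanges'' whose identification you explicitly leave open; but that is tantamount to re-proving GRS Theorem~9.5 itself, which rests not on a generic exchange lemma but on the full descent machinery (Fourier expansions, vanishing of coefficients attached to larger orbits, etc.). Moreover the exchange identity you quote, $\int_{[X]}f(xh)\,dx=\int_{[Y]}f(yh)\,dy$, is not the root-exchange lemma (the correct statement trades a compact integration $[X]$ with character against a full adelic integration $Y(\A)$ under precise nondegeneracy hypotheses), so as written this step would not go through even in outline.

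Two further points. The absolute convergence is not ``the content of \cite[Theorem~9.5, part (1)]{MR2848523}'' --- that result is the identity you are reformulating, not a decay estimate; in the paper the convergence of the $V_\gamma(\A)\bs V(\A)$-integral is obtained from Lemma~\ref{L: whitformGG} (local absolute convergence, with compactly supported integrand at the non-archimedean places), applied to the factorized degenerate Whittaker coefficient. And the factor $\vol(V_\gamma(F)\bs V_\gamma(\A))$ is not produced by some subgroup ``integrated out against a trivial character'' along a chain of exchanges; it is merely a normalization making the statement measure-independent, and in the paper's proof (Tamagawa measures) it equals $1$ and never appears.
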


\begin{proof}
It is enough to prove the required identity for $g=e$. We use Tamagawa measures in the proof.
The expression for $\whit^{\psi_{N'}}(\GG(\reseisen\varphi),e)$
in \cite[Theorem 9.5, part (1)]{MR2848523} is (with $\alpha=-2$ and $\lambda=1$ in their notation):
\begin{equation}\label{eq: GRSwhitGG}
\int_{Y(\A)}\big(\int_{X(\A)}\whit^{\psi_N}(\reseisen\varphi,\toUbar(x)\delta\epsilon\kappa y)\ dx\big)\ dy
\end{equation}
where we use the following notation
\begin{itemize}
\item $\delta$ is the Weyl element such that
$ \delta_{i,2i-1}=1$, $i=1,\dots,2n+1$, $\delta_{2n+1+i,2i}=1$, $i=1,\dots,2n+1$,
\item $\kappa=\levi(\kappa')$ where $\kappa'$ is the Weyl element of $\GLnn$ such that
$\kappa'_{2i,i}=1$, $i=1,\dots,n$; $\kappa'_{2i-1,n+i}=1$, $i=1,\dots,n+1$,
\item $\epsilon=\diag(A,\dots,A,I_2,A^*,\dots,A^*)$ where $A=\sm {\frac12}1{-\frac12}1$,\footnote{The definition of $\epsilon$ in
\cite[(9.58)]{MR2848523} is incorrect.}
\item $X$ is the subspace of $\antisym_{2n+1}$ consisting of the strictly upper triangular matrices. $\toUbar(x)=\sm{I_{2n+1}}{}{x}{I_{2n+1}}$
for $x\in \antisym_{2n+1}$.
\item $Y$ is the subgroup of $N_\Levi$ consisting of the matrices of the form
$\levi(\sm{I_n}{y}{}{I_{n+1}})$ where $y$ is lower triangular, (namely $y_{i,j}=0$ if $j>i$).
\end{itemize}
(We note that our choice of the character $\psi_{N_\GLnn}$ is consistent with the choice of \cite{MR2848523} in the case $G=\biguno$.)

Writing
\[
A=\sm 1101\sm 10{-\frac12}1
\]
we get
$\epsilon=\epsilon_U\epsilon_{\bar U}$
where
\[
\epsilon_U=\diag(\sm 1101,\dots,\sm 1101,I_2,\sm 1{-1}01,\dots,\sm 1{-1}01),
\]
\[
\epsilon_{\bar U}=\diag(\sm 10{-\frac12}1,\dots,\sm 10{-\frac12}1,I_2,\sm 10{\frac12}1,\dots,\sm 10{\frac12}1).
\]

Note that
$\delta\epsilon_U\delta^{-1}=\sm{I_{2n+1}}{-\epsilon_\delta}0{I_{2n+1}}$ where $\epsilon_\delta=\diag(-I_n,0,I_n)$.
For any $x\in X(\A)$ we can write
\[
\sm{I_{2n+1}}0x{I_{2n+1}}\sm{I_{2n+1}}{-\epsilon_\delta}0{I_{2n+1}}=\sm{(I_{2n+1}-x\epsilon_\delta)^*}{-\epsilon_\delta}{}{I_{2n+1}-x\epsilon_\delta}
\sm{I_{2n+1}}0{x'}{I_{2n+1}}
\]
where $x'=(I_{2n+1}-x\epsilon_\delta)^{-1}x=x+x\epsilon_\delta x+\dots+(x\epsilon_\delta)^{2n}x$.
After a change of variable $x'\mapsto x$ we get
\[
\int_{Y(\A)}\big(\int_{X(\A)}\whit^{\psi_N}(\reseisen\varphi,
\toUbar(x)\delta\epsilon_{\bar U}\kappa y)\psi^{-1}(x_{n,n+1})\ dx\big)\ dy
\]
since $\psi_{N}(\levi((I_{2n+1}-x\epsilon_\delta)^*))=\psi^{-1}(x_{n,n+1})$.

Note that $\kappa^{-1}\epsilon_{\bar U}\kappa\in Y$ and $\delta\kappa=\gamma$.
Changing variable  $y\mapsto (\kappa^{-1}\epsilon_{\bar U}\kappa)^{-1}y$, we get that the above equals
\[
\int_{Y(\A)}\big(\int_{X(\A)}\whit^{\psi_N}(\reseisen\varphi,
\toUbar(x)\gamma y)\psi^{-1}(x_{n,n+1})\ dx\big)\ dy
\]
Note that
\[
\gamma^{-1}\toUbar(X)\gamma=\{\left(\begin{smallmatrix}I_n&(x_1,x_1')&&x_2\\&I_{n+1}&&\\&&I_{n+1}&*\\&&&I_n\end{smallmatrix}\right):
x_1\in\Mat_{n,n}\text{ strictly upper triangular}, x_2\in\antisym_n, x_1'\in E^n\},
\]
and thus $(\gamma^{-1}\toUbar(X)\gamma Y)\rtimes V_\gamma=V$.
In conclusion we obtain
\[
\int_{V_\gamma(\A)\bs V(\A)}\whit^{\psi_{N}}(\reseisen\varphi,\gamma x)\psi_V^{-1}(x)\ dx
\]
provided it converges, since $\psi_V(\gamma^{-1}\toUbar(x)\gamma)=\psi(x_{n,n+1})$ for $x\in X(\A)$.

Finally, the absolute convergence follows from Lemma \ref{L: whitformGG} applied to $\whitformo(W,g,s)$ in our setting.
\end{proof}

\subsection{Local Shimura integrals}

Now let $\pi\in\Irr_{\gen}\Levi$ and $\sigma\in\Irr_{\gen}G'$ with Whittaker model $\WhitGd(\sigma)$.
As in the $\une$ case,  for any $W'\in \WhitGd(\sigma)$, $W\in\Ind(\WhitML(\pi))$ define
the local Shimura type integral
\begin{equation}\label{eq: localinneruno}
{J}( W',W,s):=\int_{N'\bs G'}W'(g')\whitformo(W, g,s)\ dg.
\end{equation}
Once again we postulate the following working assumptions on the expected analytic properties of the Shimura integral.

\begin{assumption}\label{A: analyticuno}
Suppose that $\pi\in \Irr_{\gen}\GLnn$.
\begin{itemize}
\item ${J}$ converges in some right-half plane (depending only on
$\pi$ and $ \sigma$), admits a meromorphic continuation in $s$.
\item For any $s\in\C$ we can choose $W'$ and $W$  such that $J(W',W,s)\ne0$.
\item If $E/F$ is $p$-adic and unramified,  $\pi$, $\sigma$ and $\psi$ are unramified, $W^\circ$  is as in
Lemma \ref{L: whitform} part \ref{part: unramwhitformGG} and $W^{'\circ}$ is $K'$-invariant with $ W^{'\circ}(e)=1$ then (see \cite[(10.61)]{MR2848523})
\begin{equation} \label{eq: Junramuno}
{J}( W^{'\circ},W^\circ,s)=
\vol(K')\frac{L(s+\frac12,\sigma\times\pi)}{L(2s+1,\pi,\As^+)},
\end{equation}
(assuming $\vol(N'\cap K')=\vol(V\cap K)=\vol(V_\gamma\cap K)=1$).
\end{itemize}
\end{assumption}

One also expects a local functional equation for $J$ similar to \cite{MR1671452} and \cite{MR3267119}.

Similarly to Proposition~\ref{prop: innerune}, from \cite[Theorem 10.3, (10.4), (10.61)]{MR2848523} we get:
\begin{proposition}\label{prop: inneruno}
If $\varphi\in \AF(\pi)$ is such that $\whit^{\psi_{N_\Levi}}(\varphi,\cdot)=\prod_v W_v$,  and $\varphi'\in \sigma'\in \Cusp_{\gen}G'$ is such that
$ \whit^{\psi_{N'}^{-1}}(\varphi')=\prod_v W_v'$, then
for any sufficiently large finite set of places $S$ we have
\begin{equation}\label{unoeq: globalinner}
(\varphi',\GG(\eisen(\varphi,s)))_{G'}=\frac12\big(\prod_{j=1}^{2n+1}L(j,\chiq^j)\big)^{-1}
\frac{L^S(s+\frac12,\sigma'\otimes\pi)}{L^S(2s+1,\pi,\As^+)}
\prod_{v\in S}J_v( W_v',W_v,s).
\end{equation}
Here on the right hand side we take the unnormalized Tamagawa measures on $G'(F_S)$, $N'(F_S)$, $V(F_S)$ and $V_\gamma(F_S)$
(which are independent of the choices of gauge forms when $S$ is sufficiently large).
\end{proposition}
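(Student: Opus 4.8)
The plan is to prove Proposition~\ref{prop: inneruno} by the unfolding argument that underlies Proposition~\ref{prop: innerune} in the even case, now specializing the global computation of Ginzburg--Rallis--Soudry in \cite[\S10]{MR2848523} --- namely Theorem~10.3 together with (10.4) and (10.61) --- to the setting of \S\ref{unosec: notation}. First I would substitute the definition \eqref{def: GG} of the Gelfand--Graev coefficient into $(\varphi',\GG(\eisen(\varphi,s)))_{G'}$ and unfold against the $\psi_{N'}^{-1}$-Whittaker expansion of $\varphi'$, which is legitimate because $\sigma'\in\Cusp_{\gen}G'$. This leads to the global Shimura integral
\[
\vol(V_\gamma(F)\bs V_\gamma(\A))\int_{N'(\A)\bs G'(\A)}\whit^{\psi_{N'}^{-1}}(\varphi')(g)\left(\int_{V_\gamma(\A)\bs V(\A)}\whit^{\psi_N}(\eisen(\varphi,s),\gamma v g)\psi_V^{-1}(v)\,dv\right)dg,
\]
whose inner $V$-integral is precisely the one already computed in Proposition~\ref{thm: FCdescentGG}, now with the Eisenstein series $\eisen(\varphi,s)$ in place of its residue $\reseisen\varphi$; this is \cite[Theorem~10.3]{MR2848523}. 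The leading constant $\frac12$ comes from $\vol(G'(F)\bs G'(\A))=2$ for the Tamagawa measure that normalizes the pairing \eqref{eq: innerdef}.

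Second I would factorize. Using the Euler products $\whit^{\psi_{N_\Levi}}(\varphi,\cdot)=\prod_v W_v$ and $\whit^{\psi_{N'}^{-1}}(\varphi')=\prod_v W_v'$, the inner $V$-integral above decomposes place by place into the local integrals defining $\whitformo_v(W_v,\cdot,s)$ in \eqref{unoeq: defwhitform} --- this is the global analogue of that definition, and Lemma~\ref{L: whitformGG} part~\ref{part: unramwhitformGG} guarantees that almost all of the resulting local factors equal $1$ on the unramified data. Thus, in the region of absolute convergence, the global Shimura integral becomes the Euler product $\prod_v J_v(W_v',W_v,s)$ of the local Shimura integrals \eqref{eq: localinneruno}, as in \cite[(10.4)]{MR2848523}.

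Third I would separate the places in $S$. For $v\notin S$ every datum is unramified, and the third bullet of Assumption~\ref{A: analyticuno} --- that is, \cite[(10.61)]{MR2848523} --- evaluates the local integral on the unramified vectors as $J_v(W_v^{'\circ},W_v^\circ,s)=\vol(K'_v)\,L(s+\frac12,\sigma'_v\times\pi_v)/L(2s+1,\pi_v,\As^+)$. Hence $\prod_{v\notin S}J_v$ supplies $\big(\prod_{v\notin S}\vol(K'_v)\big)\,L^S(s+\frac12,\sigma'\otimes\pi)/L^S(2s+1,\pi,\As^+)$, and collecting the product of local volumes against the Tamagawa normalizations of the measures on $G'$, $N'$, $V$ and $V_\gamma$ produces the factor $\big(\prod_{j=1}^{2n+1}L(j,\chiq^j)\big)^{-1}$ --- the volume formula for $G'\simeq\uno$, the odd analogue of \eqref{eq: volform}. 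Combining the three steps yields \eqref{unoeq: globalinner}, and the convergence in a right half-plane follows from that of $\eisen(\varphi,s)$ together with the first bullet of Assumption~\ref{A: analyticuno}.

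The step I expect to be the main obstacle is not this bookkeeping but reconciling the conventions of \cite{MR2848523} with ours, exactly as in the proof of Proposition~\ref{thm: FCdescentGG}: one must track the choice $\alpha=-2$ fixed in \S\ref{unosec: notation}, the precise normalization of $\psi_V$ with its extra factor $\psiweil^{-1}(\c{u_{n,2n+1}+u_{n,2n+2}})$, and the identity $\gamma=\delta\kappa$ together with the change of variables that straightens $\toUbar(X)\,\epsilon_{\bar U}\,\kappa Y$ into $V_\gamma\bs V$. Once this identification is in place and the unramified evaluation \cite[(10.61)]{MR2848523} is matched to our $L$-factor conventions, the remaining manipulations are the evident variants of those behind Proposition~\ref{prop: innerune}.
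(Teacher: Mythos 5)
Your proposal follows essentially the same route as the paper: the paper gives no independent argument for Proposition \ref{prop: inneruno} beyond citing \cite[Theorem 10.3, (10.4), (10.61)]{MR2848523} ``similarly to Proposition \ref{prop: innerune}'' and reconciling conventions (the remark after the proposition addresses exactly the $\psi_V$ versus $\psi_{\ell,\alpha}$ discrepancy and the choice $\alpha=-2$ that you single out), and your three steps --- unfolding, Euler factorization via (10.4), unramified evaluation (10.61) together with the Tamagawa bookkeeping producing the $\frac12$ from $\vol(G'(F)\bs G'(\A))=2$ and the volume of $K'$ --- are precisely that derivation.

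One caution about your intermediate display: as written it is not literally correct. The unfolding of $(\varphi',\GG(\eisen(\varphi,s)))_{G'}$ produces the $\psi_N$-Whittaker coefficient of the \emph{section} $\varphi_s$ (equivalently $\prod_v (W_v)_s$, which is why the local integrals $J_v(W_v',W_v,s)$ involve $W_v$ and not $M_v(s)W_v$), not the $\psi_N$-Whittaker coefficient of the full Eisenstein series $\eisen(\varphi,s)$. At generic $s$ the latter is a sum of two terms, coming from the two terms $\varphi_s$ and $M(s)\varphi$ of the constant term along $U$, and the intertwining term would contribute a second, unwanted $L$-ratio not present in \eqref{unoeq: globalinner}. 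The identification with Proposition \ref{thm: FCdescentGG} is only valid for the residue $\reseisen\varphi$, where it is exactly the $M(s)$-term that survives (this is how $M(\frac12)$ enters the factorization of the Whittaker coefficient of the descent); for the Eisenstein series itself the correct unfolded expression is the one in \cite[Theorem 10.3]{MR2848523}, which you in any case invoke, so the plan goes through once this step is stated correctly.
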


\begin{remark}
On p.287 of \cite{MR2848523} the formula for $\psi^{-1}_{\ell,\alpha}(\varepsilon_0^{-1}\hat z\varepsilon_0)$  (the second
display after (10.14)) is only correct in the orthogonal group case.
In the unitary group case the expression is for $\psi_{\ell,\alpha}(\varepsilon_0^{-1}\hat z\varepsilon_0)$.
This accounts for the difference between our definition of $\psi_V$ and the definition of $\psi_{\ell,\alpha}$ in \cite{MR2848523}.
\end{remark}

\subsection{Main reduction theorem}

Let $F$ be a local field.
Let $\pi\in\Irr_{\gen,\ut}\GLnn$. We say that $\pi$ is \emph{good}
if the following conditions are satisfied for all $\psi$:
\begin{enumerate}
\item $\des(\pi)$ and $\des(\c{\pi})$ are irreducible.
\item ${J}(W',W,s)$ is holomorphic at $s=\frac12$ for any $W'\in \desinv(\c{\pi})$ and $W\in\Ind(\WhitML(\pi))$.

\item For any $W'\in\desinv(\c{\pi})$,
\begin{equation} \label{eq: factorsthruGG}
J(W',W,\frac12)\text{ factors through the map }
W\mapsto(\whitformo(M(\frac12)W,\cdot,-\frac12)).
\end{equation}
\end{enumerate}
As in the case of $\une$, we can conclude that if $\pi$ is good, there exists a non-zero constant $c_\pi$ such that
\begin{equation} \label{eq: unomain}
\stint_{N'}J(\sigma'(u)W',W,\frac12)\psi_{N'}(u)\,du
=c_\pi W'(e)\whitformo(M(\frac12)W,e,-\frac12).
\end{equation}
Here $\sigma'=\desinv(\c{\pi})$.
Once again, $c_\pi$ is independent of choices of Haar measures as long as we take compatible measures on $G'$ and $U$ as in Remark~\ref{rem: indmeas}.
We note also that when $\vol(U\cap K)=1$, $\vol(K')=\big(\prod_{j=1}^{2n+1}L(j,\chiq^j)\big)^{-1}$.

As in \S\ref{sec: redeven} we can conclude:

\begin{theorem} \label{unothm: local to global}
Let $\pi\in\Ucusp\GLnn$ and $k$ as above.
Assume our Working Assumptions \ref{unoassume} and \ref{A: analyticuno}.
Then for all $v$ $\pi_v$ is good.
Moreover, let $S$ be a finite set of places including all the archimedean  places
such that $E/F$, $\pi$ and $\psi$   are unramified outside $S$. Let $\sigma=\des(\pi)$.
Then for any $ \varphi\in \sigma$ and $ \d\varphi\in\d\sigma$ which are fixed under $K'_v$   for all $v\notin S$ we have
\begin{multline}
 \whit^{\psi_{N'}}( \varphi) \whit^{\psi_{ N'}^{-1}}(\d\varphi )=
2^{1-k}(\prod_{v\in S}c_{\pi_v}^{-1})\frac{\prod_{j=1}^{2n+1}L^S(j,\chiq^j)}{L^S(1,\pi\otimes\chiqe,\As^+)}\\
(\vol(N'(\OO_S)\bs N'(F_S)))^{-1}\stint_{N'(F_S)}(\sigma(u) \varphi,\d\varphi)_{G'}\psi_{ N'}(u)^{-1}\ du.
\end{multline}
\end{theorem}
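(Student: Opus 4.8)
The plan is to run the argument of the proof of Theorem~\ref{unethm: local to global} almost verbatim, feeding in the $\uno$-analogue of each ingredient: the Gelfand--Graev coefficient $\GG$ in place of the Fourier--Jacobi coefficient $\FJ$ (so that no Schwartz function $\Phi$ intervenes), $\whitformo$ in place of $\whitform$, Proposition~\ref{thm: FCdescentGG} in place of Proposition~\ref{prop: whitdesc}, Proposition~\ref{prop: inneruno} in place of Proposition~\ref{prop: innerune}, and Working Assumptions~\ref{unoassume} and~\ref{A: analyticuno} in place of~\ref{uneassume} and~\ref{A: analytic}. All global measures are taken to be Tamagawa measures, and the measures on $G'$ and $U$ are compatible in the sense of Remark~\ref{rem: indmeas}.

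First I would establish the unramified factorization of the Whittaker coefficient of the descent. Combining Proposition~\ref{thm: FCdescentGG} with~\eqref{eq: unramwhit} and Lemma~\ref{L: whitformGG} part~\ref{part: unramwhitformGG}, one obtains, for every factorizable $\varphi\in\AF(\pi)$ with $\whit^{\psi_{N_\Levi}}(\varphi,\cdot)=\prod_v W_v$ and every $S$ sufficiently large,
\[
\whit^{\psi_{N'}}(\GG(\reseisen\varphi),g)=\resm^S(\pi)\prod_{v\in S}\whitformo_v(M_v(\frac12)W_v,g_v,-\frac12),
\]
where $\resm^S(\pi)=\lim_{s\to\frac12}(s-\frac12)^k L^S(2s,\pi,\As^+)/L^S(2,\pi,\As^+)$; together with Assumption~\ref{unoassume} this already forces the local descent $\des(\pi_v)$ to be irreducible for every $v$. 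Next, set $\sigma'=\desinv(\c{\pi})$; by Proposition~\ref{prop: uodd} applied to $\c{\pi}$, $\sigma'$ is an irreducible generic cuspidal representation of $G'$ whose contragredient weakly lifts to $\pi$, so that $L^S(s,\sigma'\otimes\pi)=L^S(s,\pi\otimes\c{\pi})=L^S(s,\pi,\As^+)L^S(s,\pi,\As^-)$ has a pole of order exactly $k$ at $s=1$. In Proposition~\ref{prop: inneruno} the left-hand side has a pole of order at most $k$ at $s=\frac12$ (since $\eisen(\varphi,s)$ does and $\varphi'$ is rapidly decreasing), while by the non-vanishing clause of Assumption~\ref{A: analyticuno} one may choose data with $\prod_{v\in S}J_v(W_v',W_v,s)\ne0$ at $s=\frac12$; comparing orders of poles forces each $J_v(W_v',W_v,s)$ to be holomorphic at $s=\frac12$.

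Multiplying~\eqref{unoeq: globalinner} by $(s-\frac12)^k$ and letting $s\to\frac12$ then gives
\[
(\varphi',\GG(\reseisen\varphi))_{G'}=\frac12\big(\prod_{j=1}^{2n+1}L^S(j,\chiq^j)\big)^{-1}\frac{\lim_{s\to1}(s-1)^k L^S(s,\pi\otimes\c{\pi})}{L^S(2,\pi,\As^+)}\prod_{v\in S}J_v(W_v',W_v,\frac12).
\]
This yields the factorization property: if at a place $v_0$ the vector $W_{v_0}$ satisfies $\whitformo(M_{v_0}(\frac12)W_{v_0},\cdot,-\frac12)\equiv0$, then by the first step $\whit^{\psi_{N'}}(\GG(\reseisen\varphi),\cdot)\equiv0$, so $\GG(\reseisen\varphi)\equiv0$ by the irreducibility and genericity of the global descent, and then the displayed identity forces $J_{v_0}(W_{v_0}',W_{v_0},\frac12)=0$. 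With the previous step this shows every $\pi_v$ is good, so~\eqref{eq: unomain} is available; combining the first-step factorization, the last display and~\eqref{eq: unomain} --- and using the $\uno$-analogue of Lemma~\ref{L: unramified} (established by the same unramified computation from~\eqref{eq: unramwhit},~\eqref{eq: Junramuno}, $\vol(K')=\big(\prod_{j=1}^{2n+1}L(j,\chiq^j)\big)^{-1}$ and the identity $L(1,\des(\pi),\Ad)=L(1,\pi,\As^-)=L(1,\pi\otimes\chiqe,\As^+)$) to handle the places outside $S$ --- produces the asserted identity, the numerical factor $2^{1-k}$ arising exactly as in the even case from $\resm^S(\pi)$, the Tamagawa volume $\vol(G'(F)\bs G'(\A))=2$, and the $\frac12$ in Proposition~\ref{prop: inneruno}.

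The hard part will not be conceptual, since there is no genuinely new obstacle beyond the case $\une$: it is the $L$-factor bookkeeping specific to the odd unitary group --- correctly matching the unramified Gelfand--Graev integral~\eqref{eq: Junramuno} with the computations of~\cite{MR2848523}, and identifying the adjoint $L$-factor of $\des(\pi)$ with $L(s,\pi\otimes\chiqe,\As^+)$. This is what pins down the precise Euler factor $\prod_{j=1}^{2n+1}L^S(j,\chiq^j)/L^S(1,\pi\otimes\chiqe,\As^+)$ in front of the regularized integral, as well as the normalization $c_{\pi_v}=1$ at the unramified places in the $\uno$-analogue of Lemma~\ref{L: unramified}.
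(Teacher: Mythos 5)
Your proposal is correct and follows essentially the same route as the paper, which itself proves this theorem simply by transposing the argument of Theorem~\ref{unethm: local to global} to the odd case with the substitutions you list ($\GG$ for $\FJ$, Proposition~\ref{thm: FCdescentGG}, Proposition~\ref{prop: inneruno}, Assumptions~\ref{unoassume} and~\ref{A: analyticuno}), including the identification $L(1,\des(\pi),\Ad)=L(1,\pi,\As^-)=L(1,\pi\otimes\chiqe,\As^+)$ and the unramified normalization $c_{\pi_v}=1$.
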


\subsection{Local conjecture}

It follows from Theorem~\ref{unothm: local to global} that Conjecture \ref{conj: global} for $\uno$  is a consequence of the following local conjecture:
\begin{conjecture} \label{unoconj: main}
Let $\pi\in \Irr_{\gen,\ut}\GLnn$ be unitarizable and good.
Then (for compatible Haar measures on $G'$ and $U$) we have $c_{\pi}=\omega'_\pi(-1)$.
\end{conjecture}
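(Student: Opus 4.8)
The plan is to deduce Conjecture~\ref{unoconj: main} (together with Conjecture~\ref{uneconj: main}, since the two are intertwined by what follows) from a multiplicativity property of the constant $c_\pi$ that cuts the general case down to a short list of building blocks, and, for the one genuinely hard block, an identification of $c_\pi$ with a formal degree so that the conjecture becomes a case of Hiraga--Ichino--Ikeda \cite{MR2350057}. First, suppose $\pi\cong\pi_1\times\pi_2$ with $\pi_i\in\Irr_{\gen,\ut}\GL_{m_i}(E)$, the induction irreducible, and $\pi,\pi_1,\pi_2$ all good (each in its own descent setup --- the parity of $m_i$ decides whether the ambient group is of type $\une$ or $\uno$). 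I would prove that then $c_\pi=c_{\pi_1}c_{\pi_2}$, and that this equals $\omega'_{\pi_1}(-1)\omega'_{\pi_2}(-1)=\omega'_\pi(-1)$ as soon as the conjecture holds for $\pi_1$ and $\pi_2$. The mechanism is to write the explicit descent $\des(\pi)$ and the intertwining operator $M(\frac12)$ ``in stages'' along the flag refining the Siegel parabolic $P\subset G$, tracking how the Gelfand--Graev integral \eqref{unoeq: defwhitform}, the character $\psi_V$ and the Weyl element $\gamma$ decompose; this is the unitary analogue of the key computation of \cite{1404.2905} in the metaplectic case, and it uses the postulated analytic properties of the local Shimura integrals to propagate goodness. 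Combined with Matringe's classification \cite[Theorem~5.2]{MR2755483} (and its split and archimedean counterparts), this leaves exactly two kinds of blocks: (a) $\pi=\c\sigma\times\d\sigma$ with $\sigma$ essentially square-integrable, and (b) $\pi$ square-integrable of unitary type.

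For blocks of type (a) --- which already appeared, in the split incarnation, at the end of \S\ref{sec: n=1} with outcome $c_\pi=\omega_{\pi_1}(-1)$ --- the $\GL_m(F)$-period $\ell$ factors through an explicit induced period and the Shimura integral unfolds against the Rankin--Selberg structure of $\sigma\times\d\sigma$, the sign $\omega'_\pi(-1)$ being produced by the element ${\bf r}=\diag(\rr I_m,I_m)$ with $\c\rr=-\rr$ from \S\ref{sec: inflate} (in the inert $p$-adic case, through a quadratic Gauss sum absorbed into $\omega'_\pi(-1)$). For blocks of type (b), $\sigma=\des(\pi)$ is a generic square-integrable representation of $G'=\uno$ (Proposition~\ref{prop: uodd}); I would show that $J(W',W,\frac12)$ computes a matrix coefficient of $\sigma$ paired with $\whitformo(M(\frac12)W,\cdot,-\frac12)$, and that after dividing out the factors $\prod_j L(j,\chiq^j)$ and $L(1,\sigma,\Ad)$ that appear in the unramified computation (Lemma~\ref{L: unramified}) the constant $c_\pi$ becomes, up to the sign, the formal degree of $\sigma$. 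Thus block (b) of Conjecture~\ref{unoconj: main} is equivalent to the Hiraga--Ichino--Ikeda formula for $\deg\sigma$ \cite{MR2350057}, exactly as in \cite{1404.2909} for odd orthogonal and metaplectic groups; one then quotes the known cases of the formal degree conjecture for quasi-split $p$-adic unitary groups, leaving the remaining cases (in particular the archimedean ones) conditional on it.

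The heart of the matter is the first step: a clean ``descent in stages'' for the Gelfand--Graev model of unitary groups, together with the propagation of Working Assumptions~\ref{unoassume} and~\ref{A: analyticuno} through parabolic induction, has not been written down and is the most technical ingredient --- its metaplectic counterpart in \cite{1404.2905} is already substantial, and the bookkeeping of $\gamma$, $\psi_V$ and the extra twist $\psiweil^{-1}(\c{u_{n,2n+1}+u_{n,2n+2}})$ is where errors are likely. Beyond that the remaining gap is external: the formal degree conjecture for unitary groups is itself only partially known. As a consistency check I would first redo, in the style of \S\ref{sec: n=1}, the heuristic computation for $n=1$ (the group $\U_3$): it should already exhibit the sign $\omega'_\pi(-1)$ and fix the normalizations used throughout.
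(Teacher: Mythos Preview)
The statement you are trying to prove is a \emph{conjecture} in the paper, not a theorem: the paper does not prove Conjecture~\ref{unoconj: main} at all. What the paper actually provides (in \S9) is only a \emph{heuristic argument} for the single case $n=1$ (the group $\U_3$), explicitly manipulating the integrals ``formally as if they were absolutely convergent'' to check that the expected sign $\omega'_\pi(-1)$ appears. There is no rigorous proof in the paper against which to compare your proposal.

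Your proposal is therefore not an alternative proof but a research program that goes well beyond what the paper does. The strategy you outline --- multiplicativity of $c_\pi$ under parabolic induction, reduction via Matringe's classification to blocks of type $\c\sigma\times\d\sigma$ and square-integrable blocks, and identification of the square-integrable case with the Hiraga--Ichino--Ikeda formal degree conjecture --- is indeed the natural one, and the paper's own introduction points in exactly this direction (``the putative local identity is expected to be equivalent to the formal degree conjecture\dots See \cite{1404.2909}\dots''). But you correctly flag the real obstacles: the ``descent in stages'' multiplicativity for the unitary Gelfand--Graev/Fourier--Jacobi models has not been written down (its metaplectic analogue in \cite{1404.2905} is already delicate, and here the two parities interact), the Working Assumptions must be propagated through induction, and the formal degree conjecture for quasi-split unitary groups is only partially known. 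None of these are addressed in the paper, and none of them are minor --- so your proposal, as it stands, is a plausible outline rather than a proof. Your suggested consistency check (redo the $\U_3$ heuristic) is precisely what \S9 of the paper already carries out.
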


\section{A heuristic argument: case of $\U_3$}
We give a purely formal computation to support Conjecture~\ref{unoconj: main} in the case of $\U_3$.

We assume $E/F$ is inert and $n=1$. We will use the notation $\toU(x):=\sm{I_3}{x}{}{I_3}$ and $\toUbar(x):=\sm{I_3}{}{x}{I_3}$.
(Recall $G=\U_6^+$.)
For simplicity we assume that $E/F$ is unramified and fix $\rr\in E$ such that $\rr=-\c{\rr}$ and $\abs{\rr}=1$.
(In the ramified case, one has to add appropriate powers of $\abs{\rr}$ in the computations below.
The split case can be treated similarly by taking $\rr=(1,-1)$.)
We take the self-dual measures on $E$ (and $F$) with respect to $\psi$ (and $\psi(\tau\cdot)\rest_F$).

\subsection{Invariant linear form}
For any $W\in\Ind(\WhitML(\pi))$ define (see \S\ref{sec: inflate})
\begin{multline*}
L_W(g)=
\int_{F^\times}\int_{F^\times}\int_F\iint_{F^6}W_\frac12 (\levi(
\left(\begin{smallmatrix}t_1&&\\t_2 x&t_2&\\&&1\end{smallmatrix}\right))\toUbar(
\left(\begin{smallmatrix}y_1\rr&y_2\rr&y_3\rr\\y_4\rr&y_5\rr&y_2\rr\\y_6\rr&y_4\rr&y_1\rr\end{smallmatrix}\right))g )\ dy_i\ dx
\\
\abs{t_1}^{-\frac52}\abs{t_2}^{-\frac32}\ dt_2\ dt_1.
\end{multline*}
Then (once again by \cite{MR2787356}) $L_W$ is left $H$-invariant where $H={\bf r}^{-1}\GL_6(F){\bf r}\cap G\simeq\Sp_3(F)$, (${\bf r}=\diag(\rr, \rr,\rr,1,1,1)$).
Note that $L_W$ has the extra equivariance property
\[
L_W(\diag(z,z,z,\c{z}^{-1},\c{z}^{-1},\c{z}^{-1})g)=\omega_\pi(z)L_W(g).
\]
Thus as observed in \S\ref{sec: inflate}, $L_W(h\cdot)=\omega'_\pi(\lambda(h))L_W(\cdot)$ for $h\in H'$, where
\[
H'=\{h\in G:\diag(I_3,-I_3)h=\lambda(h) \c{h}\diag(I_3,-I_3)\}.
\]

Using Fourier inversion, we can express $M^*W:=(M(\frac12)W)_{-\frac12}$ in terms of $L_W$ as follows.

\begin{claim} \label{claim: M12o}
For any $W\in\Ind(\WhitML(\pi))$, 
\begin{equation}\label{eq: M12o}
M^*W(g)=\int_{(N\cap H)\bs N} L_W(\sm{}{I_3}{I_3}{}ug)\psi_N^{-1}(u)\ du.\end{equation}
\end{claim}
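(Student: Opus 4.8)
\emph{Approach.} The plan is to follow the proof of the $\une$-analogue, Claim~\ref{claim: M12} (that is, \cite[``Claim~7.4'']{1401.0198}): to express $M^*W$ through $L_W$ by Fourier inversion. First recall that here $m=2n+1=3$ is odd, so the sign in $w_U=\sm{}{I_m}{\pm I_m}{}$ equals $(-1)^{m+1}=1$ and hence $\sm{}{I_3}{I_3}{}=w_U$; and by \eqref{eq: defM} the function $M^*W$ underlying $M(\tfrac12)W\in\Ind(\WhitMLF(\d\pi),-\tfrac12)$ is (the value at $s=\tfrac12$ of) $g\mapsto\int_U W_s(w_U u g)\,du$. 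We will also use the relations $w_U\toU(x)w_U^{-1}=\toUbar(x)$ and $w_U\levi(a)w_U^{-1}=\levi(a^*)$, which are immediate from $w_U=\sm{}{I_3}{I_3}{}$ and $\levi(a)=\diag(a,a^*)$.

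\emph{Key steps.} By its construction (\S\ref{sec: inflate}, unfolding the Bruhat cell of $(P\cap H')\bs H'$ and then applying the $\GL_3(F)$-invariant functional on $\Whit^{\psi_{N_\Levi}}(\pi)$), $L_W$, viewed as a function on $G$, integrates $W_\frac12$ over the ``real'' subgroup $\bar U\cap H=\{\toUbar(\rr y)\}$ and over the mirabolic-type subgroup $\{\levi(\begin{smallmatrix}t_1&&\\t_2x&t_2&\\&&1\end{smallmatrix})\}$ of $\Levi$. I would substitute this into the right-hand side of \eqref{eq: M12o} and reorganize: writing $N=N_\Levi\ltimes U$, $N\cap H=(N_\Levi\cap H)\ltimes(U\cap H)$ and using that $\psi_N$ is trivial on $U$, the outer integral factors through $(N_\Levi\cap H)\bs N_\Levi$ (against $\psi_{N_\Levi}^{-1}$) and through $(U\cap H)\bs U$. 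Moving $w_U$ to the left past the two building blocks of $L_W$ turns $\toUbar(\rr y)$ into $\toU(\rr y)\in U\cap H$ and turns $\levi(\begin{smallmatrix}t_1&&\\t_2x&t_2&\\&&1\end{smallmatrix})$ into $\levi$ of its image under $a\mapsto a^*$. A change of variables inside the abelian group $U$ (together with a root exchange \`a la Ginzburg--Rallis--Soudry to absorb the $N_\Levi$-conjugation) then fuses the integral over $\{\toU(\rr y)\}$ with the integral over $(U\cap H)\bs U$ into a single integral over $U$, matching $\int_U W_\frac12(w_U u g)\,du$; while the integral over the mirabolic-type subgroup, together with the transversal of $(N_\Levi\cap H)\bs N_\Levi$, the $\abs{t_i}$-weights, and the left $N_\Levi$-equivariance of $W_\frac12$, collapses by Fourier inversion against $\psi$. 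The essential input at this last step is that the functional on $\Whit^{\psi_{N_\Levi}}(\pi)$ used to build $L_W$ is the $\GL_3(F)$-invariant one of \cite{MR2787356}: this is what (formally) guarantees convergence of the inner integral and annihilates the unwanted Fourier modes. Keeping track of the characters $\psi_N,\psi_{N_\Levi},\psi$ and of the self-dual Haar normalizations (with $\abs\rr=1$) through every conjugation and substitution then yields precisely $\int_U W_\frac12(w_U u g)\,du=M^*W(g)$, with no leftover power of $\abs\rr$. As a check, both sides satisfy $f(\diag(z,z,z,\c{z}^{-1},\c{z}^{-1},\c{z}^{-1})g)=\omega_\pi(z)f(g)$ --- for the right-hand side this is the extra equivariance of $L_W$ noted just before the claim --- consistent with $M^*W\in\Ind(\WhitMLF(\d\pi))$ after the $\nu^{-\frac12}$-twist.

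\emph{Main obstacle.} As throughout this section, the integrals in question are only conditionally (or not) convergent, so the manipulations above are formal; turning them into a rigorous proof would require a regularization along the lines of \cite{MR3267119,MR3267120}, which is outside our scope here. The genuine work is the combinatorial bookkeeping of the several unipotent decompositions --- writing $U$ as the product of $\{\toU(\rr y)\}$ and an ``imaginary'' transversal of $(U\cap H)\bs U$, and likewise splitting $N_\Levi$ --- and verifying that all the relevant characters match up after each conjugation by $w_U$ and each change of variables. This is the one place where the $\U_3$ computation differs substantively (in detail, not in spirit) from the $\U_2$ case of \cite[\S7]{1401.0198}: the single torus parameter there is replaced by the pair $(t_1,t_2)$ and the extra unipotent parameter $x$, and the $2\times 2$ symmetric block by the $3\times 3$ block with the symmetry pattern displayed in the definition of $L_W$.
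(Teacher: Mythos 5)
Your proposal is correct at the (purely formal) level on which this section operates and follows essentially the same route as the paper, which states the claim simply as an application of Fourier inversion in the spirit of the $\une$ analogue ``Claim''~\ref{claim: M12} (i.e.\ \cite[``Claim~7.4'']{1401.0198}) and gives no further proof. Your sketch --- substituting the definition of $L_W$, conjugating by $w_U$ so that $\bar U\cap H$ merges with $(U\cap H)\bs U$ into $\int_U$, and collapsing the mirabolic/torus integral against the $(N_\Levi\cap H)\bs N_\Levi$ integral by Fourier inversion via the $\GL_3(F)$-invariant functional --- is exactly the intended computation, at the same heuristic level of rigor as the paper itself.
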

We note that $\sm{}{I_3}{I_3}{}\in H\levi(\rr I_3)$, thus $L_W(\sm{}{I_3}{I_3}{}\cdot)=\omega_\pi(\rr)L_W(\cdot)=\omega'_\pi(-1)L_W(\cdot)$.

\subsection{Expression for $\whitformo(M(\frac12)W,e,-\frac12)$.}

By a change of variable
$$\whitformo(W,\cdot,s)=
\int_E\int_E\int_F W_s(\toUbar(\left(\begin{smallmatrix}a &b&c\rr\\0&0&-\c{b}\\0&0&-\c{a}\end{smallmatrix}\right))\gamma\cdot )\psi^{-1}(b)\ dc\ db\ da.$$
By Fourier inversion and \eqref{eq: M12o}, we get
\begin{equation}\label{eq: whitdescn=10}
\whitformo(M(\frac12)W,e,-\frac12)=
\omega'_\pi(-1)\int_{(\alltri \cap H)\bs \alltri}\int_E L_W(u\toU(\lambda_{1,1}(1))\toUbar(\lambda_{1,1}(a))\gamma)\psi_{\alltri}^{-1}(u)\ da\ du.
\end{equation}
Here $\lambda_{1,1}(a):=a\one_{1,1}^3-\c{a}\one_{3,3}^3$; $\alltri$ is a unipotent subgroup of $G$ consisting of matrices of the form
\[
\left(\begin{smallmatrix}1&*&*&&*&*\\&1&*&&&*\\&&1&&&\\ &*&*&1&*&*\\&&*&&1&*\\&&&&&1\end{smallmatrix}\right)
\]
and $\psi_{\alltri}(u)=\psi(u_{1,2}+u_{2,3})$.

We observe that it follows from \eqref{eq: whitdescn=10} that
\begin{claim}\label{claim: central}
The descent $\des(\pi)$ has central character $\omega'_\pi$.
\end{claim}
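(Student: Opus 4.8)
The plan is to read the central character of $\des(\pi)$ off \eqref{eq: whitdescn=10}. Fix $\zeta$ in the centre $Z(G')=\{\zeta I_3:\zeta\c{\zeta}=1\}\cong\unitE$. Recall that $\des(\pi)$ is spanned by the functions $g\mapsto\whitformo(M(\frac12)W,g,-\frac12)$, with $G'$ acting by right translation. Since $\zeta$ is central in $G'$, since $M(\frac12)$ intertwines the $G$-actions (it is holomorphic at $s=\frac12$ by Proposition~\ref{prop: M1/2}), and by \eqref{eq: whitform equivarianceGG},
\[
\whitformo(M(\frac12)W,\zeta g,-\frac12)=\whitformo(M(\frac12)W,g\zeta,-\frac12)=\whitformo\big(M(\frac12)I(\frac12,\zeta)W,g,-\frac12\big).
\]
As the map $W\mapsto\whitformo(M(\frac12)W,\cdot,-\frac12)$ is $G'$-equivariant and $\zeta$ commutes with $G'$, it is enough to prove $\whitformo(M(\frac12)I(\frac12,\zeta)W,e,-\frac12)=\omega'_\pi(\zeta)\,\whitformo(M(\frac12)W,e,-\frac12)$ for every $W$; the statement for general $g$ then follows upon replacing $W$ by $I(\frac12,g)W$.

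From the definition of $L_W$ one has $L_{I(\frac12,\zeta)W}(g)=L_W(g\zeta)$, so, substituting into \eqref{eq: whitdescn=10} applied to $I(\frac12,\zeta)W$, the required identity becomes
\begin{multline*}
\int_{(\alltri\cap H)\bs\alltri}\int_E L_W\big(u\,\toU(\lambda_{1,1}(1))\,\toUbar(\lambda_{1,1}(a))\,\gamma\zeta\big)\psi_{\alltri}^{-1}(u)\,da\,du\\
=\omega'_\pi(\zeta)\int_{(\alltri\cap H)\bs\alltri}\int_E L_W\big(u\,\toU(\lambda_{1,1}(1))\,\toUbar(\lambda_{1,1}(a))\,\gamma\big)\psi_{\alltri}^{-1}(u)\,da\,du.
\end{multline*}
I would write $\gamma\zeta=(\gamma\zeta\gamma^{-1})\gamma$, compute $\gamma\zeta\gamma^{-1}$ explicitly as a matrix in $G$ (a short computation: $\zeta I_3\in G'\subset G$ acts by $\zeta$ on the $3$-dimensional subspace on which $G'$ acts non-trivially and trivially on the vectors fixed by $G'$), and commute it to the left past $u\,\toU(\lambda_{1,1}(1))\,\toUbar(\lambda_{1,1}(a))$. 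The unipotent part of the resulting distortion is absorbed by a measure- and $\psi_{\alltri}$-preserving change of variables in $u\in(\alltri\cap H)\bs\alltri$ and $a\in E$, exactly as in the calculations behind \eqref{eq: whitdescn=10} and ``Claim''~\ref{claim: M12o} (cf.\ the analogues in \cite[\S7]{1401.0198}). What survives on the left is a translate of $L_W$ by an element of $H'\cdot\{\levi(zI_3):z\in E^\times\}$; there $L_W$ transforms by $\omega'_\pi\circ\lambda$ on the $H'$-factor and by $z\mapsto\omega_\pi(z)=\omega'_\pi(z\c{z}^{-1})$ on the torus factor, and, writing $\zeta=z\c{z}^{-1}$ via Hilbert~90 and collecting factors, the net scalar is exactly $\omega'_\pi(\zeta)$.

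The one substantive step is this last piece of matrix bookkeeping: checking that $\gamma\zeta\gamma^{-1}$ really does decompose as (an $H'$- or $\levi$-translation contributing precisely $\omega'_\pi(\zeta)$) times (a unipotent element swallowed by the $(\alltri,E)$-integration, with the characters matching up). Once that is verified the rest is formal, in keeping with the heuristic nature of this section.
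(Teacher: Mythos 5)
Your reduction to $g=e$ via \eqref{eq: whitform equivarianceGG}, the intertwining property of $M(\frac12)$, and the observation $L_{I(\frac12,\zeta)W}(g)=L_W(g\zeta)$ are all fine, and your overall strategy (plug the central element into \eqref{eq: whitdescn=10}, move $\gamma Z(\varepsilon)\gamma^{-1}$ to the left, read off the scalar from the equivariance of $L_W$) is the paper's. But the step you defer as ``matrix bookkeeping'' is the entire content of the proof, and the way you predict it resolves is not correct. Writing the central element as $Z(\varepsilon)=\diag(1,\varepsilon,\sm{\frac{1+\varepsilon}{2}}{\frac{1-\varepsilon}{2}}{\frac{1-\varepsilon}{2}}{\frac{1+\varepsilon}{2}},\varepsilon,1)$ with $\varepsilon\c{\varepsilon}=1$ (it is not $\zeta I_3$ as a $6\times6$ matrix), one finds that $\gamma Z(\varepsilon)\gamma^{-1}$ acts by $\varepsilon$ on $e_1$ and $e_6$, trivially on $e_3,e_4$, and by the symmetric $\frac{1\pm\varepsilon}{2}$ block on the $(2,5)$-plane. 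After commuting it past $\toUbar(\lambda_{1,1}(a))$ (change of variable in $a$) and past $\toU(\lambda_{1,1}(1))$, the leftover element has $(1,4)$-block $\sm{\varepsilon}{1-\varepsilon}{0}{1}$ and $(3,6)$-block $\sm{1}{1-\varepsilon}{0}{\varepsilon}$: its ``unipotent part'' sits in the $(1,4)$ and $(3,6)$ entries, which lie neither in $\alltri$ (whose support omits those positions) nor in the $\toUbar(\lambda_{1,1}(a))$-line you integrate over, nor in $H$ or $H'$ (that would force the relevant parameter into $\rr F$, which fails for general $\varepsilon$). So it cannot be ``swallowed by the $(\alltri,E)$-integration'', and the Hilbert--90 torus factor you anticipate never materializes (note also that $\levi(zI_3)$ already lies in $H'$ with $\lambda=z\c{z}^{-1}$, so appending it buys nothing).

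The paper's proof hinges on a specific device you are missing: it keeps the intermediate form of \eqref{eq: whitdescn=10} with an extra factor $\toUbar(\lambda_{1,1}(-\frac12))$ between $\toU(\lambda_{1,1}(1))$ and $\toUbar(\lambda_{1,1}(a))$ (equivalently, splits the shift in $a$ symmetrically), so that the surviving element is the conjugate ${\bf a}(\varepsilon)=\toU(\lambda_{1,1}(1))\toUbar(\lambda_{1,1}(-\frac12))(\gamma Z(\varepsilon)\gamma^{-1})\toUbar(\lambda_{1,1}(-\frac12))^{-1}\toU(\lambda_{1,1}(1))^{-1}=\sm{\frac{1+\varepsilon}{2}I_3}{\frac{1-\varepsilon}{2}I_3}{\frac{1-\varepsilon}{2}I_3}{\frac{1+\varepsilon}{2}I_3}$. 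One then checks two things: that ${\bf a}(\varepsilon)$ stabilizes the triple $(\alltri,\alltri\cap H,\psi_{\alltri})$, so it can be pulled across $u$ to the far left of the integrand, and that $\diag(I_3,-I_3){\bf a}(\varepsilon)=\varepsilon\,\c{{\bf a}(\varepsilon)}\,\diag(I_3,-I_3)$, i.e.\ ${\bf a}(\varepsilon)\in H'$ with $\lambda({\bf a}(\varepsilon))=\varepsilon$, whence $L_W({\bf a}(\varepsilon)\cdot)=\omega'_\pi(\varepsilon)L_W(\cdot)$ directly, with no torus factor and no Hilbert 90; a final change of variable in $a$ finishes. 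Until you exhibit this (or an equivalent) decomposition and verify the stabilization of $(\alltri,\alltri\cap H,\psi_{\alltri})$, your argument has a genuine gap precisely at its only substantive step.
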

\begin{proof}
A central element in $G'$ has the form $Z(\varepsilon)=\diag(1,\varepsilon,\sm{\frac{1+\varepsilon}{2}}{\frac{1-\varepsilon}{2}}{\frac{1-\varepsilon}{2}}{\frac{1+\varepsilon}{2}},\varepsilon,1)$ where $\varepsilon\c{\varepsilon}=1$.
It is clear that $\gamma Z(\varepsilon)\gamma^{-1}$ stabilizes the group $\{\toUbar(\lambda_{1,1}(a))\}$. Thus
\begin{multline*}
\whitformo(M(\frac12)W,Z(\varepsilon),-\frac12)=
\omega'_\pi(-1)\int_{(\alltri \cap H)\bs \alltri}\int_E\\
L_W(u\toU(\lambda_{1,1}(1))\toUbar(\lambda_{1,1}(-\frac12))(\gamma Z(\varepsilon)\gamma^{-1})\toUbar(\lambda_{1,1}(a))\gamma)\psi_{\alltri}^{-1}(u)\ da\ du.
\end{multline*}
Next let
\[{\bf a}=
{\bf a}(\varepsilon):=\toU(\lambda_{1,1}(1))\toUbar(\lambda_{1,1}(-\frac12))(\gamma Z(\varepsilon)\gamma^{-1})
\toUbar(\lambda_{1,1}(-\frac12))^{-1}\toU(\lambda_{1,1}(1))^{-1}=
\sm{\frac{1+\varepsilon}{2}I_3}{\frac{1-\varepsilon}{2}I_3}{\frac{1-\varepsilon}{2}I_3}{\frac{1+\varepsilon}{2}I_3}.
\]
Observe ${\bf a}\c{{\bf a}}=I_6$. We can check that  ${\bf a}$ stabilizes $(\alltri,\alltri\cap H,\psi_{\alltri})$. Thus we get:
\begin{multline*}\whitformo(M(\frac12)W,Z(\varepsilon),-\frac12)=\omega'_\pi(-1)\int_{(\alltri \cap H)\bs \alltri}\int_E \\
L_W({\bf a} u \toU(\lambda_{1,1}(1))\toUbar(\lambda_{1,1}(-\frac12))\toUbar(\lambda_{1,1}(a))\gamma)\psi_{\alltri}^{-1}(u)\ da\ du.
\end{multline*}
Now ${\bf a}=\diag(I_3,-I_3)\c{{\bf a}}\diag(I_3,-I_3)\varepsilon$, thus $L_W({\bf a}\cdot)=\omega'_\pi(\varepsilon)L_W(\cdot)$. A change of variable in $a$ gives the claim.
\end{proof}

\subsection{Application of a function equation}
Now we consider the left hand side of \eqref{eq: unomain}. Let $W'=\whitformod(M(\frac12)\alt{W},-\frac12)$ with $\alt{W}\in \Ind(\WhitMLd(\c{\pi}))$.
Using Bruhat decomposition, and the fact that the central character of $\desinv(c(\pi))$ is $(\omega'_\pi)^{-1}$, we get that the left hand side of \eqref{eq: unomain} is:
\[
\int_{E^{\times}}\int_{\unitE}\abs{t}^{-4}I^{\psi^{-1}}(M(\frac12)\alt{W}, t,1)I^{\psi}(W,t,z)(\omega'_\pi)^{-1}(z)\ dz\ dt
\]
where $\unitE$ is the group of norm $1$ elements in $E$ and
\[
I^{\psi}(W,t,z)=\int_{N'}\int_{V_\gamma\bs V}W(\diag(t,I_4,t^*)\gamma v w' u Z(z))\psi_V^{-1}(v)\ dv\ \psi_{N'}^{-1}(u)\ du.
\]
Here $w'$ is the Weyl element $\diag(1,w_4,1)\diag(I_2,w_2,I_2)$. We now fix a section of $V_\gamma\bs V$ to be the set of
\[
v(x,y,d):=\levi(\left(\begin{smallmatrix}1& x&y \\ &1 &0 \\ & &1\end{smallmatrix}\right))
\toU(\left(\begin{smallmatrix}0& 0&d\rr \\0 & 0&0 \\0 &0 &0\end{smallmatrix}\right)).
\]
The integral in $t$ (with the rest of the variables fixed)
has the form
\[
\int_{E^\times} W^1(\diag(t,1,1))W^2(\diag(t,1,1))\abs{t}^{-1}\ dt
\]
where $W^1\in\WhitM(\pi)$ and $W^2\in\WhitMd(\d\pi)$. We can use the function equation (split version) \cite[Theorem~1.3]{MR3220931}
and rewrite the left hand side of \eqref{eq: unomain} as:
\[
\int_{E^{\times}}\int_{\unitE}\abs{t}^{-2}I_0^{\psi^{-1}}(M(\frac12)\alt{W}, t,1)I_0^{\psi}(W,t,z)(\omega'_\pi)^{-1}(z)\ dz\ dt
\]
where
\begin{multline*}
I^{\psi}_0(W,t,z)=\int_{N'}\iiint_E\int_F W(\levi(
\left(\begin{smallmatrix}1& & \\ &1 & \\ & &t\end{smallmatrix}\right)
\left(\begin{smallmatrix}1& & \\ &1 & \\ &r&1\end{smallmatrix}\right)
\left(\begin{smallmatrix} &1& \\&&1\\1&&\end{smallmatrix}\right))
\gamma v(x,y,d) w'u Z(z))\\\psi_V^{-1}(v(x,y,d))\psi_{N'}^{-1}(u)\ dd\ dr\ dx\ dy\ du.
\end{multline*}
Using the equivariance of Whittaker functions, we may replace $I^{\psi}_0$ in the above by $I^{\psi}_b$ where $b$ is an element of the form
$\diag(1,\sm{1}{*}{}{1}\sm{1}{}{*}{1})$:
\begin{multline*}
I^{\psi}_b(W,t,z)=\int_{N'}\iiint_E\int_F W(\levi(
\left(\begin{smallmatrix}1& & \\ &1 & \\ & &t\end{smallmatrix}\right)b
\left(\begin{smallmatrix}1& & \\ &1 & \\ &r&1\end{smallmatrix}\right)
\left(\begin{smallmatrix} &1& \\&&1\\1&&\end{smallmatrix}\right))
\gamma v(x,y,d) w' u Z(z))\\\psi_V^{-1}(v(x,y,d))\psi_{N'}^{-1}(u)\ dd\ dr\ dx\ dy\ du.
\end{multline*}

We have shown:
\begin{claim}
The left hand side of \eqref{eq: unomain} equals:
$$\int_{E^{\times}}\int_{\unitE}\abs{t}^{-2}I_b^{\psi^{-1}}(M(\frac12)\alt{W}, t,1)I_b^{\psi}(W,t,z)(\omega'_\pi)^{-1}(z)\ dz\ dt$$
\end{claim}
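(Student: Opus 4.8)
The plan is to combine into a single identity the three rewritings of the left-hand side of \eqref{eq: unomain} carried out in the preceding paragraphs. First I would start from $\stint_{N'}J(\sigma'(u)W',W,\frac12)\psi_{N'}(u)\,du$ with $\sigma'=\desinv(\c{\pi})$ and $W'=\whitformod(M(\frac12)\alt{W},-\frac12)$, unfold the definition \eqref{eq: localinneruno} of $J$ together with the definition \eqref{unoeq: defwhitform} of $\whitformo$ (applied to $M(\frac12)\alt{W}$), and parametrize $N'\bs G'$ through the Bruhat decomposition of $G'\simeq\U_3$. The relevant diagonal torus of $\U_3$ is a product of $\{\diag(t,\dots)\}$, $t\in E^\times$, with the center $\{Z(z):z\in\unitE\}$; by ``Claim''~\ref{claim: central} the representation $\desinv(\c{\pi})$ has central character $(\omega'_\pi)^{-1}$, so the $\unitE$-integration is pulled out against $(\omega'_\pi)^{-1}(z)$. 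Keeping track of the modulus characters and of the characters $\psi_{N'}$ and $\psi_V$ under this decomposition then produces the expression
\[
\int_{E^{\times}}\int_{\unitE}\abs{t}^{-4}\,I^{\psi^{-1}}(M(\tfrac12)\alt{W},t,1)\,I^{\psi}(W,t,z)\,(\omega'_\pi)^{-1}(z)\ dz\ dt
\]
with $I^\psi$ as defined above.

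Next I would isolate the integral over $t\in E^\times$, keeping the remaining variables fixed. Since for every $g$ the function $m\mapsto W_s(\levi(m)g)\abs{\det m}^{-(s+1)}$ lies in $\WhitM(\pi)$, and likewise on the $\c{\pi}$-side, this inner integral has the form $\int_{E^\times}W^1(\diag(t,1,1))\,W^2(\diag(t,1,1))\,\abs{t}^{-1}\,dt$ with $W^1\in\WhitM(\pi)$ and $W^2\in\WhitMd(\d{\pi})$, i.e.\ a Rankin--Selberg $\GL_3\times\GL_1$ integral at the relevant point. Applying the split local functional equation \cite[Theorem~1.3]{MR3220931} — which replaces $W^1,W^2$ by suitable Weyl-translates and performs a Fourier transform in an auxiliary variable — rewrites this as an integral over a larger unipotent subgroup, thereby introducing the extra integration variable $r$ and the Weyl element $\left(\begin{smallmatrix}&1&\\&&1\\1&&\end{smallmatrix}\right)$ occurring in $I_0^\psi$ and shifting the power of $\abs{t}$ from $-4$ to $-2$; this converts $I^\psi$ into $I_0^\psi$. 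Finally, using the equivariance properties of $W_s$ together with a change of variables in the unipotent integrations over $v(x,y,d)$, $u$ and $r$, one checks that inserting an element $b=\diag(1,\sm{1}{*}{}{1}\sm{1}{}{*}{1})$ between $\diag(1,1,t)$ and the subsequent factors in $I_0^\psi$ leaves the integral unchanged for a suitably chosen $b$, so that $I_0^\psi$ may be replaced by $I_b^\psi$. Combining these three rewritings gives the claim.

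The conceptual content here is light, and I expect the real work to be the bookkeeping. One has to verify that the Weyl-element conjugations and all the unipotent characters involved — $\psi_{N'}$, $\psi_V$ and $\psi_N$ — match up consistently through the Bruhat decomposition and the final change of variables, that the Haar measures are normalized compatibly throughout, and that $b$ can indeed be taken of the stated form with no residual character surviving the equivariance step. As everywhere in this section the manipulations are formal, so convergence is not at issue; what does require care is the precise shape of the split functional equation of \cite{MR3220931} in the normalization used here, and the verification that the auxiliary variable it produces coincides with the variable $r$ appearing in $I_0^\psi$.
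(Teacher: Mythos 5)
Your proposal matches the paper's own argument step for step: the paper obtains this claim exactly by (i) unfolding the stable integral via the Bruhat decomposition of $\U_3$ and the central character of $\desinv(\c{\pi})$ to reach the $\abs{t}^{-4}$ expression with $I^{\psi}$, (ii) applying the split functional equation of \cite[Theorem~1.3]{MR3220931} to the inner $t$-integral of a pair of Whittaker functions to pass to $I_0^{\psi}$ with $\abs{t}^{-2}$, and (iii) using Whittaker equivariance to insert $b$ and replace $I_0^{\psi}$ by $I_b^{\psi}$. So the proposal is correct (at the formal level at which this section operates) and takes essentially the same route as the paper.
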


To prove the identity $c_\pi=\omega'_\pi(-1)$, we are left to show:
\begin{claim}
When $b=\diag(1,\sm{1}{-1}{}{1})$,
\begin{equation}\label{eq: mainsplito}
  \int_{E^{\times}}\phi(t) I^{\psi}_b(M(\frac12)W,t,1)\ dt=\int_{F^{\times}}
 \abs{t}^{\frac12}\phi(t\rr)\ dt \times \whitformo(M(\frac12)W,e,-\frac12),
\end{equation}
\begin{equation}\label{eq: mainfactoro}
\int_{F^{\times}}\abs{t}^{-\frac32}\int_{\unitE} I_b^{\psi}(W,t\rr,z)(\omega'_\pi)^{-1}(z)\ dz \ dt=
\omega'_\pi(-1)\whitformo(M(\frac12)W,e,-\frac12).
\end{equation}
\end{claim}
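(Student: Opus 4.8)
The plan is to establish both \eqref{eq: mainsplito} and \eqref{eq: mainfactoro} by the unfolding method already used for the even case in \S\ref{sec: n=1} and, in the metaplectic setting, in \cite[``Claims'' 7.5 and 7.6]{1401.0198}; throughout, all integrals are manipulated formally as if absolutely convergent. The two ingredients are ``Claim''~\ref{claim: M12o}, which writes $M^*W(g)=\int_{(N\cap H)\bs N}L_W(\sm{}{I_3}{I_3}{}ug)\psi_N^{-1}(u)\,du$, and formula \eqref{eq: whitdescn=10}, which expresses $\whitformo(M(\frac12)W,e,-\frac12)$ as $\omega'_\pi(-1)$ times the integral $\int_{(\alltri\cap H)\bs\alltri}\int_E L_W(u\,\toU(\lambda_{1,1}(1))\,\toUbar(\lambda_{1,1}(a))\,\gamma)\psi_\alltri^{-1}(u)\,da\,du$.

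For \eqref{eq: mainsplito} the first step is to write out $I^\psi_b(M(\frac12)W,t,1)$ with $b=\diag(1,\sm{1}{-1}{}{1})$ and substitute ``Claim''~\ref{claim: M12o} for the value of $M^*W$ at each argument; this turns the left-hand side into an iterated integral of $L_W$ over $(N\cap H)\bs N$, over $N'$, over the section $v(x,y,d)$ of $V_\gamma\bs V$ and over the auxiliary variable $r$. One then uses the left $H$-invariance of $L_W$ to absorb $(N\cap H)\bs N$, conjugates the Weyl elements $\gamma$, $w'$ and $\sm{}{I_3}{I_3}{}$ past the unipotent variables, and performs a sequence of changes of variables and Fourier inversions; this should reorganize the unipotent integrations into the integral appearing in \eqref{eq: whitdescn=10}, while a Fourier inversion restricts $t$ to $F^\times\rr$ and splits off the weight $\abs{t}^{\frac12}$. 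It then remains to check that the various factors of $\omega'_\pi(\pm1)$ that enter — in particular the factor $\omega_\pi(\rr)=\omega'_\pi(-1)$ carried by $\sm{}{I_3}{I_3}{}\in H\levi(\rr I_3)$ and the one in $L_W(\diag(z,z,z,\c{z}^{-1},\c{z}^{-1},\c{z}^{-1})(\cdot))=\omega_\pi(z)L_W(\cdot)$ — combine to reproduce exactly the prefactor of $\whitformo(M(\frac12)W,e,-\frac12)$ in \eqref{eq: whitdescn=10}; granting this, the left-hand side equals $\int_{F^\times}\abs{t}^{\frac12}\phi(t\rr)\,dt\cdot\whitformo(M(\frac12)W,e,-\frac12)$.

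For \eqref{eq: mainfactoro}, which involves $W_{\frac12}$ in place of $M^*W$, the plan is to unfold $I^\psi_b(W,t\rr,z)$ directly and to observe that the outer integrations $\int_{F^\times}\abs{t}^{-\frac32}\,dt$ and $\int_{\unitE}(\omega'_\pi)^{-1}(z)\,dz$, together with the internal integrations over $r$, $x$, $y$, $d$, reassemble the defining integral of $L_W$ — its two torus integrations with weights $\abs{t_1}^{-\frac52}$, $\abs{t_2}^{-\frac32}$ and its remaining unipotent integrations being recovered by recombining $t$ and $z$ with the entries of $v(x,y,d)$ and $r$ — in such a way that the left-hand side becomes the $L_W$-integral appearing in \eqref{eq: whitdescn=10}. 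Since by \eqref{eq: whitdescn=10} that integral equals $\omega'_\pi(-1)^{-1}\whitformo(M(\frac12)W,e,-\frac12)=\omega'_\pi(-1)\whitformo(M(\frac12)W,e,-\frac12)$ (using $\omega'_\pi(-1)^2=1$), this yields the right-hand side of \eqref{eq: mainfactoro}.

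The main difficulty in both parts is entirely organizational: tracking the numerous unipotent variables and the conjugations by $\gamma$, $w'$ and $\sm{}{I_3}{I_3}{}$; checking that the characters $\psi_V$, $\psi_{N'}$, $\psi_N$ and $\psi_\alltri$ remain compatible after each substitution; verifying that all the $\omega'_\pi(\pm1)$ factors entering the unfolding balance; and keeping the powers of $\abs{t}$ (and, in the ramified or split cases, of $\abs{\rr}$) straight. These verifications run completely parallel to those in \cite[\S7]{1401.0198}, to which I would refer for the details rather than reproduce them here.
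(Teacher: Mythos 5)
Your plan for \eqref{eq: mainsplito} is essentially the route the paper takes: write out $I^{\psi}_b$ explicitly, substitute ``Claim''~\ref{claim: M12o} via Fourier inversion, conjugate by the Weyl elements (in the paper this is organized through $\few$ and $\few'$ and the identities \eqref{eq: weyl}), and compare with \eqref{eq: whitdescn=10}; a further Fourier inversion indeed restricts $t$ to $F^\times\rr$ with the weight $\abs{t}^{\frac12}$. Although this is only a plan, it is the correct plan.

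For \eqref{eq: mainfactoro}, however, there is a genuine gap: the mechanism you propose is not the one that works, and the step you defer to \cite[\S7]{1401.0198} has no analogue there. You claim that the outer integrations $\int_{F^\times}\abs{t}^{-\frac32}dt$ and $\int_{\unitE}(\omega'_\pi)^{-1}(z)\,dz$, together with the unipotent variables, directly ``reassemble the defining integral of $L_W$'', with the two torus integrations (weights $\abs{t_1}^{-\frac52},\abs{t_2}^{-\frac32}$) recovered by recombining $t$ and $z$. This cannot be right: $\unitE$ is the compact norm-one torus, so the single noncompact variable $t$ and the compact variable $z$ cannot produce two independent $F^\times$-integrations with those weights. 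In the paper the two torus integrals come from a different source altogether: one introduces the intermediate $\GL_3(F)$-invariant functional $L'_W$ (which already contains the $t_1,t_2,x$ integrations, by the distinction of $\pi$) and recovers $W$ from $L'_W$ by the Fourier-inversion identity \eqref{eq: leviinv}; the outer $\abs{t}^{-\frac32}$-integral is then converted by Fourier inversion into integrations over new unipotent variables $\alpha,\beta\in F\bs E$ (starting from \eqref{eq: defIpsi}). The decisive step — which your proposal omits entirely — is that $\{\few^{-1}{\bf a}(z)\few\}\ltimes(\Ne\cap H)$ is an open dense section of $(H'\cap P)\bs H'$, with ${\bf a}(z)$ the element from the proof of ``Claim''~\ref{claim: central} stabilizing $(\alltri,\alltri\cap H,\psi_\alltri)$; by \eqref{eq: inflate2} this is exactly what absorbs the $\unitE$-integration against $(\omega'_\pi)^{-1}$ and upgrades $L'_W$ to the $H'$-equivariant form $L_W$, and it is also where the constant $\omega'_\pi(-1)$ is produced. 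Since the metaplectic computation in \cite[\S7]{1401.0198} has no $\unitE$/$H'$-versus-$H$ phenomenon (the analogue \eqref{one: factor} there involves no compact-torus integral), ``running parallel to'' that reference does not supply this step; as written, your argument for \eqref{eq: mainfactoro} would not go through.
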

\subsection{Proof of \eqref{eq: mainsplito}}

Write an element $u\in N'$ as
\[
\levi(\left(\begin{smallmatrix}1& & \\ &1 &s \\ & &1\end{smallmatrix}\right))
\toU(\left(\begin{smallmatrix}0& 0& 0\\ -s&c\rr & 0\\ 0&\c{s}&0\end{smallmatrix}\right)).
\]
Then explicitly
\begin{multline*}
I^{\psi}_b(W,t,z)=\iint_{E^4}\iint_{F^2} W(\levi(
\left(\begin{smallmatrix}1& & \\ &1 & \\ & &t\end{smallmatrix}\right)b
\left(\begin{smallmatrix}1& & \\ &1 & \\ &r&1\end{smallmatrix}\right))
\toUbar(\left(\begin{smallmatrix}s& -\c{x}& c\rr\\ y&d\rr & x\\ 0&-\c{y}&-\c{s}\end{smallmatrix}\right))
\\
\levi(\left(\begin{smallmatrix} 1&&\c{s} \\&1&\\&&1\end{smallmatrix}\right)
\left(\begin{smallmatrix} &1& \\&&1\\1&&\end{smallmatrix}\right))
\gamma w' Z(z))
\psi^{-1}(y)\psi^{-1}(s)\ dd\ dc\ dr\ dx\ dy\ ds.
\end{multline*}
Now let $b=\diag(1,\sm{1}{-1}{}{1})$. After a change of variables and using the equivariance of $W$, we can simplify the above as:
\begin{multline*}
I^{\psi}_b(W,t,z)=\iint_{E^4}\iint_{F^2} W(\levi(
\left(\begin{smallmatrix}1& & \\ &1 & \\ & &t\end{smallmatrix}\right)b)
\toUbar(\left(\begin{smallmatrix}s& -\c{x}& c\rr\\ y&d\rr & x\\ 0&-\c{y}&-\c{s}\end{smallmatrix}\right))
\\
\levi(\left(\begin{smallmatrix}1& & \\ &1 & \\ &r&1\end{smallmatrix}\right)
\left(\begin{smallmatrix} &1& \\&&1\\1&&\end{smallmatrix}\right))
\gamma w' Z(z))
\psi^{-1}(y)\psi^{-1}(s)\ dd\ dc\ dr\ dx\ dy\ ds.
\end{multline*}
Next conjugate $b$ and a further change of variables give:
\begin{multline}\label{eq: defIpsi}
I^{\psi}_b(W,t,z)=\iint_{E^4}\iint_{F^2} W(\levi(
\left(\begin{smallmatrix}1& & \\ &1 & \\ & &t\end{smallmatrix}\right))
\toUbar(\left(\begin{smallmatrix}s& -\c{x}& c\rr\\ y&d\rr & x\\ 0&-\c{y}&-\c{s}\end{smallmatrix}\right))
\\
\levi(b\left(\begin{smallmatrix}1& & \\ &1 & \\ &r&1\end{smallmatrix}\right)
\left(\begin{smallmatrix} &1& \\&&1\\1&&\end{smallmatrix}\right))
\gamma w' Z(z))
\psi^{-1}(s)\ dd\ dc\ dr\ dx\ dy\ ds.
\end{multline}

By Fourier inversion and \eqref{eq: M12o}, we get
\begin{multline*}
I^{\psi}_b(M(\frac12)W,t,1)=\omega'_\pi(-1)\abs{t}^3\iint_{E^3}\int_{{\Nc\cap H}\bs \Nc} L_W(u
\toUbar(\left(\begin{smallmatrix}s& 0& 0\\ y&0 & 0\\ 0&-\c{y}&-\c{s}\end{smallmatrix}\right))
\\\levi(\left(\begin{smallmatrix}1& & \\ &1 & \\ & &t\end{smallmatrix}\right)
b\left(\begin{smallmatrix}1& & \\ &1 & \\ &r&1\end{smallmatrix}\right)
\left(\begin{smallmatrix} &1& \\&&1\\1&&\end{smallmatrix}\right))\gamma w' )
\psi^{-1}(s\c{t})\psi_{\Nc}^{-1}(u)\ du\ dr\ dy\ ds.
\end{multline*}
Here $\Nc=\levi(N_\GLnn)\ltimes \{\bar u=\toUbar(\left(\begin{smallmatrix}0& *& *\\ 0&* & *\\ 0&0&0\end{smallmatrix}\right))\}$
with $\psi_{\Nc}(u\bar u)=\psi_N(u)$. Further Fourier inversion gives:
\begin{multline}\label{eq: splito}
\int_{E^{\times}}\phi(t) I^{\psi}_b(M(\frac12)W,t,1)\ dt=\omega'_\pi(-1)\int_{F^{\times}}
\abs{t}^{\frac12}\phi(t\rr)\ dt \times \\\int_{E}\int_{{\Ne\cap H}\bs \Ne}
L_W(u\levi(\left(\begin{smallmatrix}1& & \\ &1 & \\ & &\rr\end{smallmatrix}\right)
b\left(\begin{smallmatrix}1& & \\ &1 & \\ &r&1\end{smallmatrix}\right)
\left(\begin{smallmatrix} &1& \\&&1\\1&&\end{smallmatrix}\right))\gamma w' )
\psi_{\Ne}^{-1}(u)\ du\ dr.
\end{multline}
Here $\Ne=\{\levi(\left(\begin{smallmatrix}1& *& *\\0 &1& 0\\0 &0&1\end{smallmatrix}\right))\}\ltimes \bar U_\circ$ where
$\bar U_\circ=\{\toUbar(\left(\begin{smallmatrix}*& *& *\\ *&* & *\\ 0&*&*\end{smallmatrix}\right))\}$,
and $\psi_{\Ne}(u)=\psi(u_{1,2}-u_{4,1}\rr)$.

Next consider $\few=\levi(\left(\begin{smallmatrix}& & 1\\ 1&& \\ &1&\end{smallmatrix}\right))\diag(I_2,\sm{}{-\rr}{\rr^{-1}}{},I_2)$.
It is easy to see $\few\in H$, thus $L_W(\few\cdot)=L_W(\cdot)$.
Notice that $\few \Ne\few^{-1}=\alltri$ and $\psi_{\alltri}(\few u\few^{-1})=\psi_{\Ne}(u)$ for $u\in \Ne$. Let
$$\few'=\few \levi(\diag(1,1,\rr))=\levi(\left(\begin{smallmatrix}& & 1\\ 1&& \\ &1&\end{smallmatrix}\right))\diag(I_2,\sm{}{1}{1}{},I_2).$$
Then we have
\begin{equation}\label{eq: weyl}
\few' \levi(b)(\few')^{-1}=\toU(\lambda_{1,1}(1)),\few'\levi(\left(\begin{smallmatrix}1& & \\ &1 & \\ &r&1\end{smallmatrix}\right))(\few')^{-1}=
\toUbar(\lambda_{1,1}(-r)), \,\,\few'\levi(\left(\begin{smallmatrix} &1& \\&&1\\1&&\end{smallmatrix}\right))
\gamma w'=\gamma.
\end{equation}
Thus we get \eqref{eq: mainsplito} from \eqref{eq: splito} and \eqref{eq: whitdescn=10}.

\subsection{Proof of \eqref{eq: mainfactoro}}

Next we consider:
$$I':=\int_{F^{\times}}\abs{t}^{-\frac32}\int_{\unitE} I_b^{\psi}(W,t\rr,z)(\omega'_\pi)^{-1}(z)\ dz \ dt.$$
Let $$L'_W(g):=\int_{F^\times}\int_{F^\times}\int_F W_\frac12 (\levi(
\left(\begin{smallmatrix}t_1&&\\t_2 x&t_2&\\&&1\end{smallmatrix}\right))g )\ dx
\abs{t_1}^{-\frac52}\abs{t_2}^{-\frac32}\ dt_2\ dt_1.
$$
Then since $\pi$ is $\GL_3(F)$ distinguished, we have $L'_W(\levi(h)\cdot)=\abs{\det h}^2 L'_W(\cdot)$ for $h\in \GL_3(F)$.
Moreover
\begin{equation}\label{eq: leviinv}
 W(g)=\int_{(N_\GLnn\cap\GL_3(F))\bs N_\GLnn}L'_W(ug)\psi^{-1}_{N_\GLnn}(u)\ du.
\end{equation}
From this, \eqref{eq: defIpsi}  we get
\begin{multline*}
I'=\int_{F^{\times}}\abs{t}^{-\frac32}\int_{\unitE}
\iint_{E^4}\iint_{F^2} \int_{(N_\GLnn\cap\GL_3(F))\bs N_\GLnn} L'_W(u
\levi(\left(\begin{smallmatrix}1& & \\ &1 & \\ & &t\rr\end{smallmatrix}\right))
\toUbar(\left(\begin{smallmatrix}s& -\c{x}& c\rr\\ y&d\rr & x\\ 0&-\c{y}&-\c{s}\end{smallmatrix}\right))\times
\\
\levi(b\left(\begin{smallmatrix}1& & \\ &1 & \\ &r&1\end{smallmatrix}\right)
\left(\begin{smallmatrix} &1& \\&&1\\1&&\end{smallmatrix}\right))
\gamma w' Z(z))
\psi^{-1}(s)\psi^{-1}_{N_\GLnn}(u)\ du\ dd\ dc\ dr\ dx\ dy\ ds (\omega'_\pi)^{-1}(z)\ dz \ dt.
\end{multline*}
By Fourier inversion this is:
\begin{multline*}
I'=\int_{\unitE}
\iint_{E^4}\iint_{F^2}\iint_{(F\bs E)^2} L'_W(
\levi(\left(\begin{smallmatrix}1& \alpha & \beta \rr\\ &1 & \\ & &\rr\end{smallmatrix}\right))
\toUbar(\left(\begin{smallmatrix}s& -\c{x}& c\rr\\ y&d\rr & x\\ 0&-\c{y}&-\c{s}\end{smallmatrix}\right))\times
\\
\levi(b\left(\begin{smallmatrix}1& & \\ &1 & \\ &r&1\end{smallmatrix}\right)
\left(\begin{smallmatrix} &1& \\&&1\\1&&\end{smallmatrix}\right))
\gamma w' Z(z))
\psi^{-1}(s)\psi^{-1}(\alpha)\ d\alpha\ d\beta\ dd\ dc\ dr\ dx\ dy\ ds (\omega'_\pi)^{-1}(z)\ dz.
\end{multline*}

Using \eqref{eq: weyl}, we can rewrite the above as:
\begin{multline*}
I'=\int_{\unitE}
\iint_{E^4}\iint_{F^2}\iint_{(F\bs E)^2} L'_W(
\levi(\left(\begin{smallmatrix}1& \alpha & \beta \rr\\ &1 & \\ & &\rr\end{smallmatrix}\right))
\toUbar(\left(\begin{smallmatrix}s& -\c{x}& c\rr\\ y&d\rr & x\\ 0&-\c{y}&-\c{s}\end{smallmatrix}\right))\times
\\
(\few')^{-1}
\toU(\lambda_{1,1}(1))\toUbar(\lambda_{1,1}(r))\gamma  Z(z))
\psi^{-1}(s)\psi^{-1}(\alpha)\ d\alpha\ d\beta\ dd\ dc\ dr\ dx\ dy\ ds (\omega'_\pi)^{-1}(z)\ dz.
\end{multline*}
Using the element ${\bf a}(z)$ defined in the proof of ``Claim" \ref{claim: central}, this is
\begin{multline*}
I'=\int_{\unitE}
\iint_{E^4}\iint_{F^2}\iint_{(F\bs E)^2} L'_W(
\levi(\left(\begin{smallmatrix}1& \alpha & \beta \rr\\ &1 & \\ & &\rr\end{smallmatrix}\right))
\toUbar(\left(\begin{smallmatrix}s& -\c{x}& c\rr\\ y&d\rr & x\\ 0&-\c{y}&-\c{s}\end{smallmatrix}\right))\times
\\
(\few')^{-1}{\bf a}(z)
\toU(\lambda_{1,1}(1))\toUbar(\lambda_{1,1}(r))\gamma  )
\psi^{-1}(s)\psi^{-1}(\alpha)\ d\alpha\ d\beta\ dd\ dc\ dr\ dx\ dy\ ds (\omega'_\pi)^{-1}(z)\ dz.
\end{multline*}
Notice that $\levi(\left(\begin{smallmatrix}1& \alpha & \beta  \\ &1 & \\ & & 1\end{smallmatrix}\right))
\toUbar(\left(\begin{smallmatrix}s& -\c{x}& c\rr\\ y&d\rr & x\\ 0&-\c{y}&-\c{s}\end{smallmatrix}\right))
\in \Ne$ and $\Ne$ is conjugate to $\alltri$ by $\few$.
Since ${\bf a}(z)$ stabilizes $(\alltri, \alltri\cap H, \psi_{\alltri})$, we conclude that after a change of variables the above is:
\begin{multline*}
I'=\int_{\unitE}
\iint_{E^4}\iint_{F^2} \iint_{(F\bs E)^2} L'_W(\few^{-1}{\bf a}(z)\few
\levi(\left(\begin{smallmatrix}1& \alpha & \beta \\ &1 & \\ & &1\end{smallmatrix}\right))
\toUbar(\left(\begin{smallmatrix}s& -\c{x}& c\rr\\ y&d\rr & x\\ 0&-\c{y}&-\c{s}\end{smallmatrix}\right))\times
\\
\few^{-1}\toU(\lambda_{1,1}(1))\toUbar(\lambda_{1,1}(r))\gamma  )
\psi^{-1}(s)\psi^{-1}(-\rr\alpha)\ d\alpha\ d\beta\ dd\ dc\ dr\ dx\ dy\ ds (\omega'_\pi)^{-1}(z)\ dz .
\end{multline*}
The group $\{\few^{-1}{\bf a}(z)\few\}\ltimes (\Ne\cap H)$ is an open dense section of $(H'\cap P)\bs H'$.
Thus by \eqref{eq: inflate2}, the above is
\[
I'=  \int_{(\Ne\cap H)\bs \Ne} L_W(u\few^{-1}
\toU(\lambda_{1,1}(1))\toUbar(\lambda_{1,1}(r))\gamma  )
\psi_{\Ne}^{-1}(u)\ du.
\]
Since $\few\in H$ and for $u\in \Ne$,  $\few u\few^{-1}\in \alltri$ with $\psi_{\alltri}(\few u\few^{-1})=\psi_{\Ne}(u)$, we get from comparing with
\eqref{eq: whitdescn=10}
\[
I'=\omega'_\pi(-1)\whitformo(M(\frac12)W,e,-\frac12).
\]
This gives \eqref{eq: mainfactoro} and thus concludes the heuristic argument for our local conjecture $c_\pi=\omega'_\pi(-1)$ in the case of $G'=\U_3$.

\def\cprime{$'$}
\providecommand{\bysame}{\leavevmode\hbox to3em{\hrulefill}\thinspace}
\providecommand{\MR}{\relax\ifhmode\unskip\space\fi MR }
\providecommand{\MRhref}[2]{%
  \href{http://www.ams.org/mathscinet-getitem?mr=#1}{#2}
}
\providecommand{\href}[2]{#2}

\end{document}